\newtheorem{thm}{Theorem}[section]
\newtheorem{cor}[thm]{Corollary}
\newtheorem{lem}[thm]{Lemma}
\newtheorem{prop}[thm]{Proposition}
\theoremstyle{definition}
\newtheorem{defn}[thm]{Definition}
\theoremstyle{remark}
\newtheorem{rem}[thm]{Remark}
\numberwithin{equation}{section}
\newcommand{\Rd}{\mathbb{R}^{d}}
\newcommand{\N}{\mathbb N}
\begin{document}

\title[Balanced frames]{Balanced frames: a useful tool in signal processing with good properties}

\author[Sigrid B. Heineken]{Sigrid B. Heineken$^{1}$}%
%{Sigrid B. Heineken\\Departamento de Matem\'atica \\Facultad de Ciencias Exactas y Naturales\\
%Universidad de Buenos Aires\\ Pabell\'on I\\ Pabell\'on I\\
%C1428EGA C.A.B.A.\\ Argentina\\ IMAS, UBA-CONICET\\Argentina.}
%\email{sheinek@dm.uba.ar}%

\author[Patricia M. Morillas]{Patricia M. Morillas$^{2,*}$}

\author[Pablo Tarazaga]{Pablo Tarazaga$^{2,3}$\\\\ $^{1}$\textit{D\lowercase{epartamento de} M\lowercase{atem\'atica}, FCE\lowercase{y}N, U\lowercase{niversidad de }B\lowercase{uenos} A\lowercase{ires}, P\lowercase{abell\'on} I, C\lowercase{iudad }U\lowercase{niversitaria}, IMAS, UBA-CONICET, C1428EGA C.A.B.A., A\lowercase{rgentina}\\ $^2$ I\lowercase{nstituto de }M\lowercase{atem\'{a}tica }A\lowercase{plicada} S\lowercase{an} L\lowercase{uis, }UNSL-CONICET, E\lowercase{j\'{e}rcito de los }A\lowercase{ndes 950, 5700 }S\lowercase{an} L\lowercase{uis,} A\lowercase{rgentina}\\ $^3$ D\lowercase{epartamento de} M\lowercase{atem\'{a}tica}, FCFM\lowercase{y}N, UNSL, E\lowercase{j\'{e}rcito de los }A\lowercase{ndes 950, 5700 }S\lowercase{an} L\lowercase{uis,} A\lowercase{rgentina}}}

\thanks{* Corresponding author. E-mail address: morillas.unsl@gmail.com\\
\textit{E-mail addresses:}  sheinek@dm.uba.ar (S. B. Heineken),
morillas.unsl@gmail.com (P. M. Morillas), patarazaga@hotmail.com (P.
Tarazaga).}
%% ---------------------------------------------------------------

\maketitle

\begin{abstract}

So far there has not been paid attention to frames that are
balanced, i.e. those frames which sum is zero. In this paper we
consider balanced frames, and in particular balanced unit norm tight
frames, in finite dimensional Hilbert spaces.

Here we discover various advantages of balanced unit norm tight
frames in signal processing. They give an exact reconstruction in
the presence of systematic errors in the transmitted coefficients,
and are optimal when these coefficients are corrupted with noises
that can have non-zero mean. Moreover, using balanced frames we can
know that the transmitted coefficients were perturbed, and we also
have an indication of the source of the error.

We analyze several properties of these types of frames. We define an
equivalence relation in the set of the dual frames of a balanced
frame, and use it to show that we can obtain all the duals from the
balanced ones. We study the problem of finding the nearest balanced
frame to a given frame, characterizing completely its existence and
giving its expression. We introduce and study a concept of
complement for balanced frames. Finally, we present many examples
and methods for constructing balanced unit norm tight frames.

\bigskip

\bigskip

{\bf Key words:} Balanced frames, unit norm tight frames, systematic
errors, non-white noises, error detection, spherical designs.

\medskip

{\bf AMS subject classification:} Primary 42C15; Secondary 15A03,
15A60, 94A05, 94A12, 94A13.

\end{abstract}

%%%%%%%%%%%%%%%%%%%%%%%%%%%%%%%%%%%%%%%%%%%%%%%%%%%%%%%%%%%%%%%%%%%%%%%%%%%%%%%%%%%%%%%%
%%%%%%%%%%%%%%%%%%%%%%%%%%%%%%%%%%%%%%%%%%%%%%%%%%%%%%%%%%%%%%%%%%%%%%%%%%%%%%%%%%%%%%%%
\section{Introduction}
A spanning set of vectors in a finite dimensional Hilbert space is
called a \textit{frame}. The redundancy of these spanning sets is
the crucial property in their vast types of applications in many
different areas of pure and applied mathematics and sciences, such
as efficient representation of vectors and operators, signal
processing, coding theory, communication theory, sampling theory,
quantum information, and computing among others (see e.g.
\cite{Casazza-Kutyniok (2012), Christensen (2016), Kovacevic-Chebira
(2008), Okoudjou (2016), Waldron (2018)}).

In this paper we study \textit{balanced frames}, i.e. those frames
which sum is zero, and several particular cases of them, especially
\textit{balanced unit norm tight frames} (see e.g.
\cite{Benedetto-Fickus (2003)} for the concept of \textit{unit norm
tight frame}).

We show that although non balanced unit norm tight frames are
optimal in many situations that appear in applications (see e.g.
\cite{Benedetto-Fickus (2003), Casazza-Kovacevic (2003),
Casazza-Kutyniok (2012), Goyal-Kovacevic-Kelner (2001), Waldron
(2018)} and the references therein), balanced unit norm tight frames
are even optimal in cases where the non balanced are not the best
ones.

In applications, a signal $f$ is usually represented by a sequence
of numbers which are measurements of $f$. In frame theory, these
measurements are expressed as inner products of $f$ with the
elements of a frame, and will be called \textit{frame coefficients}.

As we will explain in this work, the reconstructions using balanced
frames are robust against \textit{systematic errors} in the frame
coefficients. Systematic errors can come from a wrong calibration of
instruments, inexact methods of observation, or interference of the
environment in the measurement, transmission or reception processes.
A systematic error can be produced, for example, by the incorrect
zeroing of an instrument. Another example are measurements by radar
that can be systematically overestimated if we do not take into
account the slowing down of the waves in the air. Systematic errors
are not random, and cannot be reduced by taking the average of many
readings. Considering this, it is important to highlight that
balanced frames are immune to these type of errors. This means that
in the presence of a systematic error in the frame coefficients,
balanced frames still give the exact reconstruction.

In signal processing, the frame coefficients can be perturbed with
\textit{additive noises}. It has been shown
\cite{Goyal-Kovacevic-Kelner (2001)} that if the mean of these
noises is zero, the reconstruction of the signal with unit norm
tight frames is optimal. We prove that if we use balanced unit norm
tight frames, these noises can have a nonzero mean but the
reconstruction is still optimal. Thus we can deal with noises of
different sources. If the mean is non-zero we are under the presence
of \textit{non-white noises}. Nonzero mean noises appear naturally
in certain applications. Digital watermarking is an application for
which the zero mean assumption for the noises is not realistic
\cite{Kang-Huang-Zeng (2008)}. It is a useful tool for multimedia
copyright protection, access control, annotation and authentication
\cite{Moulin-Koetter (2005),Cox-Kilian-Leighton (1997),Lu-Sun-Hsu
(2006)}. In certain cases such as median filtering, a standard
signal processing method for denoising, the noises in the
watermarking channel are additive with a non-zero mean.

Given a frame, each element of the Hilbert space can be expressed as
linear combinations of the elements of the frame using the so called
\textit{dual frames}. As we will see, another advantage of balanced
frames is that they are resilient against a perturbation of the dual
frame by a constant vector, i.e. if we sum to each element of the
dual frame a fixed vector, we still obtain a dual frame. We use this
fact to define an equivalence relation in the set of dual frames of
a given balanced frame and prove that all the dual frames can be
obtained from the balanced ones.

We show that balanced frames are robust against one erasure, that
is, they remain to be a frame if we delete any of its elements. The
dual frames of these subfamilies are easy to obtain from the dual
frames of the original family.

If we use a balanced frame the sum of the frame coefficients is
always zero. So, if the transmitted numbers do not have zero sum we
know that they were perturbed. Moreover, as we will explain, if we
use balanced frames we can have an indication of when we are in the
presence of a systematic error, of random additive noises or of
other sources of perturbation as e.g. erasures.

In \cite{Bodmann-Paulsen (2004), Waldron (2018)} it is proved that
real balanced unit norm tight frames are \textit{spherical
$2$-designs}, a mathematical object applied in different areas. We
want to point out that in contrast to what usually occurs in the
context of spherical $2$-designs, we are not necessarily interested
in working with the minimum possible number of elements since, as
observed before, from the point of view of frame theory redundancy
is convenient for the applications. It can be seen in
\cite{Benedetto-Yilmaz-Powell (2004), Bodmann-Paulsen (2007)} that
balanced unit norm tight frames have advantages for sigma-delta
quantization. In
\cite{Copenhaver-Kim-Logan-Mayfield-Narayan-Petro-Sheperd (2014)}
tight frames are characterized using balanced sequences via diagram
vectors. Balanced frames are mentioned in \cite{Waldron (2018)} in
the definition of simple lift. But this concept has not so far been
developed neither their multiple advantages noticed.

\subsection{Contents.} In Section 2, we briefly
review frames.

In Section 3 we analyze the various advantages of balanced frames
and balanced unit norm tight frames for applications which were
mentioned before.

In Section 4 we show that balanced frames and in particular balanced
equal norm frames and balanced unit norm tight frames behave well,
in the sense that they are invariant under various transformations.
We find several characterizations of them and analyze properties of
their dual frames.

In Section 5 we study the closest balanced frame to a given frame in
the $\ell^{1}$ and $\ell^{2}$ norms. We give necessary and
sufficient conditions for the closest balanced frame to exist, and
for the case it exists we give its expression.

In Section 6 we introduce a concept of complement that is more
suitable for balanced frames than the definition used so far for
frames in general. Properties of this new notion are given.

In Section 7 we give many examples of balanced unit norm tight
frames such as those corresponding to roots of unity in
$\mathbb{R}^{2}$, certain types of harmonic frames, frames obtained
from Hadamard matrices, partition frames and some that are spherical
$t$-designs.

Finally, in Section 8, we present several explicit and painless
methods for constructing balanced unit norm tight frames.

%%%%%%%%%%%%%%%%%%%%%%%%%%%%%%%%%%%%%%%%%%%%%%%%%%%%%%%%%%%%%%%%%%%%%%%%%%%%%%%%%%%%%%%
%%%%%%%%%%%%%%%%%%%%%%%%%%%%%%%%%%%%%%%%%%%%%%%%%%%%%%%%%%%%%%%%%%%%%%%%%%%%%%%%%%%%%%%
\section{Preliminaries}

In this section we recall some concepts of frame theory
\cite{Casazza-Kutyniok (2012), Christensen (2016), Kovacevic-Chebira
(2008), Okoudjou (2016), Waldron (2018)}. We refer to the mentioned
works for more details. We begin introducing some notation.

\subsection{Notation}

Let $d, K \in \mathbb{N}$. Let $\mathbb{H}_{d}$ be a Hilbert space
of dimension $d$ over a field $\mathbb{F}$ where
$\mathbb{F}=\mathbb{R}$ or $\mathbb{F}=\mathbb{C}$. We write
$\langle.,.\rangle$ and $\|.\|$ for the inner product and the norm
in $\mathbb{H}_{d}$, respectively. Let
$\mathcal{L}(\mathbb{H}_{d},\mathbb{H}_{K})$ be the space of linear
transformations from $\mathbb{H}_{d}$ to $\mathbb{H}_{K}$ (we write
$\mathcal{L}(\mathbb{H}_{d})$ for
$\mathcal{L}(\mathbb{H}_{d},\mathbb{H}_{d})$). Let
$\mathcal{G}l(\mathbb{H}_{d})$ ($\mathcal{U}(\mathbb{H}_{d})$) be
set of invertible (unitary) elements in
$\mathcal{L}(\mathbb{H}_{d})$. If $T \in
\mathcal{L}(\mathbb{H}_{d},\mathbb{H}_{K})$, then $\textrm{im}(T)$,
$\textrm{ker}(T)$ and $T^{*}$ denote the range, the kernel and the
adjoint of $T,$ respectively. If $T \in \mathcal{L}(\mathbb{H}_{d})$
and $(f_{k})_{k=1}^{K}$ is a sequence in $\mathbb{H}_{d}$, we will
write $T(f_{k})_{k=1}^{K}$ for $(Tf_{k})_{k=1}^{K}$. The elements of
$\mathbb{F}^{K}$ will be considered as column vectors. We write $e$
for the real vector which entries are all equal to $1$.

\subsection{Frames}

To a sequence $\mathcal{F}=(f_{k})_{k=1}^{K}$ in $\mathbb{H}_{d}$ we
associate the {\it synthesis operator}

\centerline{$T_{\mathcal{F}}:\mathbb{F}^{K}\rightarrow
\mathbb{H}_{d},$ $T_{\mathcal{F}}c=\sum_{k=1}^{K} c_{k} f_k,$}

\noindent the {\it analysis operator}

\centerline{$T_{\mathcal{F}}^{*}: \mathbb{H}_{d}\rightarrow
\mathbb{F}^{K}$, $T_{\mathcal{F}}^{*}f=(\langle
f,f_k\rangle)_{k=1}^{K},$}

\noindent the {\it frame operator}

\centerline{$S_{\mathcal{F}}=T_{\mathcal{F}}T_{\mathcal{F}}^{*}$,}

\noindent and the {\it Gram operator}

\centerline{$G_{\mathcal{F}}=T_{\mathcal{F}}^{*}T_{\mathcal{F}}$.}

\begin{defn}\label{D marco}
Let $\mathcal{F}=(f_{k})_{k=1}^{K}$ be a sequence in
$\mathbb{H}_{d}$. $\mathcal{F}$ is a \textit{frame} for
$\mathbb{H}_{d}$ if $\text{span}~\mathcal{F}=\mathbb{H}_{d}$.
\end{defn}

\begin{prop}
Let $\mathcal{F}=(f_{k})_{k=1}^{K}$ be a sequence in
$\mathbb{H}_{d}$. The following assertions are equivalent:
\begin{enumerate}
  \item $\mathcal{F}$ is a frame for $\mathbb{H}_{d}$.
  \item $T_{\mathcal{F}}$ is onto.
  \item $T_{\mathcal{F}}^{*}$ is one to one.
  \item $S_{\mathcal{F}}$ is invertible.
  \item $\textrm{rank}(G_{\mathcal{F}})=d$.
  \item There exist $\alpha, \beta >
0$ such that
\begin{equation}\label{E cond f}
\alpha\|f\|^{2} \leq \sum_{k=1}^{K}|\langle f,f_{k}\rangle |^{2}
\leq \beta\|f\|^{2} \text{ for all $f \in \mathcal{H}$}.
\end{equation}
\end{enumerate}
\end{prop}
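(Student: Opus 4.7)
The plan is to show all six conditions are equivalent by reducing each to the injectivity/surjectivity of the analysis and synthesis operators, which in finite dimensions carries all the information.

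First I would observe the basic fact $\operatorname{im}(T_{\mathcal{F}}) = \operatorname{span}\,\mathcal{F}$, which makes (1) $\Leftrightarrow$ (2) immediate. Next I would invoke the standard orthogonality $\operatorname{im}(T_{\mathcal{F}}) = \operatorname{ker}(T_{\mathcal{F}}^{*})^{\perp}$ to get (2) $\Leftrightarrow$ (3): surjectivity of $T_{\mathcal{F}}$ onto $\mathbb{H}_{d}$ is equivalent to $T_{\mathcal{F}}^{*}$ having trivial kernel. For (2) $\Leftrightarrow$ (4), I would argue that $S_{\mathcal{F}}=T_{\mathcal{F}}T_{\mathcal{F}}^{*}$ is a self-adjoint operator on the $d$-dimensional space $\mathbb{H}_{d}$; if $T_{\mathcal{F}}$ is onto and $T_{\mathcal{F}}^{*}$ is injective then $S_{\mathcal{F}}$ is a bijection of $\mathbb{H}_{d}$, and conversely $\operatorname{im}(S_{\mathcal{F}}) \subseteq \operatorname{im}(T_{\mathcal{F}})$ forces $T_{\mathcal{F}}$ surjective whenever $S_{\mathcal{F}}$ is invertible.

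For (2) $\Leftrightarrow$ (5) I would use the finite-dimensional identity $\operatorname{rank}(T^{*}T)=\operatorname{rank}(T)$ applied to $T=T_{\mathcal{F}}$, so that $\operatorname{rank}(G_{\mathcal{F}})=\operatorname{rank}(T_{\mathcal{F}})$, and this equals $d=\dim\mathbb{H}_{d}$ precisely when $T_{\mathcal{F}}$ is onto. The remaining link is (3) $\Leftrightarrow$ (6). Rewriting the middle term of the frame inequality as $\|T_{\mathcal{F}}^{*}f\|^{2}$, the upper bound is automatic in finite dimensions (one may take $\beta=\|T_{\mathcal{F}}^{*}\|^{2}$, or more concretely $\beta=\sum_{k=1}^{K}\|f_{k}\|^{2}$ via Cauchy--Schwarz). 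For the lower bound I would note that on the unit sphere of $\mathbb{H}_{d}$, a continuous function attains its minimum by compactness; this minimum is strictly positive iff $T_{\mathcal{F}}^{*}f\neq 0$ whenever $f\neq 0$, that is, iff $T_{\mathcal{F}}^{*}$ is injective, giving $\alpha=\min_{\|f\|=1}\|T_{\mathcal{F}}^{*}f\|^{2}>0$.

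Putting these equivalences together closes the loop. The only mildly subtle step is the lower bound in (6): one must not quote Hilbert-space bounded-below arguments abstractly but use finite-dimensional compactness to produce the concrete constant $\alpha$. Everything else is linear-algebraic bookkeeping.
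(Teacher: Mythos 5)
Your proof is correct. The paper states this proposition without proof, as a standard fact recalled from the frame-theory literature it cites, so there is nothing to compare against; your argument — routing everything through injectivity/surjectivity of $T_{\mathcal{F}}$ and $T_{\mathcal{F}}^{*}$, using $\operatorname{rank}(T_{\mathcal{F}}^{*}T_{\mathcal{F}})=\operatorname{rank}(T_{\mathcal{F}})$ for (5) and compactness of the unit sphere for the lower bound in (6) — is the standard finite-dimensional derivation and is complete.
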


We call $\alpha$ and $\beta$ in (\ref{E cond f}) the \textit{frame
bounds}. The \textit{optimal lower frame bound} is
$\lambda_{min}(S_{\mathcal{F}})=\|S_{\mathcal{F}}^{-1}\|^{-1}$ and
the \textit{optimal upper frame bound} is
$\lambda_{max}(S_{\mathcal{F}})=\|S_{\mathcal{F}}\|=\|T_{\mathcal{F}}\|^2$
where $\lambda_{min}(S_{\mathcal{F}})$ and
$\lambda_{max}(S_{\mathcal{F}})$ are the smallest and largest
eigenvalues of $S_{\mathcal{F}}$, respectively.

\begin{defn}\label{D tipos de sucesiones}
Let $\mathcal{F}=(f_{k})_{k=1}^{K}$ be a sequence in
$\mathbb{H}_{d}$. We say that:
   \begin{enumerate}
     \item $\mathcal{F}$ is \textit{balanced} (B) if $\sum_{k=1}^{K} f_k =
     0$.
     \item $\mathcal{F}$ is \textit{real} if $G_{\mathcal{F}}$ is a real matrix.
     \item $\mathcal{F}$ is \textit{equal-norm} (EN) if $||f_{k}||=||f_{k'}||$ for $k, k' = 1, \ldots, K$. $\mathcal{F}$ is \textit{unit norm} (UN) if $||f_{k}||=1$ for $k = 1, \ldots, K$.
     \item $\mathcal{F}$ is \textit{isogonal} if $\mathcal{F}$ is EN and there exists an $a \in
\mathbb{R}$ such that $\langle f_{k}, f_{l}\rangle = a$ for $k, l
\in \{1, \ldots, K\}$, $k \neq l$.
   \end{enumerate}
\end{defn}

Isogonal vectors appear in \cite{Murdoch (1993)} in relation with
the structure of soap films and bubbles. They are a particular case
of \emph{equiangular frames} \cite{Fickus-Mixon-Tremain (2012),
Sustik-Tropp-Dhillon-Heath}, i.e., EN frames for which there exists
$a \in \mathbb{R}$ such that $|\langle f_{k}, f_{l}\rangle| = a$ for
$k, l \in \{1, \ldots, K\}$, $k \neq l$. A unit norm frame is a
(spherical) $m$-\emph{distance frame} if the inner products between
distinct vectors take $m$ real values \cite{Waldron (2018)}. A unit
norm isogonal frame is a $1$-distance frame. For the case $m=2$ see
e.g. \cite{Barg-Glazyrin-Okoudjou-Yu (2015)}.

\begin{defn}\label{D tipos de marcos}
Let $\mathcal{F}=(f_{k})_{k=1}^{K}$ be a frame for $\mathbb{H}_{d}$.
\begin{enumerate}
     \item $\mathcal{F}$ is an
$\alpha$-\textit{tight frame} ($\alpha$-TF) if
$S_{\mathcal{F}}=\alpha I$. $\mathcal{F}$ is a \textit{Parseval
frame} (PF) if $S_{\mathcal{F}}=I$.
     \item $\mathcal{F}$ is \textit{maximally robust to erasures} if every subset
of $\mathcal{F}$ with $d$ elements is a basis for $\mathbb{H}_{d}$.
     \item $\mathcal{F}$ is a \textit{simplex frame} if $G_{\mathcal{F}}=I-\frac{1}{K}ee^{t}$.
\end{enumerate}
\end{defn}

Maximally robust to erasures frames appeared first in
\cite{Puschel-Kovacevic (2005)}. They are also known as
\emph{generic frames} \cite{Cahill (2010)} and \emph{full spark
frames} \cite{Alexeev-Cahill-Mixon (2012)}.

If $\mathbb{H}_{d}=\mathbb{R}^{d}$ and $\mathcal{F}$ is a simplex
frame, then $\mathcal{F}$ corresponds to the $d+1$ vertices of the
regular $d$-simplex in $\mathbb{R}^{d}$. We note that a $1$-simplex
is a line segment, a $2$-simplex is a triangle, a $3$-simplex is a
tetrahedron and a $4$-simplex is a pentachoron or pentatope.

The following proposition collects some properties of frames.

\begin{prop}\label{P propiedades de marcos}
Let $\mathcal{F}=(f_{k})_{k=1}^{K}$ be a frame for $\mathbb{H}_{d}$.
\begin{enumerate}
  \item If $\mathcal{F}$ is an $\alpha$-UNTF then
$\alpha=\frac{K}{d}$.
  \item $\mathcal{F}$ is a UNPF if and only if $\mathcal{F}$ is an orthonormal
  basis.
    \item $\mathcal{F}$ is a PF with $K=d$ if and only if $\mathcal{F}$ is an orthonormal
  basis.
  \item $\mathcal{F}$ is an $\alpha$-TF if and only if
$\frac{1}{\alpha}\mathcal{F}$ is a Parseval frame.
  \item $\mathcal{F}$ is Parseval if and only if $G_{\mathcal{F}}$ is an
  orthogonal projection.
  \item $S_{\mathcal{F}}^{-1/2}\mathcal{F}$ is a PF for
$\mathbb{H}_{d}$.
  \item If $\mathcal{F}$ is a simplex frame then $K=d+1$ and $\mathcal{F}$ is an isogonal
  PF.
  \item Let $\mathcal{W}$ be a subspace of
$\mathbb{H}_{d}$ and $\pi_{\mathcal{W}}$ be the orthogonal
projection onto $\mathcal{W}$. If $\mathcal{F}$ is an $\alpha$-TF
for $\mathbb{H}_{d}$ then $\pi_{\mathcal{W}}\mathcal{F}$ is an
$\alpha$-TF for $\mathcal{W}$.
\end{enumerate}
\end{prop}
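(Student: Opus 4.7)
The overall strategy is to reduce each of the nine assertions to one of two basic identities: $S_\mathcal{F}=T_\mathcal{F}T_\mathcal{F}^*$ paired with $G_\mathcal{F}=T_\mathcal{F}^*T_\mathcal{F}$, and the trace relation $\mathrm{tr}(S_\mathcal{F})=\mathrm{tr}(G_\mathcal{F})=\sum_{k=1}^{K}\|f_k\|^2$. Items (1), (4), (6) and (9) are then direct scalings or conjugations: (1) follows from $\mathrm{tr}(\alpha I)=\alpha d$ equated with $\sum_k\|f_k\|^2=K$; (4) from $S_{c\mathcal{F}}=|c|^2 S_\mathcal{F}$, so the Parseval-normalising factor is (the square root of) $\alpha$; (6) from the direct computation $S_{S_\mathcal{F}^{-1/2}\mathcal{F}}=S_\mathcal{F}^{-1/2}S_\mathcal{F}S_\mathcal{F}^{-1/2}=I$; and (9) from $\langle f,\pi_\mathcal{W}f_k\rangle=\langle \pi_\mathcal{W}f,f_k\rangle=\langle f,f_k\rangle$ for every $f\in\mathcal{W}$, which turns the $\alpha$-tightness identity on $\mathbb{H}_d$ into the $\alpha$-tightness identity on $\mathcal{W}$.

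For (2) I would apply the Parseval identity at $f=f_j$: from $1=\|f_j\|^2=\sum_k|\langle f_j,f_k\rangle|^2=1+\sum_{k\neq j}|\langle f_j,f_k\rangle|^2$ the off-diagonal inner products must all vanish, producing an orthonormal spanning set, hence an ONB (forcing $K=d$). For (3), when $K=d$ the Gram operator $G_\mathcal{F}$ is a $d\times d$ orthogonal projection (by the characterisation to be established in (5)) of rank $d$, so $G_\mathcal{F}=I$, which again identifies $\mathcal{F}$ as an ONB. For (5), if $\mathcal{F}$ is Parseval then $G_\mathcal{F}^2=T_\mathcal{F}^*(T_\mathcal{F}T_\mathcal{F}^*)T_\mathcal{F}=T_\mathcal{F}^*T_\mathcal{F}=G_\mathcal{F}$ and self-adjointness is automatic; conversely, $T_\mathcal{F}^*T_\mathcal{F}$ and $T_\mathcal{F}T_\mathcal{F}^*$ share the same nonzero spectrum, so if $G_\mathcal{F}$ is an orthogonal projection then every nonzero eigenvalue of $S_\mathcal{F}$ equals $1$, and positivity plus invertibility of $S_\mathcal{F}$ on $\mathbb{H}_d$ give $S_\mathcal{F}=I$.

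Items (7) and (8) concern the simplex frame. Since $ee^{t}$ has rank one, $G_\mathcal{F}=I-\tfrac{1}{K}ee^{t}$ has rank $K-1$; equating this with $d$ yields $K=d+1$. A direct expansion shows $G_\mathcal{F}^{2}=G_\mathcal{F}$, so (5) gives Parseval. The diagonal and off-diagonal entries of $G_\mathcal{F}$ give the common norm $\sqrt{d/(d+1)}$ and the constant inner product $-1/(d+1)$, hence isogonality. Over $\mathbb{R}^{d}$, the Gram matrix determines the sequence up to an orthogonal transformation, and the $d+1$ vertices of the regular $d$-simplex centred at the origin (suitably scaled) realise precisely these pairwise inner products, which establishes (8).

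I expect the argument to be almost entirely bookkeeping; the only genuinely substantive step is the converse in (5), which requires using the nonzero-spectrum correspondence between $T_\mathcal{F}^*T_\mathcal{F}$ and $T_\mathcal{F}T_\mathcal{F}^*$ to upgrade the projection condition on $G_\mathcal{F}$ into the identity $S_\mathcal{F}=I$. Everything else reduces to short manipulations of synthesis and analysis operators together with the trace identity above.
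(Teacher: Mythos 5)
Your arguments are all correct, and they are the standard ones. Note, however, that the paper states Proposition~\ref{P propiedades de marcos} \emph{without proof}: it is presented as a collection of well-known facts drawn from the cited frame-theory references, so there is no argument in the paper to compare yours against. Your write-up would serve as a complete self-contained proof: the trace identity $\mathrm{tr}(T_{\mathcal{F}}T_{\mathcal{F}}^{*})=\mathrm{tr}(T_{\mathcal{F}}^{*}T_{\mathcal{F}})=\sum_{k}\|f_{k}\|^{2}$ handles (1); the Parseval identity evaluated at $f=f_{j}$ handles (2); the rank argument on $G_{\mathcal{F}}$ handles (3) and (7); the shared nonzero spectrum of $T_{\mathcal{F}}^{*}T_{\mathcal{F}}$ and $T_{\mathcal{F}}T_{\mathcal{F}}^{*}$ together with invertibility of $S_{\mathcal{F}}$ settles the converse in (5); and the computation $\langle f,\pi_{\mathcal{W}}f_{k}\rangle=\langle f,f_{k}\rangle$ for $f\in\mathcal{W}$ settles (9). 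One small point worth flagging: as literally printed, item (4) asserts that $\frac{1}{\alpha}\mathcal{F}$ is Parseval, whereas $S_{c\mathcal{F}}=|c|^{2}S_{\mathcal{F}}$ shows the correct normalising factor is $\frac{1}{\sqrt{\alpha}}$; your parenthetical ``the square root of $\alpha$'' silently corrects what appears to be a typo in the paper's statement.
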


\begin{defn}
Two frames $\mathcal{F}$ and $\mathcal{G}$ are \textit{complements}
of each other if the sum of the Gramians of
$S_{\mathcal{F}}^{-1/2}\mathcal{F}$ and
$S_{\mathcal{G}}^{-1/2}\mathcal{G}$ is the identity $I$.
\end{defn}

The complement of a frame of $K$ vectors for a space of dimension
$d$ is a frame of $K$ vectors for a space of dimension $K-d$.

\begin{defn}\label{D frame dual}
Let $\mathcal{F}=(f_{k})_{k=1}^{K}$ and
$\mathcal{G}=(g_{k})_{k=1}^{K}$ be frames for $\mathbb{H}_{d}$. Then
$\mathcal{G}$ is a \textit{dual frame} of $\mathcal{F}$ if the
following reconstruction formula holds

\centerline{$f=\sum_{k=1}^{K}\langle f,f_{k}\rangle g_{k}$, for all
$f \in \mathbb{H}_{d}$,}

\noindent or equivalently,

\centerline{$T_{\mathcal{G}}T_{\mathcal{F}}^{*}=I.$}
\end{defn}

Let $\mathcal{F}=(f_{k})_{k=1}^{K}$ be a frame for $\mathbb{H}_{d}$.
Then $(S_{\mathcal{F}}^{-1}f_{k})_{k=1}^{K}$ is the
\textit{canonical dual frame} of $\mathcal{F}$.

If $\mathcal{F}$ is an $\alpha$-tight frame for $\mathbb{H}_{d}$ we
have the following reconstruction formula

\centerline{$f=\frac{1}{\alpha}\sum_{k=1}^{K}\langle f,f_{k}\rangle
f_{k}$, for all $f \in \mathbb{H}_{d}$.}

\begin{prop}\label{P caracterizacion de duales}
Let $\mathcal{F}=(f_{k})_{k=1}^{K}$ be a frame for $\mathbb{H}_{d}$
and $\mathcal{G}=(g_{k})_{k=1}^{K}$ be a sequence in
$\mathbb{H}_{d}$. The following assertions are equivalent:
\begin{enumerate}
  \item $\mathcal{G}$ is a dual frame of $\mathcal{F}$.
  \item $T_{\mathcal{G}}=S_{\mathcal{F}}^{-1}T_{\mathcal{F}}+R$ with
  $R\in \mathcal{L}(\mathbb{F}^{K}, \mathbb{H}_{d})$ and $RT_{\mathcal{F}}^{*}=0$.
  \item $T_{\mathcal{G}}=S_{\mathcal{F}}^{-1}T_{\mathcal{F}}+W(I-T_{\mathcal{F}}^{*}S_{\mathcal{F}}^{-1}T_{\mathcal{F}})$ with $W\in \mathcal{L}(\mathbb{F}^{K}, \mathbb{H}_{d})$.
\end{enumerate}
\end{prop}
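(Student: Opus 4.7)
The plan is to prove this by first translating the duality condition into the operator identity $T_{\mathcal{G}}T_{\mathcal{F}}^{*}=I$ from Definition~\ref{D frame dual}, then parametrizing all sequences $\mathcal{G}$ via their synthesis operators. The central identity I will use repeatedly is $T_{\mathcal{F}}T_{\mathcal{F}}^{*}=S_{\mathcal{F}}$, together with $S_{\mathcal{F}}^{-1}S_{\mathcal{F}}=I$, which immediately tells us that the ``canonical'' synthesis operator $S_{\mathcal{F}}^{-1}T_{\mathcal{F}}$ satisfies the duality equation. The structure of the proof is the cycle $(1)\Rightarrow(2)\Rightarrow(3)\Rightarrow(1)$, though equivalently I can handle it as $(1)\Leftrightarrow(2)$ and $(2)\Leftrightarrow(3)$ since the computations are symmetric.

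For $(1)\Leftrightarrow(2)$, I would write any sequence $\mathcal{G}$ in the form $T_{\mathcal{G}}=S_{\mathcal{F}}^{-1}T_{\mathcal{F}}+R$ by simply defining $R:=T_{\mathcal{G}}-S_{\mathcal{F}}^{-1}T_{\mathcal{F}}\in\mathcal{L}(\mathbb{F}^{K},\mathbb{H}_{d})$, so this decomposition is tautological. Multiplying on the right by $T_{\mathcal{F}}^{*}$ and using $T_{\mathcal{F}}T_{\mathcal{F}}^{*}=S_{\mathcal{F}}$ gives $T_{\mathcal{G}}T_{\mathcal{F}}^{*}=I+RT_{\mathcal{F}}^{*}$, so the duality equation $T_{\mathcal{G}}T_{\mathcal{F}}^{*}=I$ is equivalent to $RT_{\mathcal{F}}^{*}=0$. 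I also need a brief remark that when $T_{\mathcal{G}}T_{\mathcal{F}}^{*}=I$ holds, $T_{\mathcal{G}}$ is automatically onto (since $I$ is) and so $\mathcal{G}$ really is a frame for $\mathbb{H}_{d}$, which is needed because (2) and (3) speak about a sequence whose synthesis operator has a given form.

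For $(2)\Leftrightarrow(3)$, the step $(3)\Rightarrow(2)$ is just a verification: set $R:=W(I-T_{\mathcal{F}}^{*}S_{\mathcal{F}}^{-1}T_{\mathcal{F}})$ and compute
\[
RT_{\mathcal{F}}^{*}=W\bigl(T_{\mathcal{F}}^{*}-T_{\mathcal{F}}^{*}S_{\mathcal{F}}^{-1}T_{\mathcal{F}}T_{\mathcal{F}}^{*}\bigr)=W\bigl(T_{\mathcal{F}}^{*}-T_{\mathcal{F}}^{*}S_{\mathcal{F}}^{-1}S_{\mathcal{F}}\bigr)=0.
\]
For $(2)\Rightarrow(3)$, given $R$ with $RT_{\mathcal{F}}^{*}=0$, choose $W:=R$; then the extra summand is $R(I-T_{\mathcal{F}}^{*}S_{\mathcal{F}}^{-1}T_{\mathcal{F}})=R-RT_{\mathcal{F}}^{*}S_{\mathcal{F}}^{-1}T_{\mathcal{F}}=R$, so the two forms coincide.

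There is no real obstacle here; the only subtle point is confirming that (2) and (3) do describe frames and not merely sequences, and this follows from the onto-ness of $T_{\mathcal{G}}$ as noted above. The proof is otherwise a direct application of $T_{\mathcal{F}}T_{\mathcal{F}}^{*}=S_{\mathcal{F}}$ combined with the observation that $I-T_{\mathcal{F}}^{*}S_{\mathcal{F}}^{-1}T_{\mathcal{F}}$ is precisely the projection-like operator whose right-product with $T_{\mathcal{F}}^{*}$ vanishes, making (3) a convenient concrete parametrization of the abstract kernel condition in (2).
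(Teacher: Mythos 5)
Your argument is correct: the tautological decomposition $R:=T_{\mathcal{G}}-S_{\mathcal{F}}^{-1}T_{\mathcal{F}}$ together with $T_{\mathcal{F}}T_{\mathcal{F}}^{*}=S_{\mathcal{F}}$ gives $(1)\Leftrightarrow(2)$, and the choice $W:=R$ (respectively $R:=W(I-T_{\mathcal{F}}^{*}S_{\mathcal{F}}^{-1}T_{\mathcal{F}})$) gives $(2)\Leftrightarrow(3)$; your remark that $T_{\mathcal{G}}T_{\mathcal{F}}^{*}=I$ forces $T_{\mathcal{G}}$ to be onto, so that $\mathcal{G}$ is genuinely a frame, is the right way to close the only subtle point. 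The paper states this proposition without proof as a recalled preliminary from the cited frame-theory references, and your argument is precisely the standard one found there, so there is nothing to compare beyond noting that you have supplied the omitted details correctly.
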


Note that a sequence $(f_{k})_{k=1}^{K}$ in $\mathbb{H}_{d}$ is a
BUNTF if and only if $(\sqrt{\frac{d}{K}}f_{k})_{k=1}^{K}$ is a
BENPF. In view of this, we will work either with BUNTFs or BENPFs
according to convenience.

%%%%%%%%%%%%%%%%%%%%%%%%%%%%%%%%%%%%%%%%%%%%%%%%%%%%%%%%%%%%%%%%%%%%%%%%%%%%%%%%%%%%%%%%
%%%%%%%%%%%%%%%%%%%%%%%%%%%%%%%%%%%%%%%%%%%%%%%%%%%%%%%%%%%%%%%%%%%%%%%%%%%%%%%%%%%%%%%%
\section{Applications of balanced frames}

In this section we describe various advantages of balanced frames
and BUNTFs for applications. We will see that we can gain already
very good properties assuming only balancedness, which is a
condition that can be easily obtained. Note e.g. that if
$(f_{k})_{k=1}^{K}$ is a frame then $(f_{1}, \ldots, f_{K},
-\sum_{k=1}^{K}f_{k})$ is a balanced frame.

As we mentioned in the introduction, the measurements of a signal
$f$, that in frame theory are expressed as inner products of $f$
with the elements of a frame, are used to represent it and will be
called \textit{frame coefficients}. In obtaining these measurements,
or in the transmission or reception of them, different errors or
erasures can occur.

\subsection{Robustness of the reconstructions under systematic errors}

Let $\mathcal{F}=(f_{k})_{k=1}^{K}$ be a frame for $\mathbb{H}_{d}$.
Any $f \in \mathbb{H}_{d}$ can be represented as a linear
combination $f=\sum_{k=1}^{K}c_{k}f_{k}$. If $\mathcal{F}$ is
balanced we can change the numbers $c_{k}$ by summing to each of
them a constant $c$ and the reconstruction will still be the desired
one, i.e., $f=\sum_{k=1}^{K}(c_{k}+c)f_{k}$. This situation occurs
in the presence of \textit{systematic errors}. The previous
considerations show that the reconstruction using balanced frames is
not affected by these type of errors. This is a very important fact,
because repeating the readings numerous times and taking the average
of them will not decrease systematic errors. Note that $c$ can vary
with $f$, as it happens with the $c_{k}$, and it can also be random.

\subsection{Reconstruction error bounds}

Let $\mathcal{F}=(f_{k})_{k=1}^{K}$ be a BF for $\mathbb{H}_{d}$.
Assume that $(\langle f,f_{k}\rangle)_{k=1}^{K}$ is perturbed by the
\textit{additive noises} $(a_{k})_{k=1}^{K}$, i.e. we have the
sequence $(\langle f,f_{k}\rangle+{a}_{k})_{k=1}^{K}$. Then the
reconstruction $\hat{f}$ is

\centerline{$\hat{f}=\sum_{k=1}^{K}(\langle f,f_{k}\rangle
+{a}_{k})S_{\mathcal{F}}^{-1}f_{k}=f+\sum_{k=1}^{K}{a}_{k}S_{\mathcal{F}}^{-1}f_{k}$.}

Since the frame is balanced we can give error bounds assuming that
the noises are near a constant that is not necessarily equal to
zero:

\begin{prop}%prioribound
Let $f\in \mathbb{H}_{d}$ and $(f_{k})_{k=1}^{K}$ be a
balanced frame for $\mathbb{H}_{d}$. The following statements
about the norm of the reconstruction error hold:
\begin{enumerate}
\item Suppose that there exists $\mu$ such that
$|{a}_{k}-\mu|<\epsilon$ for each $k=1, \ldots, K$. Let
$\lambda_{max}=\lambda_1\geq \lambda_2 \geq \ldots \geq
\lambda_d=\lambda_{min}>0$ be the eigenvalues of $S_{\mathcal{F}}$.
Then $||f-\hat{f}|| < \sqrt{K/\lambda_{min}}\epsilon$.
Furthermore, if $(f_{k})_{k=1}^{K}$  is a BUNTF for $\mathbb{H}_{d}$
we can assert that $||f-\hat{f}||< \sqrt {d}\epsilon$ and
$||f-\hat{f}||_{\infty}< d\epsilon$.
\item Assume that there exists $\mu$ such that
$(\sum_{k=1}^{K}| a_{k}-\mu |^{2})^{1/2} < \epsilon$. If
$(f_{k})_{k=1}^{K}$  is a BUNTF for $\mathbb{H}_{d}$ then
$||f-\hat{f}||<\sqrt{d/K} \epsilon$.
\end{enumerate}
\end{prop}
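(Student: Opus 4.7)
The plan hinges on a single identity forced by balancedness. Since $\sum_{k=1}^{K} f_{k}=0$, applying $S_{\mathcal{F}}^{-1}$ gives $\sum_{k=1}^{K} S_{\mathcal{F}}^{-1}f_{k}=0$, so for every scalar $\mu$,
$$f-\hat{f} \;=\; -\sum_{k=1}^{K} a_{k}\,S_{\mathcal{F}}^{-1}f_{k} \;=\; -\sum_{k=1}^{K}(a_{k}-\mu)\,S_{\mathcal{F}}^{-1}f_{k} \;=\; -S_{\mathcal{F}}^{-1}T_{\mathcal{F}}\,b,$$
where $b=(a_{k}-\mu)_{k=1}^{K}\in\mathbb{R}^{K}$. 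This is the whole content of the proposition: the error depends only on the fluctuations of the noises about an arbitrary reference level $\mu$, not on the level itself. Every bound below is obtained by applying a standard operator inequality to this reduced expression.

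For part (1), the pointwise hypothesis gives $\|b\|_{2}^{2}\leq K\epsilon^{2}$, and the operator norm is controlled by
$$\|S_{\mathcal{F}}^{-1}T_{\mathcal{F}}\|^{2} \;=\; \|S_{\mathcal{F}}^{-1}T_{\mathcal{F}}T_{\mathcal{F}}^{*}S_{\mathcal{F}}^{-1}\| \;=\; \|S_{\mathcal{F}}^{-1}\| \;=\; \lambda_{min}^{-1},$$
which yields $\|f-\hat{f}\|\leq \sqrt{K/\lambda_{min}}\,\epsilon$. For a BUNTF we have $S_{\mathcal{F}}=(K/d)I$, so $\lambda_{min}=K/d$ and the bound becomes $\sqrt{d}\,\epsilon$; strict inequality comes directly from the strict hypothesis $|b_{k}|<\epsilon$. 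For the $\ell^{\infty}$ bound I exploit the scalar identity $S_{\mathcal{F}}^{-1}=(d/K)I$ componentwise: for each $i$,
$$|(f-\hat{f})_{i}| \;=\; \frac{d}{K}\Big|\sum_{k=1}^{K} b_{k}(f_{k})_{i}\Big| \;\leq\; \frac{d}{K}\sum_{k=1}^{K}|b_{k}|\,|(f_{k})_{i}| \;<\; \frac{d}{K}\cdot K\epsilon \;=\; d\epsilon,$$
where $|(f_{k})_{i}|\leq\|f_{k}\|=1$ is used.

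For part (2), the hypothesis supplies $\|b\|_{2}<\epsilon$ directly, without any $\sqrt{K}$ factor. Using $S_{\mathcal{F}}^{-1}=(d/K)I$ and $\|T_{\mathcal{F}}\|^{2}=\|S_{\mathcal{F}}\|=K/d$ for a BUNTF,
$$\|f-\hat{f}\|^{2} \;=\; (d/K)^{2}\|T_{\mathcal{F}}b\|^{2} \;\leq\; (d/K)^{2}(K/d)\|b\|_{2}^{2} \;=\; (d/K)\|b\|_{2}^{2} \;<\; (d/K)\epsilon^{2},$$
so $\|f-\hat{f}\|<\sqrt{d/K}\,\epsilon$.

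Conceptually the only non-routine step is the very first one, and it is non-routine only in appreciation: once one notices that balancedness permits re-centering the noise vector at no cost, each remaining bound collapses to an operator-norm estimate or a componentwise inequality, and I expect no serious obstacle in the execution.
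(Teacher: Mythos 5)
Your proposal is correct and follows essentially the same route as the paper: both re-center the noise vector by $\mu$ using balancedness, bound the resulting $T_{\mathcal{G}}b$ term by the operator norm $\|S_{\mathcal{F}}^{-1}T_{\mathcal{F}}\|^{2}=\|S_{\mathcal{F}}^{-1}\|=\lambda_{min}^{-1}$ (the paper phrases this via the spectral radius), and handle the BUNTF cases and the $\ell^{\infty}$ bound by the same componentwise estimates. No substantive difference.
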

\begin{proof}
For the first inequality we can argue similarly as in the proof of
\cite[Proposition 2.1]{Jimenez-Wang-Wang (2007)}. Consider the
canonical dual of $\mathcal{F}$,
$\mathcal{G}=S_{\mathcal{F}}^{-1}\mathcal{F}$. Then
$T_{\mathcal{G}}=S_{\mathcal{F}}^{-1}T_{\mathcal{F}}$. The
reconstruction error is

\centerline{$f-\hat{f}=\sum_{k=1}^K
{a}_kS_{\mathcal{F}}^{-1} f_k=\sum_{k=1}^K
({a}_k-\mu)S_{\mathcal{F}}^{-1}f_k=T_{\mathcal{G}}y$,}

\noindent where $y=({a}_1-\mu, \ldots, {a}_K-\mu)^t$. Hence

\centerline{$||f-\hat{f}||^2=y^tT_{\mathcal{G}}^tT_{\mathcal{G}}y\leq\rho(T_{\mathcal{G}}^tT_{\mathcal{G}})||y||^2 <
\rho(T_{\mathcal{G}}^tT_{\mathcal{G}})K\epsilon^2$,}

\noindent where $\rho(\cdot)$ is the spectral radius. But
$\rho(T_{\mathcal{G}}^tT_{\mathcal{G}})=\rho(T_{\mathcal{G}}T_{\mathcal{G}}^t)=\rho(S_{\mathcal{F}}^{-1})=\frac{1}{\lambda_{min}}$,
so the first part of $(1)$ follows.

Now assume that $(f_{k})_{k=1}^{K}$  is a BUNTF for
$\mathbb{H}_{d}$. In this case $\lambda_1= \lambda_2 = \ldots =
\lambda_d=\lambda_{min}=\frac{K}{d}$, so from the previous result
$||f-\hat{f}||< \sqrt {d}\epsilon$. We also have

\centerline{$||f-\hat{f}||_{\infty}=||\sum_{k=1}^{K}({a}_k-
\mu)\frac{d}{K}f_{k}||_{\infty}\leq
\frac{d}{K}\sum_{k=1}^{K}|{a}_k-\mu|||f_{k}||_{\infty}<\epsilon d$.}

For (2) observe that $||f-\hat{f}||=||\sum_{k=1}^{K}({a}_k-
\mu)\frac{d}{K}f_{k}||\leq
\frac{d}{K}\sqrt{\frac{K}{d}}(\sum_{k=1}^{K} |a_k-\mu |^{2})^{1/2}<\sqrt{\frac{d}{K}}\epsilon$.
\end{proof}

\subsection{Presence of random additive noises without the zero mean assumption}

We can analyze the behavior of the reconstruction error using a
statistical model for noise. Let $\mathcal{F}=(f_{k})_{k=1}^{K}$ be
a BUNF for $\mathbb{R}^{d}$ with frame bounds $\alpha, \beta$.

Assume now that $(\langle f,f_{k}\rangle)_{k=1}^{K}$ is perturbed
with additive noises $(\eta_{k})_{k=1}^{K}$, and that each noise
${\eta}_{k}$ is a random variable with mean $E[\eta_{k}]=\mu$ and
variance $E[(\eta_{k}-\mu)^{2}]=\sigma^{2}$. Suppose also that the
noises $\eta_{k}$ and $\eta_{l}$ are uncorrelated for $k \neq l$,
i.e. $\textrm{cov}({\eta}_k,
{\eta}_l)=E[({\eta}_{k}-\mu)(\eta_{l}-\mu)]=\delta_{k,l}\sigma^{2}$
for each $k, l$. As before, the receiver will reconstruct the signal
as

\centerline{$\hat{f}=\sum_{k=1}^{K}(\langle f,f_{k}\rangle
+{\eta}_{k})S_{\mathcal{F}}^{-1}f_{k}=f+\sum_{k=1}^{K}{\eta}_{k}S_{\mathcal{F}}^{-1}f_{k}$.}

The advantage of considering the balanced case in what follows is
that the mean of the noises is not required to be zero, an
assumption needed for the non balanced case which has been
considered so far in the literature.

The \textit{mean square error} is
$MSE:=\frac{1}{d}E[||\sum_{k=1}^{K}{\eta}_{k}S_{\mathcal{F}}^{-1}f_{k}||^{2}]$.
Since $\mathcal{F}$ is balanced, we can write

\centerline{$MSE=\frac{1}{d}E[||\sum_{k=1}^{K}({\eta}_{k}-\mu)S_{\mathcal{F}}^{-1}f_{k}||^{2}]$.}

\noindent The assumptions on the noises lead to

\centerline{$MSE=\frac{1}{d}\sigma^{2}\sum_{k=1}^{K}||S_{\mathcal{F}}^{-1}f_{k}||^{2}$.}

\noindent So,

\centerline{$\frac{K\sigma^{2}}{d\beta^{2}}\leq MSE \leq
\frac{K\sigma^{2}}{d\alpha^{2}}$.}

\noindent If $d$ and $K$ are fixed it can be proved, as in
\cite[Theorem 3.1]{Goyal-Kovacevic-Kelner (2001)} but now without
assuming $\mu=0$, that the MSE is minimal if and only if the frame
is tight and that in this case $MSE=\frac{d}{K}\sigma^{2}$.

Sometimes the reconstruction is done using the orthogonal projection
of $(\langle f,f_{k}\rangle +{\eta}_{k})_{k=1}^{K}$ onto $R(
T_{\mathcal{F}}^{*})$ given by $p=T_{\mathcal{F}}^{*}f+T_{\mathcal{F}}^{*}S_{\mathcal{F}}^{-1}
T_{\mathcal{F}}({\eta}_{k})_{k=1}^{K}$. Since the frame is
balanced,

\centerline{$p=T_{\mathcal{F}}^{*}f+T_{\mathcal{F}}^{*}S_{\mathcal{F}}^{-1}T_{\mathcal{F}}({\eta}_{k}-\mu e)_{k=1}^{K}=T_{\mathcal{F}}^{*}f+\widetilde{p}$.}

\noindent So, as in \cite[Section 8.5]{Christensen (2016)} but again
without assuming $\mu=0$, it can be proved that,

\centerline{$\frac{\sigma^{2}}{\beta} \leq
E[|\widetilde{p}(k)|^{2}]\leq \frac{\sigma^{2}}{\alpha}$}

\noindent where the equality holds if $(f_{k})_{k=1}^{K}$ is a tight
frame. In this case,
$E[|\widetilde{p}(k)|^{2}]=\frac{d}{K}\sigma^{2}$.

Note that when considering BUNTFs, if the number of elements of the
frame increases (higher redundancy) both the MSE and the mean of
$|\widetilde{p}(k)|^{2}$ decrease. This shows the advantage of
using redundant BUNTFs.

\subsection{Resilience of the dual frames against fixed perturbations}

Let $(f_{k})_{k=1}^{K}, (g_{k})_{k=1}^{K}$ be sequences in
$\mathbb{H}_{d}$ where $(f_{k})_{k=1}^{K}$ is balanced. Then
$\sum_{k=1}^{K}\langle f, g_{k} \rangle f_{k} =
\sum_{k=1}^{K}\langle f, (g_{k} + g) \rangle f_{k}$ for each $g \in
\mathbb{H}_{d}$. As a consequence of this we obtain:
\begin{prop}\label{P dual mas fijo}
If $\mathcal{F}$ is a balanced frame for $\mathbb{H}_{d}$ and
$\mathcal{G}=(g_{k})_{k=1}^{K}$ is a dual frame of $\mathcal{F}$,
then $(g_{k}+g)_{k=1}^{K}$ is also a dual frame of $\mathcal{F}$ for
each $g \in \mathbb{H}_{d}$.
\end{prop}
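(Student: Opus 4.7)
The plan is to verify the reconstruction formula of Definition \ref{D frame dual} directly for the perturbed sequence $(g_k+g)_{k=1}^K$, which also yields the frame property for free. Because this is an essentially one-line calculation, my "proof proposal" is really a roadmap for that single line and for the small bookkeeping around it.

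Fix $f\in\mathbb{H}_d$ and $g\in\mathbb{H}_d$. I would expand
\[
\sum_{k=1}^{K}\langle f,f_k\rangle(g_k+g)
=\sum_{k=1}^{K}\langle f,f_k\rangle g_k \;+\; \Bigl(\sum_{k=1}^{K}\langle f,f_k\rangle\Bigr)g.
\]
The first sum equals $f$ because $\mathcal{G}$ is dual to $\mathcal{F}$. For the scalar factor in the second sum, I would use (conjugate) linearity of the inner product in the second slot to rewrite $\sum_{k=1}^{K}\langle f,f_k\rangle=\bigl\langle f,\sum_{k=1}^{K}f_k\bigr\rangle$, and then invoke the balancedness hypothesis $\sum_{k=1}^{K}f_k=0$ to conclude this scalar is $0$. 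So the whole expression reduces to $f$, i.e. the reconstruction formula $f=\sum_{k=1}^{K}\langle f,f_k\rangle(g_k+g)$ holds for every $f\in\mathbb{H}_d$.

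It remains to justify calling $(g_k+g)_{k=1}^K$ a \emph{frame} (so that Definition \ref{D frame dual} actually applies). This comes for free: the reconstruction identity above can be rephrased as $T_{(g_k+g)_{k=1}^K}T_{\mathcal{F}}^{*}=I$, so $T_{(g_k+g)_{k=1}^K}$ is surjective onto $\mathbb{H}_d$, and surjectivity of the synthesis operator is equivalent to being a frame by the proposition following Definition \ref{D marco}. Thus the dual-frame conclusion follows without any separate argument.

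There is essentially no obstacle: the only subtle point is the complex case, where one must remember that $\sum_{k}\langle f,f_k\rangle$ equals $\langle f,\sum_k f_k\rangle$ (not $\langle\sum_k f_k,f\rangle$) due to conjugate linearity in the second argument; either way the sum vanishes, so balancedness of $\mathcal{F}$ is precisely what kills the perturbation term. No additional hypothesis on $g$ (such as $g\in\operatorname{span}\mathcal{F}$) is needed, which is consistent with the displayed pre-statement computation in the excerpt.
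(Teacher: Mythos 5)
Your proof is correct and is essentially the paper's argument: the paper's one-line pre-statement computation shows $\sum_{k}\langle f,g_{k}+g\rangle f_{k}=\sum_{k}\langle f,g_{k}\rangle f_{k}$ (i.e.\ verifies $T_{\mathcal{F}}T_{\mathcal{G}'}^{*}=I$), while you verify the adjoint identity $T_{\mathcal{G}'}T_{\mathcal{F}}^{*}=I$ directly; in both cases balancedness kills the same perturbation term. Your added remark that surjectivity of the synthesis operator gives the frame property for free is a harmless bit of extra care the paper leaves implicit.
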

Proposition~\ref{P dual mas fijo} says that for a balanced frame
$\mathcal{F}$ the reconstruction is not altered if we use a dual
which is perturbed by a fixed vector, and can also be used to define
an equivalence relation in the set of dual frames of $\mathcal{F}$.

\begin{defn}
Let $\mathcal{F}$ be a balanced frame for $\mathbb{H}_{d}$. We say
that two dual frames $\mathcal{G}=(g_{k})_{k=1}^{K}$ and
$\widetilde{\mathcal{G}}=(\widetilde{g}_{k})_{k=1}^{K}$ of
$\mathcal{F}$ are \textit{equivalent} if there exists $g \in
\mathbb{H}_{d}$ such that $\widetilde{g}_{k}=g_{k}+g$ for each $k=1,
\ldots, K$.
\end{defn}

It is clear that if there exists a balanced frame in an equivalence
class, it is the unique balanced one in this class. Let
$[\mathcal{G}]=\{(g_{k}+g)_{k=1}^{K}:g \in \mathbb{H}_{d}\}$ be the
equivalence class of the dual frame $\mathcal{G}$ of $\mathcal{F}$.
If $\mathcal{G}$ is not balanced, the dual frame
$(g_{k}-\frac{1}{K}T_{\mathcal{G}}e)_{k=1}^{K}$ is equivalent to
$\mathcal{G}$ and is balanced. This shows that each of the
equivalence classes contains a unique dual frame which is balanced
and can be considered as the representative of the class. Thus, in
order to obtain all the dual frames of $\mathcal{F}$, we only need
to compute those that are balanced, the others will be in their
equivalence classes.

\subsection{Presence of erasures}

When some of the frame coefficients are no longer accessible after
the transmission, we say that an \textit{erasure} occurs.

Part (1) of the following proposition says that if one of the frame
coefficients is deleted (or is set equal to zero) we can still
recover $f$ exactly. It also says that a balanced frame
$(f_{k})_{k=1}^{K}$ remains to be a frame if we delete one of its
elements. Joining both parts of the proposition we have a
characterization of balanced frames.
\begin{prop}\label{P BF sii borro uno dual}
Let $(f_{k})_{k=1}^{K}$ be a frame for $\mathbb{H}_{d}$ and
$(\widetilde{f}_{k})_{k=1}^{K}$ be one of its duals. The following
assertions holds:
\begin{enumerate}
  \item If $(f_{k})_{k=1}^{K}$ is balanced, then
for each $l \in \{1, \ldots, K\}$, $(f_{k})_{k=1,k \neq l}^{K}$ and
$(\widetilde{f}_{k}-\widetilde{f}_{l})_{k=1,k \neq l}^{K}$ are dual
frames.
  \item  If there exists $l \in \{1, \ldots, K\}$ such that $\widetilde{f}_{l} \neq 0$, and $(f_{k})_{k=1,k \neq l}^{K}$
and $(\widetilde{f}_{k}-\widetilde{f}_{l})_{k=1,k \neq l}^{K}$ are
dual frames, then $(f_{k})_{k=1}^{K}$ is balanced.
\end{enumerate}
\end{prop}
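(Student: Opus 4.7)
The plan is to work directly from the reconstruction formula $f=\sum_{k=1}^{K}\langle f,f_{k}\rangle \widetilde{f}_{k}$ and to manipulate the candidate sum $\sum_{k\neq l}\langle f,f_{k}\rangle(\widetilde{f}_{k}-\widetilde{f}_{l})$ using the balancedness condition $\sum_{k=1}^{K}f_{k}=0$. Both parts come out of the same algebraic identity, read in opposite directions.

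For part (1), I would expand
\[
\sum_{k\neq l}\langle f,f_{k}\rangle(\widetilde{f}_{k}-\widetilde{f}_{l}) = \Big(\sum_{k\neq l}\langle f,f_{k}\rangle \widetilde{f}_{k}\Big) - \widetilde{f}_{l}\sum_{k\neq l}\langle f,f_{k}\rangle.
\]
The first bracket equals $f-\langle f,f_{l}\rangle\widetilde{f}_{l}$ by the full reconstruction formula; the scalar in the second term equals $\langle f,\sum_{k\neq l}f_{k}\rangle=\langle f,-f_{l}\rangle=-\langle f,f_{l}\rangle$ by balancedness. The two contributions cancel and the sum reduces to $f$. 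This reconstruction identity already implies that both truncated sequences are frames: surjectivity of the synthesis operator of $(\widetilde{f}_{k}-\widetilde{f}_{l})_{k\neq l}$ is immediate, and if $\langle f,f_{k}\rangle=0$ for every $k\neq l$ then the identity forces $f=0$, so the analysis operator of $(f_{k})_{k\neq l}$ is injective. The duality statement is then precisely the identity already derived.

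For part (2), I would reverse the computation. The hypothesis gives $f=\sum_{k\neq l}\langle f,f_{k}\rangle(\widetilde{f}_{k}-\widetilde{f}_{l})$ for every $f$; expanding as above and subtracting the original reconstruction formula yields
\[
0 = -\Big\langle f,\sum_{k=1}^{K}f_{k}\Big\rangle \widetilde{f}_{l}.
\]
Since $\widetilde{f}_{l}\neq 0$, this forces $\langle f,\sum_{k=1}^{K}f_{k}\rangle=0$ for every $f\in\mathbb{H}_{d}$, hence $\sum_{k=1}^{K}f_{k}=0$.

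The only delicate point is to notice, in part (1), that proving a reconstruction formula already supplies the frame property of the two truncated sequences (through surjectivity of the synthesis operator and injectivity of the analysis operator), so no separate spanning argument is needed. In part (2), the hypothesis $\widetilde{f}_{l}\neq 0$ is visibly indispensable: otherwise the final identity holds vacuously and gives no information about $\sum_{k}f_{k}$.
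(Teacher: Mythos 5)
Your proof is correct and follows essentially the same route as the paper's: in (1) the identity $f=\sum_{k\neq l}\langle f,f_k\rangle(\widetilde{f}_k-\widetilde{f}_l)$ obtained from balancedness, and in (2) the isolation of the term $\langle f,\cdot\rangle\,\widetilde{f}_l\sum_k f_k$ (the paper uses the adjoint duality relation $f=\sum_k\langle f,\widetilde{f}_k\rangle f_k$ and tests with $f=\widetilde{f}_l$, while you use the forward relation and quantify over all $f$ — an immaterial difference). Your explicit remark that the reconstruction identity itself yields the frame property of both truncated sequences is a point the paper leaves implicit.
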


\begin{proof}
(1) Suppose that $(f_{k})_{k=1}^{K}$ is balanced. Let $l \in \{1,
\ldots, K\}$. Then for each $f \in \mathbb{H}_{d}$,

\centerline{$f=\sum_{k=1}^{K} \langle f,f_k \rangle
\widetilde{f}_{k} = \sum_{k=1}^{K} \langle f,f_k \rangle
(\widetilde{f}_{k}-\widetilde{f}_{l})=\sum_{k=1,k\neq l}^{K} \langle
f,f_k \rangle (\widetilde{f}_{k}-\widetilde{f}_{l})$.}

\noindent This last expression says that $(f_{k})_{k=1,k \neq
l}^{K}$ and $(\widetilde{f}_{k}-\widetilde{f}_{l})_{k=1,k \neq
l}^{K}$ are dual frames.

(2) Suppose that $l \in \{1, \ldots, K\}$ is such that
$\widetilde{f}_{l} \neq 0$ and that $(f_{k})_{k=1,k \neq l}^{K}$ and
$(\widetilde{f}_{k}-\widetilde{f}_{l})_{k=1,k \neq l}^{K}$ are dual
frames. If $f \in \mathbb{H}_{d}$, then

\centerline{$f=\sum_{k=1,k\neq l}^{K} \langle
f,\widetilde{f}_{k}-\widetilde{f}_{l} \rangle f_k = \sum_{k=1}^{K}
\langle f,\widetilde{f}_{k}-\widetilde{f}_{l} \rangle f_k = f -
\langle f,\widetilde{f}_{l} \rangle \sum_{k=1}^{K} f_k$.}

\noindent Taking $f=\widetilde{f}_l \neq 0$, we obtain
$\sum_{k=1}^{K} f_k =0$.
\end{proof}

\subsection{Error detection} If $(f_{k})_{k=1}^{K}$ is a balanced frame, then $\sum_{k=1}^{K}
\langle f,f_k \rangle = 0$. So if the transmitted numbers
$(c_{k})_{k=1}^{K}$ satisfy $\sum_{k=1}^{K} c_{k} \neq 0$, we know
that $(c_{k})_{k=1}^{K}$ comes from a perturbation of the frame
coefficients $(\langle f, f_{k}\rangle)_{k=1}^{K}$. In this way we
can easily detect the presence of a problem.

Furthermore, using balanced frames we can have a hint about the
source of the error. If we are in the presence of a systematic error
i.e. $c_{k}=\langle f,f_k \rangle+c$ for some constant $c$, then
$\sum_{k=1}^{K} c_{k} = Kc$ independently of the signal $f$. In this
case, although we can know the error, it is not necessary to correct
it because the reconstruction with a balanced dual frame will be the
desired one. If the perturbation is due to random additive noises
$({\eta}_{k})_{k=1}^{K}$ with $|\eta_{k}-\eta| \leq \sigma$ for each
$k$, then $c_{k}= \langle f,f_k \rangle +\eta_{k}$ and
$\sum_{k=1}^{K} c_{k} = \sum_{k=1}^{K} \eta_{k}$ fluctuates without
any apparent pattern between two fixed values, also independently of
the signal. If instead the sum of the transmitted numbers is non
zero and varies with the signal, we can suspect that the error
arises from other sources. For example, assume that erasures occur,
i.e. we only receive $(\langle f, f_{k}\rangle)_{k\in I}$ where $I$
is a proper subset of $\{1,\ldots, K\}$. In this case $\sum_{k\in
I}\langle f, f_{k}\rangle$ generally varies with the signal $f$.

\subsection{BUNTFs for $\mathbb{R}^{d}$ and real spherical 2-designs}

\textit{Real spherical $t$-designs} appear in relation with cubature
formulas on the sphere in $\mathbb{R}^{d}$ \cite{Bannai-Bannai
(2009)}. They are sets of points on the unit sphere
$\mathbb{S}^{d-1}$ of $\mathbb{R}^{d}$ such that the integral on
$\mathbb{S}^{d-1}$ of any homogeneous polynomial of total degree
less than or equal to $t$ in $d$ variables is equal to the mean of
the values of the polynomial over these points. In other words, they
approximate the unit sphere in the sense that computing the average
of these polynomials only over these sets of points is identical to
taking the average over the entire unit sphere. The following result
can be found in different forms e.g. in \cite{Bodmann-Paulsen
(2004), Strohmer-Heath (2003)} (see also \cite{Waldron (2018)}).
\begin{prop}\label{P Proposition 6.1 Waldron}
A sequence $(f_{k})_{k=1}^{K}$ of unit vectors in $\Rd$ is a
spherical $2$-design if and only if it is a BUNTF for $\Rd$.
\end{prop}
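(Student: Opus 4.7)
The plan is to reduce the spherical $2$-design condition to a finite set of equations, one for each polynomial in a basis of the space of homogeneous polynomials of degree at most $2$ in $d$ variables, and then to recognise these equations as the balanced and tight conditions.

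First I would recall the definition: $(f_k)_{k=1}^{K}\subset\mathbb{S}^{d-1}$ is a spherical $2$-design exactly when
\[
\frac{1}{|\mathbb{S}^{d-1}|}\int_{\mathbb{S}^{d-1}} p(x)\,d\sigma(x) \;=\; \frac{1}{K}\sum_{k=1}^{K} p(f_k)
\]
for every polynomial $p$ of total degree $\le 2$. By linearity, it suffices to verify this on a spanning set of such polynomials, for which I would use $1$, the coordinate functions $x_i$ $(1\le i\le d)$, and the products $x_ix_j$ $(1\le i\le j\le d)$. The constant case is automatic. For the linear monomials, symmetry of the sphere gives $\int_{\mathbb{S}^{d-1}} x_i\,d\sigma=0$, so the design condition on degree $1$ reads $\sum_{k=1}^{K}(f_k)_i=0$ for every $i$, i.e.\ $\sum_{k=1}^{K} f_k=0$, which is exactly the balanced condition.

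Next I would handle the quadratic monomials. A standard calculation gives $\frac{1}{|\mathbb{S}^{d-1}|}\int_{\mathbb{S}^{d-1}} x_i x_j\,d\sigma = \delta_{ij}/d$, so the design condition for degree $2$ is $\sum_{k=1}^{K}(f_k)_i(f_k)_j=(K/d)\,\delta_{ij}$ for all $i,j$. Recognising the left side as the $(i,j)$ entry of $\sum_{k}f_k f_k^{t}$, which is the matrix of the frame operator $S_{\mathcal F}$ in the standard basis, this is equivalent to $S_{\mathcal F}=(K/d)I$, i.e.\ $\mathcal F$ is an $\alpha$-tight frame with $\alpha=K/d$. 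Combined with the standing hypothesis that the $f_k$ are unit vectors, this says precisely that $\mathcal F$ is a UNTF with the tight frame constant forced by Proposition~\ref{P propiedades de marcos}(1).

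Putting the two cases together, $(f_k)_{k=1}^{K}$ is a spherical $2$-design if and only if it is both balanced and a UNTF, which is the desired equivalence. There is no real obstacle here; the only care needed is to make sure that the chosen monomials actually span the space of polynomials of degree at most $2$ (so that design equality on this basis implies design equality for all such polynomials) and to compute the two sphere integrals $\int x_i\,d\sigma$ and $\int x_i x_j\,d\sigma$ correctly, the second being the mildly nontrivial step where the factor $1/d$ comes from $\sum_i x_i^2=1$ on the sphere together with rotational invariance.
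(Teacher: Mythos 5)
Your argument is correct and complete. Note, however, that the paper itself offers no proof of this proposition: it is quoted verbatim from Waldron (2018), so there is nothing internal to compare against. Your reduction to the monomial basis $1$, $x_i$, $x_ix_j$, with the two sphere integrals $\int x_i\,d\sigma=0$ and $\frac{1}{|\mathbb{S}^{d-1}|}\int x_ix_j\,d\sigma=\delta_{ij}/d$, is the standard route and identifies the degree-$1$ equations with balancedness and the degree-$2$ equations with $S_{\mathcal F}=\frac{K}{d}I$ exactly as it should; the value $K/d$ of the tight constant is then consistent with the unit-norm hypothesis. The only point worth making explicit is a small mismatch of definitions: the paper defines a spherical $t$-design via \emph{homogeneous} polynomials of total degree at most $t$, while you test against all polynomials of degree at most $2$. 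These conditions coincide, since any polynomial of degree at most $2$ splits into homogeneous components of degrees $0$, $1$, $2$ and linearity carries the design identity back and forth, but a one-line remark to that effect would close the loop.
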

Spherical $t$-designs are used in approximation theory, in numerical
interpolation, integration, and regularized least squares
approximation. They have connections with many areas of mathematics
such as analysis and statistics (in particular with orthogonal
polynomials and moment problems), algebraic combinatorics
(association schemes, design theory, coding theory), group theory
(spherical designs which are orbits of a finite group in the real
orthogonal group $O(n)$), number theory (designs that are shells of
Euclidean lattices are related with modular forms and the Lehmer's
conjecture about the zeros of the Ramanujan $r$ function), geometry
(sphere packing problems) and optimization (Delsarte's linear
programming method).

\subsection{Balanced sequences and tight frames}

There is a strong connection between balanced sequences and tight
frames via diagram vectors. Diagram vectors can be used for
determining whether a frame for $\mathbb{R}^2$ is tight or not
\cite{Han-Kornelson-Larson-Weber (2007)}. This notion has been
extended to $\mathbb{F}^{d}$ in
\cite{Copenhaver-Kim-Logan-Mayfield-Narayan-Petro-Sheperd (2014)}
(we refer the reader to this paper for the definition of diagram
vectors). By \cite[Proposition
3.6]{Copenhaver-Kim-Logan-Mayfield-Narayan-Petro-Sheperd (2014)} and
Proposition~\ref{P equivalencias balanceado} below, we have:

\begin{prop}
Let $c_1, \ldots , c_{K}$ be nonnegative numbers, which are not all
zero. Let $(f_{k})_{k=1}^{K}$ in $\mathbb{F}^{d}$ and let
$(\widetilde{f}_{k})_{k=1}^{K}$ in $\mathbb{F}^{d}$ the
corresponding diagram vectors. Then $(c_{k}f_{k})_{k=1}^{K}$ is a
tight frame for $\mathbb{F}^{d}$ if and only if
$(c_{k}^{2}\widetilde{f}_{k})_{k=1}^{K}$ is balanced.
\end{prop}

This result shows that balanced sequences are useful in the study of
tight frames.

%%%%%%%%%%%%%%%%%%%%%%%%%%%%%%%%%%%%%%%%%%%%%%%%%%%%%%%%%%%%%%%%%%%%%%%%%%%%%%%%%%%%%%%%
%%%%%%%%%%%%%%%%%%%%%%%%%%%%%%%%%%%%%%%%%%%%%%%%%%%%%%%%%%%%%%%%%%%%%%%%%%%%%%%%%%%%%%%%
\section{Properties}

In this section we study properties of BFs and in particular of BPFs
and BUNTFs. We consider their behavior under transformations, give
several characterizations and analyze duality.

%%%%%%%%%%%%%%%%%%%%%%%%%%%%%%%%%%%%%%%%%%%%%%%%%%%%%
\subsection{Invariance under certain transformations}

Given a frame, it is important which properties are preserved under
certain transformations. The following are analogous to those
presented in \cite{Puschel-Kovacevic (2005)}, here we analyze them
regarding balancedness.
\begin{prop}\label{P proyeccion BTF}
\begin{enumerate}
  \item Let $A\in \mathcal{G}l(\mathbb{H}_{d})$ and $B\in
\mathcal{G}l(\mathbb{F}^{K})$. If $Be=e$, then $\mathcal{F}$ is a
balanced frame if and only if $A \mathcal{F} B$ is a balanced frame.
  \item Let $ a\neq 0$, $U\in \mathcal{U}(\mathbb{H}_{d})$ and $V\in
\mathcal{U}(\mathbb{F}^{K})$ such that $Ve=e$. Then $aU \mathcal{F}
V $ is a BTF if and only if $\mathcal{F}$ is a BTF.
  \item Let $ a\neq 0$, $U\in \mathcal{U}(\mathbb{H}_{d})$. Then $aU \mathcal{F}$ ($U\mathcal{F}$) is a BENF (BUNF) if and only
if $\mathcal{F}$ is a BENF (BUNF).
  \item Let $A\in \mathcal{G} (\mathbb{H}_{d})$. Then $\mathcal{F}$ is a maximally robust to erasures BF if
and only if $A\mathcal{F}$ is a maximally robust to erasures BF.
  \item $\mathcal{F}$ is a BUNTF if and only if $\overline{\mathcal{F}}$
is a BUNTF.
  \item Let $\mathcal{W}$ be a subspace of
$\mathbb{H}_{d}$ and $\pi_{\mathcal{W}}$ be the orthogonal
projection onto $\mathcal{W}$. If $\mathcal{F}$ is an $\alpha$-BTF
for $\mathbb{H}_{d}$ then $\pi_{\mathcal{W}}\mathcal{F}$ is an
$\alpha$-BTF for $\mathcal{W}$.
  \item Let $A\in \mathcal{L}(\mathbb{H}_{d},\mathbb{H}_{n})$ be an
isometry, i.e., $A^{*}A=I$, then $\mathcal{F}$ is an $\alpha$-BUNTF
for $\mathbb{H}_{d}$ if and only if $A\mathcal{F}$ is an
$\alpha$-BUNTF for $\mathbb{H}_{n}$.
\end{enumerate}
\end{prop}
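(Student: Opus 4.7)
The plan is to reduce every assertion to elementary manipulations with the synthesis operator $T_{\mathcal{F}}$, the frame operator $S_{\mathcal{F}}=T_{\mathcal{F}}T_{\mathcal{F}}^{*}$, and the pointwise norm. The key algebraic facts are $T_{A\mathcal{F}B}=AT_{\mathcal{F}}B$ (so $S_{A\mathcal{F}B}=AT_{\mathcal{F}}BB^{*}T_{\mathcal{F}}^{*}A^{*}$), that balancedness of $\mathcal{F}$ is the single identity $T_{\mathcal{F}}e=0$, and that the transformations considered either preserve $e$, preserve $BB^{*}$, or preserve pointwise norms.

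I would prove (1) first and use it as a lemma for (2)--(4). Indeed, $T_{A\mathcal{F}B}e=AT_{\mathcal{F}}Be=AT_{\mathcal{F}}e$ when $Be=e$; since $A$ is invertible, $T_{A\mathcal{F}B}e=0$ is equivalent to $T_{\mathcal{F}}e=0$. Moreover $A\mathcal{F}B$ is a frame exactly when $\mathcal{F}$ is (both operators invertible, so $S_{A\mathcal{F}B}$ is invertible iff $S_{\mathcal{F}}$ is). For (2), applying (1) with $A=aU$ and $B=V$ handles balancedness; tightness reduces to $S_{aU\mathcal{F}V}=|a|^{2}US_{\mathcal{F}}U^{*}$ (using $VV^{*}=I$), which equals a scalar multiple of the identity iff $S_{\mathcal{F}}$ does. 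Items (3) and (4) are then immediate: (1) with $B=I$ takes care of balancedness, unitarity preserves each $\|f_{k}\|$, and invertibility of $A$ preserves the property that any $d$-subset is a basis (since it maps bases to bases), giving the maximal robustness statement.

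For (5) I would simply note that conjugation is a coordinatewise operation that preserves norms and commutes with summation, while $S_{\overline{\mathcal{F}}}=\overline{S_{\mathcal{F}}}$, so tightness with constant $\alpha=K/d\in\mathbb{R}$ is preserved. For (6), tightness is already Proposition~\ref{P propiedades de marcos}(9), and balancedness follows from linearity of $\pi_{\mathcal{W}}$: $\sum_{k}\pi_{\mathcal{W}}f_{k}=\pi_{\mathcal{W}}\sum_{k}f_{k}=0$. For (7), the unit norm property follows because $\|Af_{k}\|^{2}=\langle A^{*}Af_{k},f_{k}\rangle=\|f_{k}\|^{2}$, balancedness from applying the linear map $A$ to the sum, and tightness from $S_{A\mathcal{F}}=AS_{\mathcal{F}}A^{*}=\alpha AA^{*}$, which acts as $\alpha I$ on $\mathrm{im}(A)$ and which together with the span condition characterizes the $\alpha$-tight property on the relevant subspace.

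The main conceptual subtlety, and the only place one must be careful, is (7): since $A$ is an isometry $\mathbb{H}_{d}\to\mathbb{H}_{n}$, the sequence $A\mathcal{F}$ spans $\mathrm{im}(A)$ rather than $\mathbb{H}_{n}$ in general, so one has to read ``$\alpha$-BUNTF for $\mathbb{H}_{n}$'' as the tight frame condition on the $d$-dimensional range of $A$ (equivalently, use $AA^{*}=\pi_{\mathrm{im}(A)}$). Everything else is a direct bookkeeping with $T_{\mathcal{F}}$, $S_{\mathcal{F}}$ and the vector $e$, so the work is short once (1) is in hand.
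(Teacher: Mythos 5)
Your proposal is correct and follows essentially the same route as the paper, which itself only writes out item (1) via the identity $T_{A\mathcal{F}B}=AT_{\mathcal{F}}B$ together with the criterion $T_{\mathcal{F}}e=0$ and declares the remaining items straightforward from the definitions. Your additional detail for (2)--(7), including the careful reading of (7) as a tight frame condition on $\mathrm{im}(A)$, is consistent with and fills in what the paper leaves implicit.
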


\begin{proof}
In each case, the proof follows straightforward from the
definitions. To illustrate we show (1):

If $A\in \mathcal{L}(\mathbb{H}_{d})$ is injective and $B\in
\mathcal{L}(\mathbb{F}^{K})$ is such that $Be=e$, then
$T_{\mathcal{F}}e=0$ if and only if $AT_{\mathcal{F}}Be=0$.
Moreover, if $A$ and $B$ are invertible, $T_{\mathcal{F}}$ is onto
if and only if $AT_{\mathcal{F}}B$ is onto.
\end{proof}

In view of Proposition~\ref{P proyeccion BTF}, we define an
equivalence relation:
\begin{defn}
Two frames $\mathcal{F}$ and $\mathcal{G}$ for $\mathbb{H}_{d}$ are
\textit{unitary equivalent} if and only if there exists a unitary
operator $U \in L(\mathcal{H})$ such that
$\mathcal{G}=U\mathcal{F}$.
\end{defn}
In the previous equivalence relation, the permutation or numbering
of the elements of $\mathcal{F}$ or $\mathcal{G}$ will not be
considered. Two PFs are unitary equivalent if and only if they have
the same Gram matrix \cite[Corollary 2.1.]{Waldron (2018)}.

As a consequence of Proposition~\ref{P proyeccion BTF} and
Proposition~\ref{P propiedades de marcos}:
\begin{cor}\label{C BF F SinvF Sinv12F}
Let $\mathcal{F}$ be a frame for $\mathbb{H}_{d}$. The following
assertions are equivalent:
\begin{enumerate}
  \item $\mathcal{F}$ is a BF for $\mathbb{H}_{d}$.
  \item $S_{\mathcal{F}}^{-1}\mathcal{F}$ is a BF for
$\mathbb{H}_{d}$.
  \item $S_{\mathcal{F}}^{-1/2}\mathcal{F}$ is a BPF for
$\mathbb{H}_{d}$.
\end{enumerate}
\end{cor}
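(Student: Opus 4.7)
The plan is to obtain this corollary as an immediate application of Proposition~\ref{P proyeccion BTF}(1) together with Proposition~\ref{P propiedades de marcos}(6), choosing convenient operators $A$ and $B$.

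First I would establish (1)$\Leftrightarrow$(2). Since $\mathcal{F}$ is a frame, the frame operator $S_{\mathcal{F}}$ is invertible, hence $A:=S_{\mathcal{F}}^{-1}\in\mathcal{G}l(\mathbb{H}_d)$. Taking $B=I\in\mathcal{G}l(\mathbb{F}^K)$, trivially $Be=e$, so Proposition~\ref{P proyeccion BTF}(1) yields that $\mathcal{F}$ is balanced if and only if $A\mathcal{F}B = S_{\mathcal{F}}^{-1}\mathcal{F}$ is balanced.

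Next I would show (1)$\Leftrightarrow$(3). Because $S_{\mathcal{F}}$ is positive definite and invertible, its positive square root $S_{\mathcal{F}}^{1/2}$ and the inverse $S_{\mathcal{F}}^{-1/2}$ belong to $\mathcal{G}l(\mathbb{H}_d)$. Applying Proposition~\ref{P proyeccion BTF}(1) with $A=S_{\mathcal{F}}^{-1/2}$ and $B=I$ gives that $\mathcal{F}$ is balanced iff $S_{\mathcal{F}}^{-1/2}\mathcal{F}$ is balanced. By Proposition~\ref{P propiedades de marcos}(6), $S_{\mathcal{F}}^{-1/2}\mathcal{F}$ is automatically a Parseval frame, so balancedness of $\mathcal{F}$ is equivalent to $S_{\mathcal{F}}^{-1/2}\mathcal{F}$ being a BPF.

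There is no real obstacle: the entire corollary is a straightforward consequence of the invariance principle already proved, once one notes that $S_{\mathcal{F}}^{-1}$ and $S_{\mathcal{F}}^{-1/2}$ are invertible operators on $\mathbb{H}_d$ and that pairing them with $B=I$ automatically satisfies the hypothesis $Be=e$. The only thing worth emphasizing in the writeup is the use of Proposition~\ref{P propiedades de marcos}(6) to upgrade ``balanced frame'' to ``balanced Parseval frame'' in part (3).
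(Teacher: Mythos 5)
Your proof is correct and is exactly the argument the paper intends: the corollary is stated there as an immediate consequence of Proposition~\ref{P proyeccion BTF}(1) (applied with $A=S_{\mathcal{F}}^{-1}$, respectively $A=S_{\mathcal{F}}^{-1/2}$, and $B=I$, so that $Be=e$ holds trivially) together with Proposition~\ref{P propiedades de marcos}(6) to get the Parseval property in part (3). Nothing is missing.
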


\subsection{Some characterizations}

The following proposition gives several equivalences for a sequence
to be balanced.
\begin{prop}\label{P equivalencias balanceado}
Let $\mathcal{F}=(f_{k})_{k=1}^{K}$ be a sequence in
$\mathbb{H}_{d}$. The following assertions are equivalent:
\begin{enumerate}
  \item $\mathcal{F}$ is balanced.
  \item $T_{\mathcal{F}}e=0$.
  \item $G_{\mathcal{F}}e=0$.
  \item $\sum_{k=1}^{K} \langle f_{l},f_k \rangle = 0$ for each $l \in \{1, \ldots , K\}$.
  \item $\sum_{k,l=1}^{K} \langle f_{k},f_l \rangle = 0$.
  \item $\sum_{k=1}^{K} \langle f,f_k \rangle = 0$ for each
$f \in \mathbb{H}_{d}$.
  \item $\sum_{k=1}^{K}||f-f_{k}||^{2}=\sum_{k=1}^{K}||f_{k}||^{2}+K||f||^{2}$ for each
$f \in \mathbb{H}_{d}$.
  \item $\sum_{k=1, k\neq
l}^{K}||f_{l}-f_{k}||^{2}=\sum_{k=1}^{K}||f_{k}||^{2}+K||f_{l}||^{2}$
for each $l \in \{1, \ldots , K\}$.
\end{enumerate}
\end{prop}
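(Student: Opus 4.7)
The plan is to close a cycle through items (1)--(5), then attach (6)--(8) by separate short equivalences with (1).

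First I would handle the core chain. Equivalence (1) $\Leftrightarrow$ (2) is immediate because $T_{\mathcal{F}}e = \sum_{k=1}^{K} f_{k}$ by definition of the synthesis operator. For (2) $\Rightarrow$ (3) I would just apply $T_{\mathcal{F}}^{*}$ to $T_{\mathcal{F}}e=0$ and use $G_{\mathcal{F}} = T_{\mathcal{F}}^{*}T_{\mathcal{F}}$. To get back from (3) to (2) without assuming $\mathcal{F}$ is a frame (the statement is for a sequence), I would compute $\|T_{\mathcal{F}}e\|^{2} = \langle T_{\mathcal{F}}^{*}T_{\mathcal{F}}e,e\rangle = \langle G_{\mathcal{F}}e,e\rangle = 0$. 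For (3) $\Leftrightarrow$ (4), note that the $l$-th component of $G_{\mathcal{F}}e$ is $\sum_{k=1}^{K}\langle f_{k},f_{l}\rangle$, whose vanishing is equivalent to $\sum_{k=1}^{K}\langle f_{l},f_{k}\rangle = 0$ by conjugation. Then (4) $\Rightarrow$ (5) is just summing over $l$, and (5) $\Rightarrow$ (1) follows from $\sum_{k,l}\langle f_{k},f_{l}\rangle = \|\sum_{k=1}^{K} f_{k}\|^{2}$, so the sum being zero forces $\sum_{k=1}^{K} f_{k}=0$.

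Next, (1) $\Leftrightarrow$ (6) is quick in both directions: from (1), $\sum_{k=1}^{K}\langle f, f_{k}\rangle = \langle f, \sum_{k=1}^{K} f_{k}\rangle = 0$; and from (6), plugging in $f = \sum_{k=1}^{K} f_{k}$ gives $\|\sum_{k=1}^{K} f_{k}\|^{2} = 0$.

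For (1) $\Leftrightarrow$ (7) and (1) $\Leftrightarrow$ (8) I would expand the squared norms via $\|u-v\|^{2} = \|u\|^{2} - 2\,\mathrm{Re}\,\langle u,v\rangle + \|v\|^{2}$. Summing over $k$ in (7) gives
\[
\sum_{k=1}^{K}\|f-f_{k}\|^{2} - K\|f\|^{2} - \sum_{k=1}^{K}\|f_{k}\|^{2} = -2\,\mathrm{Re}\,\Bigl\langle f, \sum_{k=1}^{K} f_{k}\Bigr\rangle,
\]
so (7) is equivalent to $\mathrm{Re}\,\langle f, \sum_{k=1}^{K} f_{k}\rangle = 0$ for every $f \in \mathbb{H}_{d}$; choosing $f = \sum_{k=1}^{K} f_{k}$ yields (1) (handling both the real and complex cases in one stroke). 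The analogous expansion for (8) reduces it to $\mathrm{Re}\sum_{k=1}^{K}\langle f_{l},f_{k}\rangle = 0$ for each $l$; summing over $l$ gives $\mathrm{Re}\,\|\sum_{k=1}^{K} f_{k}\|^{2} = 0$, again forcing (1). The reverse implications (1) $\Rightarrow$ (7) and (1) $\Rightarrow$ (8) then follow by the same expansions read backwards.

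The only mild subtlety, and what I'd flag as the main obstacle, is the complex case for (7) and (8): expanding only captures the real part of the inner product, so one must not conclude (6) prematurely but instead evaluate at a well-chosen test vector (or sum over $l$ in (8)) to kill the imaginary part as well. Once that is handled, everything else is bookkeeping.
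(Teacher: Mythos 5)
Your proof is correct and follows essentially the same route as the paper's: the same identities ($T_{\mathcal{F}}e=\sum_{k}f_{k}$, the entries of $G_{\mathcal{F}}e$, $\sum_{k,l}\langle f_{k},f_{l}\rangle=\|T_{\mathcal{F}}e\|^{2}$, and the expansion of $\|f-f_{k}\|^{2}$) drive each equivalence. The only divergence is in (7)--(8), where the paper recovers the imaginary part by also expanding $\|if-f_{k}\|^{2}$, while you evaluate at the test vector $f=\sum_{k}f_{k}$ (respectively sum over $l$); both are valid, and your handling of (8) is arguably the more careful of the two.
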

\begin{proof}
Taking into account the definition of balanced sequences and that
$\textrm{ker}(T_{\mathcal{F}})=\textrm{ker}(G_{\mathcal{F}})$, $(1)
\Leftrightarrow (2) \Leftrightarrow (3)$ follows immediately.

Considering the entries of the matrix $G_{\mathcal{F}}$, it is
immediate that $(3) \Leftrightarrow (4)$.

Observe that $\sum_{k,l=1}^{K} \langle f_{k},f_l
\rangle=||T_{\mathcal{F}}e||^{2}$. Thus we have $(2) \Leftrightarrow
(5)$.

For $(1) \Leftrightarrow (6)$ we note that $\sum_{k=1}^{K}f_k = 0$
if and only if $\langle f,\sum_{k=1}^{K}f_k \rangle = 0$ for each $f
\in \mathbb{H}_{d}$.

$(6) \Leftrightarrow (7)$ follows from
$||f-f_{k}||^{2}=||f_{k}||^{2}-2\text{Re}(\langle f,f_{k}\rangle) +
||f||^{2}$ and $||if-f_{k}||^{2}=||f_{k}||^{2}+2\text{Im}(\langle
f,f_{k}\rangle) + ||f||^{2}$. Similarly it can be proved $(4)
\Leftrightarrow (8)$.
\end{proof}

From Proposition~\ref{P equivalencias balanceado} we can obtain the
next well-known basic result about simplex frames:

\begin{cor}\label{C simplex then balanced}
If $\mathcal{F}$ is a simplex frame then $\mathcal{F}$ is balanced.
\end{cor}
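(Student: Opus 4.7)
The plan is to apply the equivalence $(1) \Leftrightarrow (3)$ in Proposition~\ref{P equivalencias balanceado}, which states that $\mathcal{F}$ is balanced if and only if $G_{\mathcal{F}} e = 0$. So the entire argument reduces to a one-line matrix computation using the defining property of a simplex frame.

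More concretely, I would start by recalling from Definition~\ref{D tipos de marcos}(3) that a simplex frame satisfies $G_{\mathcal{F}} = I - \frac{1}{K} e e^{t}$. Then I would compute $G_{\mathcal{F}} e = e - \frac{1}{K} e (e^{t} e) = e - \frac{1}{K} e \cdot K = 0$, using $e^{t} e = K$ since $e \in \mathbb{F}^{K}$ is the all-ones vector. By the cited equivalence, this yields balancedness.

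There is essentially no obstacle: the proof is immediate once one identifies that the simplex Gramian has $e$ as an eigenvector with eigenvalue $0$, which is precisely the characterization of balancedness at the level of the Gramian. One could alternatively invoke equivalence $(4)$ of the same proposition by noting that each row of $G_{\mathcal{F}}$ sums to $1 - \frac{1}{K} \cdot K = 0$, but the matrix-vector form is cleaner.
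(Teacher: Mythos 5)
Your proof is correct and follows exactly the route the paper intends: the corollary is stated as an immediate consequence of Proposition~\ref{P equivalencias balanceado}, and the computation $G_{\mathcal{F}}e = e - \frac{1}{K}e(e^{t}e) = 0$ is precisely the omitted one-line verification via the equivalence $(1) \Leftrightarrow (3)$.
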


There exists a bijective correspondence between the BUNTFs for
$\mathbb{H}_{d}$ and the BUNTFs for its dual space. This is a
consequence of the following result which follows from the Riesz
representation theorem and Proposition~\ref{P equivalencias
balanceado}:
\begin{cor}\label{C BUNTF funcionales}
$\mathcal{F}=(f_{k})_{k=1}^{K}$ is a BUNTF for $\mathbb{H}_{d}$ if
and only if $(\langle ., f_{k} \rangle)_{k=1}^{K}$ is a BUNTF for
the dual space $\mathbb{H}_{d}^{*}$.
\end{cor}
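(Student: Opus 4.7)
The plan is to transport the three defining conditions (balanced, unit norm, tight) from $\mathbb{H}_d$ to $\mathbb{H}_d^*$ using the Riesz representation theorem, invoking Proposition~\ref{P equivalencias balanceado} for the balancedness part.

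First I would introduce the Riesz map $\Phi\colon\mathbb{H}_d\to\mathbb{H}_d^{*}$ defined by $\Phi(f)=\langle\cdot,f\rangle$. By the Riesz representation theorem this is a bijection, and it is an isometry when $\mathbb{H}_d^{*}$ is equipped with its canonical inner product characterized by $\langle\Phi(g),\Phi(h)\rangle_{\mathbb{H}_d^{*}}=\langle h,g\rangle$ (the conjugate order being the standard choice that makes $\Phi$ anti-linear in the complex case and a genuine linear isometry in the real case). In particular, $\|\Phi(f_k)\|_{\mathbb{H}_d^{*}}=\|f_k\|$, so the unit-norm condition transports verbatim.

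Next, for balancedness I would apply the equivalence $(1)\Leftrightarrow(6)$ of Proposition~\ref{P equivalencias balanceado}: $\mathcal{F}$ is balanced if and only if $\sum_{k=1}^{K}\langle f,f_k\rangle=0$ for every $f\in\mathbb{H}_d$. But $\sum_{k=1}^{K}\langle f,f_k\rangle = \bigl(\sum_{k=1}^{K}\Phi(f_k)\bigr)(f)$, so this condition says exactly that $\sum_{k=1}^{K}\Phi(f_k)=0$ as an element of $\mathbb{H}_d^{*}$, i.e., that $(\Phi(f_k))_{k=1}^{K}$ is balanced.

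Finally, for tightness, pick an arbitrary $\phi\in\mathbb{H}_d^{*}$ and write $\phi=\Phi(g)$ for the unique $g\in\mathbb{H}_d$. Then $\langle\phi,\Phi(f_k)\rangle_{\mathbb{H}_d^{*}}=\langle f_k,g\rangle$, so $\sum_{k=1}^{K}|\langle\phi,\Phi(f_k)\rangle_{\mathbb{H}_d^{*}}|^{2}=\sum_{k=1}^{K}|\langle g,f_k\rangle|^{2}$, while $\|\phi\|_{\mathbb{H}_d^{*}}^{2}=\|g\|^{2}$. Consequently the frame identity $\sum_{k}|\langle g,f_k\rangle|^{2}=\alpha\|g\|^{2}$ for all $g\in\mathbb{H}_d$ holds if and only if the analogous identity holds for $(\Phi(f_k))_k$ on $\mathbb{H}_d^{*}$; in particular $\mathcal{F}$ is an $\alpha$-tight frame for $\mathbb{H}_d$ (with the forced value $\alpha=K/d$ in the unit-norm case) iff $(\Phi(f_k))_k$ is an $\alpha$-tight frame for $\mathbb{H}_d^{*}$.

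There is no real obstacle here; the only point requiring a bit of care is fixing the convention for the inner product on $\mathbb{H}_d^{*}$ so that the Riesz map is a genuine isometric (anti)isomorphism, after which the three properties translate one by one.
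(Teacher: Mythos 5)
Your proposal is correct and follows exactly the route the paper indicates: the paper states this corollary as an immediate consequence of the Riesz representation theorem together with Proposition~\ref{P equivalencias balanceado}, and your argument simply fills in those details (transporting unit norm and tightness through the Riesz isometry, and balancedness via the equivalence $(1)\Leftrightarrow(6)$). No gaps; this matches the paper's intended proof.
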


Proposition~\ref{P BF sii borro uno dual} says that a BF is
one-robust to erasures. This suggests the following version of
\cite[Corollary 5.1]{Casazza-Kovacevic (2003)}:
\begin{prop}\label{P balanceado proyeccion base ortonormal}
Let $(e_{k})_{k=1}^{K}$ be an orthonormal basis for $\mathbb{H}_{K}$
and $\pi_{\mathcal{W}}$ be the an orthogonal projection onto a subspace
$\mathcal{W}$ of $\mathbb{H}_{K}$. The following are equivalent:
\begin{enumerate}
  \item $(\pi_{\mathcal{W}}e_{k})_{k=1}^{K}$ is a balanced Parseval frame for $\mathcal{W}$.
  \item $\sum_{k=1}^{K}e_{k} \in \mathcal{W}^{\perp}$.
  \item There exists $f \in \mathcal{W}^{\perp}$ such that $\langle f, e_{k} \rangle
  = 1$ for each $k=1, \ldots, K$.
\end{enumerate}
\end{prop}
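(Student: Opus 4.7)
The plan is to exploit the fact that, by the original Corollary~5.1 in Casazza-Kovacevic, the sequence $(\pi_{\mathcal{W}}e_k)_{k=1}^K$ is automatically a Parseval frame for $\mathcal{W}$ whenever $(e_k)_{k=1}^K$ is an orthonormal basis of $\mathbb{H}_K$ and $\pi_{\mathcal{W}}$ is an orthogonal projection. With that reduction in hand, the only extra content in (1) is the balancedness condition, so the work is to identify balancedness with (2), and then to pass freely between (2) and (3).

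First I would establish (1) $\Leftrightarrow$ (2). Since $(\pi_{\mathcal{W}}e_k)_{k=1}^K$ is Parseval for $\mathcal{W}$ for free, statement (1) reduces to $\sum_{k=1}^{K}\pi_{\mathcal{W}}e_k = 0$. Linearity of $\pi_{\mathcal{W}}$ gives $\pi_{\mathcal{W}}\bigl(\sum_{k=1}^K e_k\bigr)=0$, and since $\ker \pi_{\mathcal{W}}=\mathcal{W}^{\perp}$, this is exactly (2).

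Next I would establish (2) $\Leftrightarrow$ (3). For (2) $\Rightarrow$ (3), set $f=\sum_{k=1}^{K}e_k$; then $f\in \mathcal{W}^{\perp}$ by hypothesis, and orthonormality of the basis gives $\langle f,e_l\rangle = \sum_{k=1}^K \langle e_k,e_l\rangle = 1$ for every $l$. For (3) $\Rightarrow$ (2), given such an $f\in \mathcal{W}^{\perp}$, expand it in the orthonormal basis: $f=\sum_{k=1}^{K}\langle f,e_k\rangle e_k = \sum_{k=1}^{K}e_k$, so $\sum_{k=1}^K e_k = f \in \mathcal{W}^{\perp}$.

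There is no serious obstacle here; the only subtle point is invoking the Parseval property of $(\pi_{\mathcal{W}}e_k)$ so that (1) really does collapse to balancedness. One should note that the equivalence remains correct in the degenerate case $\mathcal{W}=\{0\}$, where all three statements are vacuously true (the sum lies in $\mathcal{W}^{\perp}=\mathbb{H}_K$ and the zero sequence is a balanced Parseval frame for $\{0\}$).
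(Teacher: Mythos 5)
Your proposal is correct and follows essentially the same route as the paper: both first invoke the fact that $(\pi_{\mathcal{W}}e_{k})_{k=1}^{K}$ is automatically a Parseval frame for $\mathcal{W}$, reduce (1) to the balancedness condition $\pi_{\mathcal{W}}\sum_{k=1}^{K}e_{k}=0$, and use $f=\sum_{k=1}^{K}e_{k}$ as the witness, with the orthonormal expansion $f=\sum_{k=1}^{K}\langle f,e_{k}\rangle e_{k}$ closing the loop. The only cosmetic difference is that you prove $(1)\Leftrightarrow(2)$ and $(2)\Leftrightarrow(3)$ separately while the paper runs the cycle $(1)\Rightarrow(2)\Rightarrow(3)\Rightarrow(1)$; the substance is identical.
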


\begin{proof}
By Proposition~\ref{P propiedades de marcos},
$(\pi_{\mathcal{W}}e_{k})_{k=1}^{K}$ is a Parseval frame for
$\mathcal{W}$.

$(1) \Rightarrow (2)$ Since $(\pi_{\mathcal{W}}e_{k})_{k=1}^{K}$ is
balanced, $\pi_{\mathcal{W}}\sum_{k=1}^{K}e_{k}=0$. This shows that
$\sum_{k=1}^{K}e_{k} \in \mathcal{W}^{\perp}$.

$(2) \Rightarrow (3)$ Take $f=\sum_{k=1}^{K}e_{k}$.

$(3) \Rightarrow (1)$ Let $f \in \mathcal{W}^{\perp}$ such that
$\langle f, e_{k} \rangle = 1$ for each $k=1, \ldots, K$. Then
$\sum_{k=1}^{K}\pi_{\mathcal{W}}e_{k}=\pi_{\mathcal{W}}
\sum_{k=1}^{K}\langle f, e_{k} \rangle e_{k}=\pi_{\mathcal{W}}f=0$.
\end{proof}

We have the following versions of the Naimark characterization for
BPFs:
\begin{thm}\label{T Naimark para BPF}
A sequence $(f_{k})_{k=1}^{K}$ in $\mathbb{H}_{d}$ is a BPF for
$\mathbb{H}_{d}$ if and only if there is a larger Hilbert space
$\mathbb{H}_{K} \supseteq \mathbb{H}_{d}$ and an orthonormal basis
$(g_{k})_{k=1}^{K}$ for $\mathbb{H}_{K}$ satisfying
$\sum_{k=1}^{K}g_{k} \in \mathbb{H}_{d}^{\perp}$ so that
$f_{k}=\pi_{\mathbb{H}_{d}}g_{k}$ for each $k = 1, \ldots, K$.
\end{thm}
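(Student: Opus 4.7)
The plan is to derive this statement as an immediate consequence of the classical Naimark dilation theorem combined with Proposition~\ref{P balanceado proyeccion base ortonormal}. The classical Naimark theorem already furnishes, for any Parseval frame $(f_k)_{k=1}^K$ in $\mathbb{H}_d$, a larger Hilbert space $\mathbb{H}_K \supseteq \mathbb{H}_d$ together with an orthonormal basis $(e_k)_{k=1}^K$ of $\mathbb{H}_K$ such that $f_k = \pi_{\mathbb{H}_d} e_k$. The new content of this theorem sits entirely in the balancedness condition, and Proposition~\ref{P balanceado proyeccion base ortonormal} translates balancedness of $(\pi_{\mathcal{W}} e_k)_{k=1}^K$ into the orthogonality condition $\sum_{k=1}^K e_k \in \mathcal{W}^{\perp}$ whenever $(e_k)$ is an ONB of an ambient space and $\mathcal{W}$ is a subspace; applying this with $\mathcal{W} = \mathbb{H}_d$ is precisely what is needed.

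For the ``only if'' direction I would start with a BPF $(f_k)_{k=1}^K$ in $\mathbb{H}_d$, invoke classical Naimark to obtain $\mathbb{H}_K$ and an ONB $(e_k)_{k=1}^K$ with $f_k = \pi_{\mathbb{H}_d} e_k$, and then observe that $(\pi_{\mathbb{H}_d} e_k)_{k=1}^K$ is balanced by hypothesis. The implication $(1) \Rightarrow (2)$ of Proposition~\ref{P balanceado proyeccion base ortonormal}, with $\mathcal{W} = \mathbb{H}_d$, immediately yields $\sum_{k=1}^K e_k \in \mathbb{H}_d^{\perp}$.

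For the ``if'' direction I would assume the setup described in the theorem: $\mathbb{H}_K \supseteq \mathbb{H}_d$, an ONB $(e_k)_{k=1}^K$ of $\mathbb{H}_K$ with $\sum_{k=1}^K e_k \in \mathbb{H}_d^{\perp}$, and $f_k = \pi_{\mathbb{H}_d} e_k$. The implication $(2) \Rightarrow (1)$ of Proposition~\ref{P balanceado proyeccion base ortonormal}, applied again with $\mathcal{W} = \mathbb{H}_d$, gives at once that $(f_k)_{k=1}^K$ is a BPF for $\mathbb{H}_d$.

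There is no real obstacle in the argument: both directions amount to matching the hypotheses of Naimark's theorem and Proposition~\ref{P balanceado proyeccion base ortonormal} with $\mathcal{W} = \mathbb{H}_d$. The only point that deserves a line of commentary is that the Parseval property of $(\pi_{\mathbb{H}_d} e_k)_{k=1}^K$ is automatic (it is part of Proposition~\ref{P balanceado proyeccion base ortonormal} and also of Naimark), so the entire novelty of the statement is the translation between the balancedness of the projected frame and the orthogonality relation on $\sum_{k=1}^K e_k$.
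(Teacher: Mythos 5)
Your proposal is correct and follows exactly the route the paper takes: the paper states this theorem as an immediate consequence of Naimark's theorem together with Proposition~\ref{P balanceado proyeccion base ortonormal} applied with $\mathcal{W}=\mathbb{H}_{d}$, which is precisely your argument in both directions.
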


\begin{proof}
Any Parseval frame $\mathcal{F}=(f_{k})_{k=1}^{K}$ for
$\mathbb{H}_{d}$ is unitary equivalent to
$(G_{\mathcal{F}}e_{k})_{k=1}^{K}$ where $(e_{k})_{k=1}^{K}$ is the
standard basis of $\mathbb{F}^{K}$ (see \cite[Theorem 2.2.]{Waldron
(2018)}). In this case, $G_{\mathcal{F}}$ is the orthogonal
projection onto $\textrm{im}(G_{\mathcal{F}})$ that has dimension
$d$. If $\mathcal{F}$ is also balanced then, by Proposition~\ref{P
equivalencias balanceado}, $e=\sum_{k=1}^{K}e_{k} \in
\textrm{im}(G_{\mathcal{F}})^{\perp}$. Let $U \in
L(\textrm{im}(G_{\mathcal{F}}),\mathbb{H}_{d})$ unitary such that
$f_{k}=UG_{\mathcal{F}}e_{k}$ for each $k = 1, \ldots, K$. Let
$\mathbb{H}_{K}$ such that $\mathbb{H}_{K} \supseteq\mathbb{H}_{d}$
and $\mathbb{H}_{K}=\mathbb{H}_{d}\oplus\mathbb{H}_{d}^{\perp}$. Let
$\widetilde{U}\in L(\mathbb{F}^{K}, \mathbb{H}_{K})$ unitary such
that $\widetilde{U}_{|\textrm{im}(G_{\mathcal{F}})}=U$. Let
$g_{k}=\widetilde{U}e_{k}$ for each $k = 1, \ldots, K$. Then
$\pi_{H_{d}}=\widetilde{U}G_{\mathcal{F}}\widetilde{U}^{*}$,
$(g_{k})_{k=1}^{K}$ is an orthonormal basis for $\mathbb{H}_{K}$,
$\sum_{k=1}^{K}g_{k}\in \mathbb{H}_{d}^{\perp}$ and
$f_{k}=\pi_{H_{d}}g_{k}$ for each $k = 1, \ldots, K$.
\end{proof}

\begin{rem}
The previous proof is constructive. The theorem follows also from
Proposition~\ref{P balanceado proyeccion base ortonormal} and
Naimark's theorem \cite{Casazza-Kutyniok (2012)}.
\end{rem}

\begin{thm}\label{T Naimark 2 para BPF}
A sequence $(f_{k})_{k=1}^{K}$ in $\mathbb{H}_{d}$ is a BPF for
$\mathbb{H}_{d}$ if and only if there is a larger Hilbert space
$\mathbb{H}_{K-1} \supseteq \mathbb{H}_{d}$ and a simplex frame
$(g_{k})_{k=1}^{K}$ for $\mathbb{H}_{K-1}$ so that
$f_{k}=\pi_{\mathbb{H}_{d}}g_{k}$ for each $k = 1, \ldots, K$.
\end{thm}

\begin{proof}
Let $\mathcal{F}=(f_{k})_{k=1}^{K}$ be a balanced Parseval frame for
$\mathbb{H}_{d}$. Similar to the proof of Theorem~\ref{T Naimark
para BPF}, $\mathcal{F}$ is unitary equivalent to
$(G_{\mathcal{F}}(e_{k}-\frac{1}{K}e))_{k=1}^{K}$ where
$(e_{k})_{k=1}^{K}$ is the standard basis of $\mathbb{F}^{K}$. The
sequence $(e_{k}-\frac{1}{K}e)_{k=1}^{K}$ is a simplex frame for
$\textrm{span}\{e\}^{\perp}$ that has dimension $K-1$. The rest
follows as in proof of Theorem~\ref{T Naimark para BPF}.
\end{proof}

Proposition~\ref{P proyeccion BTF} yields a decomposition of BUNTFs:
\begin{prop}\label{P BUNTF descomposicion ortogonal}
Let $(f_{k})_{k=1}^{K}$ be a sequence in $\mathbb{H}_{d}$ and $I
\subseteq \{1, \ldots, K\}$ such that $f_{k} \perp f_{l} = 0$ for $k
\in I$, $l \in I^{c}$. Let $\mathcal{W}:=\textrm{span}(f_{k})_{k \in
I}$. Then $(f_{k})_{k=1}^{K}$ is BUNTF for $\mathbb{H}_{d}$ if and
only if $(f_{k})_{k \in I}$ is a BUNTF for $\mathcal{W}$ and
$(f_{k})_{k \in I^{c}}$ is a BUNTF for $\mathcal{W}^{\perp}$.
\end{prop}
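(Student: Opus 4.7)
The key is the orthogonal splitting $\mathbb{H}_d = \mathcal{W} \oplus \mathcal{W}^\perp$ that the hypothesis forces: since $f_l \in \mathcal{W}^\perp$ for every $l \in I^c$, the frame operator $S_\mathcal{F}$, the balancing sum $T_\mathcal{F} e = \sum_k f_k$, and the unit-norm condition all decouple into an $I$-part and an $I^c$-part. So the proof reduces to checking each part separately on its own subspace.

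For the forward direction I would apply Proposition~\ref{P proyeccion BTF}(6) to the subspace $\mathcal{W}$: the projected family $\pi_\mathcal{W}\mathcal{F}$ is a BTF for $\mathcal{W}$ with the same constant $K/d$. Since $\pi_\mathcal{W} f_k = f_k$ for $k \in I$ and $\pi_\mathcal{W} f_k = 0$ for $k \in I^c$, the projected family is $(f_k)_{k\in I}$ padded by zeros, and the zeros contribute nothing to either the frame operator or to the balancing sum. So $(f_k)_{k\in I}$ is a balanced tight frame for $\mathcal{W}$, and it inherits unit norm from $\mathcal{F}$. The analogous projection onto $\mathcal{W}^\perp$ handles $(f_k)_{k\in I^c}$.

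For the reverse direction, unit norm and balancedness of the combined $\mathcal{F}$ are immediate, since $\sum_{k=1}^K f_k = \sum_{k\in I} f_k + \sum_{k\in I^c} f_k = 0 + 0$ and each $\|f_k\|=1$. The spanning condition follows from $\mathcal{W} \oplus \mathcal{W}^\perp = \mathbb{H}_d$ together with $\mathcal{W} = \operatorname{span}(f_k)_{k\in I}$ and $\operatorname{span}(f_k)_{k\in I^c} = \mathcal{W}^\perp$ (the latter being built into the hypothesis that the second subfamily is a frame for $\mathcal{W}^\perp$). The frame operator $S_\mathcal{F}$ is block-diagonal with respect to the orthogonal decomposition: for $f \in \mathcal{W}$ only the $I$-indexed summands of $\sum_k \langle f, f_k\rangle f_k$ survive, and by Proposition~\ref{P propiedades de marcos}(1) applied inside $\mathcal{W}$ this sum equals $(|I|/\dim\mathcal{W})f$; symmetrically on $\mathcal{W}^\perp$ the operator acts as $(|I^c|/\dim\mathcal{W}^\perp)I_{\mathcal{W}^\perp}$.

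The delicate step is tightness in the reverse direction: $S_\mathcal{F}$ is a scalar multiple of the identity on all of $\mathbb{H}_d$ exactly when the two block-eigenvalues coincide, which is the algebraic compatibility $|I|\,d = K\,\dim\mathcal{W}$; in that case both equal $K/d$, as required by Proposition~\ref{P propiedades de marcos}(1). This matching is automatic in the forward direction (each sub-BUNTF inherits the common tight constant $K/d$ from $\mathcal{F}$), but in the reverse direction it is a consistency condition that I expect to need explicit verification rather than being a free consequence of the two subfamilies being BUNTFs for their respective subspaces.
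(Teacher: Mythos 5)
Your forward direction is exactly the route the paper intends (the proposition is stated there as a consequence of Proposition~\ref{P proyeccion BTF}): project the $\frac{K}{d}$-tight frame onto $\mathcal{W}$ and onto $\mathcal{W}^{\perp}$, discard the zero vectors, and note that balancedness and unit norm are inherited; the tightness constant of each subfamily is then forced to equal $\frac{K}{d}=\frac{|I|}{\dim\mathcal{W}}=\frac{|I^{c}|}{\dim\mathcal{W}^{\perp}}$ by Proposition~\ref{P propiedades de marcos}(1). That half is complete and correct.

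The ``delicate step'' you flag in the reverse direction is a genuine gap, and you are right that it cannot be closed for free: the equality $\frac{|I|}{\dim\mathcal{W}}=\frac{|I^{c}|}{\dim\mathcal{W}^{\perp}}$ is an additional arithmetic hypothesis, not a consequence of the two subfamilies being BUNTFs for their respective spans. Concretely, in $\mathbb{R}^{2}$ let $I$ index $(e_{1},-e_{1})$ and $I^{c}$ index $(e_{2},e_{2},-e_{2},-e_{2})$: each subfamily is a BUNTF for its one-dimensional span, and the union is balanced, unit norm and spanning, but its frame operator is $\mathrm{diag}(2,4)$, so the union is not tight. Hence the reverse implication as stated fails; it needs the compatibility condition $\frac{|I|}{\dim\mathcal{W}}=\frac{|I^{c}|}{\dim\mathcal{W}^{\perp}}$ (equivalently $|I|\,d=K\dim\mathcal{W}$), exactly as the disjoint-union construction in Section~8 of the paper includes the hypothesis $\frac{K}{d_{1}}=\frac{L}{d_{2}}$. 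So your instinct was sound: this is not a verification you omitted but a condition missing from the statement itself (the same caveat applies to the frame-graph decomposition in Theorem~\ref{T BUNTF descomposision ciclos}), and your write-up should either add that hypothesis or exhibit such a counterexample rather than leave the step as ``to be verified.''
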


The \emph{frame graph} (or \emph{correlation network}) of a sequence
$(f_{k})_{k=1}^{K}$ in $\mathbb{H}_{d}$ is the graph with vertices
$(f_{k})_{k=1}^{K}$ and an edge between $f_{k}$ and $f_{k'}$, $k
\neq k'$, if and only if $\langle f_{k},f_{k'}\rangle \neq 0$
\cite{Waldron (2018)}. Each frame can be uniquely decomposed into a
union of frames for orthogonal subspaces, each corresponding to the
vertices of a connected component of the frame graph.
Proposition~\ref{P BUNTF descomposicion ortogonal} gives a
characterization of BUNTFs in terms of the cycles in its frame
graph:
\begin{thm}\label{T BUNTF descomposision ciclos}
A sequence $(f_{k})_{k=1}^{K}$ in $\mathbb{H}_{d}$ is a BUNTF if and
only if the vertices of each of the connected components in its
frame graph is a BUNTF for its span.
\end{thm}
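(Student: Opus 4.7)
The plan is to reduce the statement to repeated binary applications of Proposition~\ref{P BUNTF descomposicion ortogonal} by induction on the number $m$ of connected components of the frame graph. I would start by labelling these components $C_{1},\ldots,C_{m}$ and setting $\mathcal{W}_{i}:=\textrm{span}(f_{k})_{k\in C_{i}}$. By the very definition of the frame graph (an edge joins $f_{k}$ and $f_{k'}$ exactly when $\langle f_{k},f_{k'}\rangle\neq 0$), vertices from different components are orthogonal, so the subspaces $\mathcal{W}_{1},\ldots,\mathcal{W}_{m}$ are pairwise orthogonal, and consequently $\mathcal{W}_{1}\oplus\cdots\oplus\mathcal{W}_{m}=\textrm{span}(f_{k})_{k=1}^{K}$, which equals $\mathbb{H}_{d}$ whenever the full sequence is a frame for $\mathbb{H}_{d}$.

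For the forward implication, I would take $I=C_{1}$ and invoke Proposition~\ref{P BUNTF descomposicion ortogonal}: the orthogonality hypothesis holds automatically, so $(f_{k})_{k\in C_{1}}$ is a BUNTF for $\mathcal{W}_{1}$ and $(f_{k})_{k\in C_{1}^{c}}$ is a BUNTF for $\mathcal{W}_{1}^{\perp}=\mathcal{W}_{2}\oplus\cdots\oplus\mathcal{W}_{m}$. The frame graph of this smaller subsequence is exactly the induced subgraph on $C_{2}\cup\cdots\cup C_{m}$, with connected components $C_{2},\ldots,C_{m}$; iterating the same argument completes this direction.

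For the reverse implication, I would argue by induction on $m$. The base case $m=1$ is trivial. For the inductive step, the assumption gives that $(f_{k})_{k\in C_{m}}$ is a BUNTF for $\mathcal{W}_{m}$, and the induction hypothesis applied to the subsequence indexed by $C_{1}\cup\cdots\cup C_{m-1}$ (whose frame graph is the induced subgraph with components $C_{1},\ldots,C_{m-1}$) yields that it is a BUNTF for $\mathcal{W}_{1}\oplus\cdots\oplus\mathcal{W}_{m-1}$. A final application of Proposition~\ref{P BUNTF descomposicion ortogonal}, now with $I=C_{m}$ inside the ambient space $\mathcal{W}_{1}\oplus\cdots\oplus\mathcal{W}_{m}=\mathbb{H}_{d}$, glues the two pieces into a BUNTF for $\mathbb{H}_{d}$.

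The main point to watch will be confirming that the orthogonality hypothesis of Proposition~\ref{P BUNTF descomposicion ortogonal} remains valid at every level of the recursion; this is immediate from the graph-theoretic characterisation of connected components, since any two vertices in different components have zero inner product. Once this is noted, nothing beyond a routine induction on $m$ is required.
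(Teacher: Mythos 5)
Your strategy --- induct on the number of connected components and apply Proposition~\ref{P BUNTF descomposicion ortogonal} at each stage --- is exactly what the paper intends: the theorem is presented there as an immediate consequence of that proposition, with the induction left implicit. Your forward direction, and the observation that vertices in distinct components are orthogonal so that the spans $\mathcal{W}_{1},\ldots,\mathcal{W}_{m}$ are pairwise orthogonal, are fine.

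There is, however, a genuine gap in the reverse direction, which you inherit by taking the ``if'' part of Proposition~\ref{P BUNTF descomposicion ortogonal} at face value. When you glue $(f_{k})_{k\in C_{m}}$ (a BUNTF for $\mathcal{W}_{m}$, hence tight with constant $|C_{m}|/\dim\mathcal{W}_{m}$) to the subsequence indexed by $C_{1}\cup\cdots\cup C_{m-1}$ (tight for $\mathcal{W}_{m}^{\perp}$ with its own constant), the frame operator of the union is block diagonal with blocks $\frac{|C_{i}|}{\dim\mathcal{W}_{i}}I_{\mathcal{W}_{i}}$, and this is a multiple of the identity only if all these ratios coincide. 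The paper itself makes the analogous condition explicit in its later disjoint-union proposition (the hypothesis $\frac{K}{d_{1}}=\frac{L}{d_{2}}$), but it is absent both from Proposition~\ref{P BUNTF descomposicion ortogonal} and from the theorem, and your proof does not supply it. Concretely, the five unit vectors $(\cos(2\pi k/3),\sin(2\pi k/3),0)^{t}$, $k=1,2,3$, together with $(0,0,1)^{t}$ and $(0,0,-1)^{t}$, form a balanced unit-norm sequence in $\mathbb{R}^{3}$ whose frame graph has two connected components, each a BUNTF for its span (with constants $3/2$ and $2$ respectively), yet the union has frame operator $\mathrm{diag}(3/2,3/2,2)$ and is not tight. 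So your final gluing step ``glues the two pieces into a BUNTF'' does not go through as written; the induction is salvageable only if one adds the hypothesis that the ratios $|C_{i}|/\dim\mathcal{W}_{i}$ agree across components (a condition that does hold automatically in the forward direction, where each ratio must equal $K/d$).
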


%%%%%%%%%%%%%%%%%%%%%%%%%%%%%%%%%%%%%%%%%%%%%%%%%%%%%%%%%%%%%%%%%%%%%%%%%%%%%%%%%%%%%%%%
%%%%%%%%%%%%%%%%%%%%%%%%%%%%%%%%%%%%%%%%%%%%%%%%%%%%%%%%%%%%%%%%%%%%%%%%%%%%%%%%%%%%%%%%
\subsection{Duals of a balanced frame}

As was explained in section 3.4, in order to obtain the duals of a
balanced frame it is sufficient to consider the balanced ones.
Proposition~\ref{P caracterizacion de duales} leads to different
characterizations of balanced dual frames of a given BF:
\begin{prop}\label{P equivalencias duales balanceados}
Let $\mathcal{F}=(f_{k})_{k=1}^{K}$ be a BF for $\mathbb{H}_{d}$ and
$\mathcal{G}=(g_{k})_{k=1}^{K}$ be a sequence in $\mathbb{H}_{d}$.
Then the following are equivalent:
\begin{enumerate}
  \item $\mathcal{G}$ is a balanced dual frame of $\mathcal{F}$.
  \item $T_{\mathcal{G}}=S_{\mathcal{F}}^{-1}T_{\mathcal{F}}+R$ where $R\in \mathcal{L}(\mathbb{F}^{K}, \mathbb{H}_{d})$, $RT_{\mathcal{F}}^{*}=0$ and $Re=0$.
  \item $(g_{k})_{k=1}^{K}=(S_{\mathcal{F}}^{-1}f_{k}+r_{k})_{k=1}^{K}$ with $(r_{k})_{k=1}^{K}$ in $\mathbb{H}_{d}$ such that
$\sum_{k=1}^{K}\langle f, f_{k} \rangle r_{k} = 0$ for each $f \in
\mathbb{H}_{d}$ and $\sum_{k=1}^{K} r_{k} =0$.
  \item $T_{\mathcal{G}}=S_{\mathcal{F}}^{-1}T_{\mathcal{F}}+R$, where
$R\in \mathcal{L}(\mathbb{F}^{K}, \mathbb{H}_{d})$ and
$\textrm{span}\{e\} \oplus \textrm{im}(T_{\mathcal{F}}^{*}) \subset
\textrm{ker}(R)$.
  \item $T_{\mathcal{G}}=S_{\mathcal{F}}^{-1}T_{\mathcal{F}}+R$, where
$R\in \mathcal{L}(\mathbb{F}^{K}, \mathbb{H}_{d})$ and
$\textrm{im}(R^{*}) \oplus \textrm{im}(T_{\mathcal{F}}^{*}) \subset
\textrm{span}\{e\}^{\perp}$.
  \item $T_{\mathcal{G}}=S_{\mathcal{F}}^{-1}T_{\mathcal{F}}+W(I-T_{\mathcal{F}}^{*}S_{\mathcal{F}}^{-1}T_{\mathcal{F}})$, where $W\in \mathcal{L}(\mathbb{F}^{K},
\mathbb{H}_{d})$ and $We=0$.
  \item $(g_{k})_{k=1}^{K}=(S_{\mathcal{F}}^{-1}f_{k}+h_{k}+\sum_{l=1}^{K}
\langle S_{\mathcal{F}}^{-1}f_{k}, f_{l} \rangle h_{l})_{k=1}^{K}$
with $(h_{k})_{k=1}^{K}$ in $\mathbb{H}_{d}$ such that
$\sum_{k=1}^{K} h_{k} =0$.
\end{enumerate}
\end{prop}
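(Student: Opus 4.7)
The plan is to reduce each equivalence to the general characterization of dual frames in Proposition~\ref{P caracterizacion de duales}, by adjoining the single extra requirement that $\mathcal{G}$ be balanced, which by Proposition~\ref{P equivalencias balanceado} is the condition $T_{\mathcal{G}}e=0$. The driving observation, used repeatedly, is that balancedness of $\mathcal{F}$ means $T_{\mathcal{F}}e=0$, so $S_{\mathcal{F}}^{-1}T_{\mathcal{F}}e=0$ and $e\in\textrm{ker}(T_{\mathcal{F}})=\textrm{im}(T_{\mathcal{F}}^{*})^{\perp}$.

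First I would establish $(1)\Leftrightarrow(2)$: by Proposition~\ref{P caracterizacion de duales}(2), $\mathcal{G}$ is a dual iff $T_{\mathcal{G}}=S_{\mathcal{F}}^{-1}T_{\mathcal{F}}+R$ with $RT_{\mathcal{F}}^{*}=0$, and evaluating at $e$ gives $T_{\mathcal{G}}e=Re$, so the additional balancedness of $\mathcal{G}$ is precisely $Re=0$. The step $(2)\Leftrightarrow(3)$ is then a translation to columns: with $r_{k}:=Re_{k}$, the operator equation $RT_{\mathcal{F}}^{*}=0$ reads $\sum_{k}\langle f,f_{k}\rangle r_{k}=0$ for all $f\in\mathbb{H}_{d}$, the equation $Re=0$ reads $\sum_{k}r_{k}=0$, and $g_{k}=T_{\mathcal{G}}e_{k}=S_{\mathcal{F}}^{-1}f_{k}+r_{k}$.

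Next I would repackage the pair of conditions $RT_{\mathcal{F}}^{*}=0$ and $Re=0$ as the single subspace inclusion $\textrm{span}\{e\}+\textrm{im}(T_{\mathcal{F}}^{*})\subset\textrm{ker}(R)$, the sum being automatically direct because $e\perp\textrm{im}(T_{\mathcal{F}}^{*})$; this yields $(2)\Leftrightarrow(4)$. Passing to orthogonal complements via $\textrm{im}(R^{*})=\textrm{ker}(R)^{\perp}$, and using that $\textrm{im}(T_{\mathcal{F}}^{*})\subset\textrm{span}\{e\}^{\perp}$ is itself a restatement of $T_{\mathcal{F}}e=0$, produces $(4)\Leftrightarrow(5)$ provided the $\oplus$ is read as an orthogonal direct sum, so that $\textrm{im}(R^{*})\perp\textrm{im}(T_{\mathcal{F}}^{*})$ encodes $T_{\mathcal{F}}R^{*}=0$, equivalently $RT_{\mathcal{F}}^{*}=0$.

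For $(1)\Leftrightarrow(6)$ I would use the other form of Proposition~\ref{P caracterizacion de duales}: evaluating the representation $T_{\mathcal{G}}=S_{\mathcal{F}}^{-1}T_{\mathcal{F}}+W(I-T_{\mathcal{F}}^{*}S_{\mathcal{F}}^{-1}T_{\mathcal{F}})$ at $e$ and using $T_{\mathcal{F}}e=0$ collapses it to $T_{\mathcal{G}}e=We$, so balancedness becomes $We=0$. Finally $(6)\Leftrightarrow(7)$ is a componentwise expansion of the operator formula with $h_{k}:=We_{k}$, noting $\sum_{k}h_{k}=We$. I expect the only subtle point of the whole argument to be the correct reading of the $\oplus$ in $(4)$ and $(5)$ as orthogonal direct sums, which is forced by $e\in\textrm{ker}(T_{\mathcal{F}})$; beyond that, the proof is careful bookkeeping between the operator formulation and the vector formulation.
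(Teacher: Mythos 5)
Your overall strategy is exactly the one the paper intends: the paper gives no written proof of this proposition, merely stating that Proposition~\ref{P caracterizacion de duales} ``leads to'' these characterizations, and your reduction of every item to that proposition together with the two balancedness identities $T_{\mathcal{F}}e=0$ and $T_{\mathcal{G}}e=0$ is the right way to fill that in. Items $(1)$--$(6)$ are handled correctly, and your remark that the $\oplus$ in $(5)$ must be read as an \emph{orthogonal} direct sum is a genuine and necessary observation: a merely direct (non-orthogonal) sum $\textrm{im}(R^{*})\oplus\textrm{im}(T_{\mathcal{F}}^{*})$ would not encode $RT_{\mathcal{F}}^{*}=0$, so under the weaker reading $(5)$ would fail to imply $(2)$.

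The one step that does not go through as you describe it is $(6)\Leftrightarrow(7)$. The componentwise expansion of $T_{\mathcal{G}}e_{k}=S_{\mathcal{F}}^{-1}T_{\mathcal{F}}e_{k}+W(I-T_{\mathcal{F}}^{*}S_{\mathcal{F}}^{-1}T_{\mathcal{F}})e_{k}$ with $h_{k}:=We_{k}$ gives $g_{k}=S_{\mathcal{F}}^{-1}f_{k}+h_{k}-\sum_{l=1}^{K}\langle S_{\mathcal{F}}^{-1}f_{k},f_{l}\rangle h_{l}$, with a \emph{minus} sign in front of the sum, whereas item $(7)$ as printed has a plus sign. The two are not interchangeable: writing $P=T_{\mathcal{F}}^{*}S_{\mathcal{F}}^{-1}T_{\mathcal{F}}$ (the orthogonal projection onto $\textrm{im}(T_{\mathcal{F}}^{*})$), the plus-sign formula corresponds to $R=H(I+P)$ with $He=0$, and since $(I+P)T_{\mathcal{F}}^{*}=2T_{\mathcal{F}}^{*}$, such a $\mathcal{G}$ is a dual only when $HT_{\mathcal{F}}^{*}=0$, which does not follow from $\sum_{k}h_{k}=0$. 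Concretely, for $\mathcal{F}=(1,-1)$ in $\mathbb{R}$ and $h_{1}=1$, $h_{2}=-1$, the plus-sign formula yields $\mathcal{G}=(5/2,-5/2)$, which is not a dual of $\mathcal{F}$. So either $(7)$ carries a sign typo (the minus-sign version is correct and does follow from $(6)$ exactly as you say), or your claimed ``componentwise expansion'' proves a different statement than the one printed. In a careful write-up you must either prove the corrected minus-sign statement and flag the typo, or explain why the printed version holds --- and it does not.
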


\begin{cor}\label{C rango R}
Let $\mathcal{F}=(f_{k})_{k=1}^{K}$ be a BF for $\mathbb{H}_{d}$.
Let $\mathcal{G}$ be a balanced dual frame of $\mathcal{F}$ with
$T_{\mathcal{G}}=S_{\mathcal{F}}^{-1}T_{\mathcal{F}}+R$, where $R\in
\mathcal{L}(\mathbb{F}^{K}, \mathbb{H}_{d})$. Then $\textrm{rank}(R)
\leq K-d-1$.
\end{cor}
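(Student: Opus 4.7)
The plan is to exploit characterization (4) in Proposition~\ref{P equivalencias duales balanceados}, which asserts that $\text{span}\{e\} \oplus \text{im}(T_{\mathcal{F}}^{*}) \subset \text{ker}(R)$. Once this is available, the proof reduces to a dimension count via the rank-nullity theorem applied to $R \in \mathcal{L}(\mathbb{F}^{K}, \mathbb{H}_{d})$.

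First, I would verify that $\text{span}\{e\} + \text{im}(T_{\mathcal{F}}^{*})$ is genuinely a direct sum of dimension $d+1$. Since $\mathcal{F}$ is a frame, $T_{\mathcal{F}}^{*}$ is injective, so $\dim \text{im}(T_{\mathcal{F}}^{*}) = d$. Since $\mathcal{F}$ is balanced, Proposition~\ref{P equivalencias balanceado} gives $T_{\mathcal{F}}e = 0$, which means $e \in \text{ker}(T_{\mathcal{F}}) = \text{im}(T_{\mathcal{F}}^{*})^{\perp}$. As $e \neq 0$, this forces $\text{span}\{e\} \cap \text{im}(T_{\mathcal{F}}^{*}) = \{0\}$, so the sum is direct and has total dimension $d+1$.

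Second, since this $(d+1)$-dimensional subspace is contained in $\text{ker}(R)$, rank-nullity gives $\text{rank}(R) = K - \dim \text{ker}(R) \leq K - (d+1) = K - d - 1$, as required.

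I do not foresee a genuine obstacle: the substance of the argument was already absorbed into Proposition~\ref{P equivalencias duales balanceados}, and the corollary is essentially a dimension-counting consequence of it. The only point meriting care is the independence of $\text{span}\{e\}$ and $\text{im}(T_{\mathcal{F}}^{*})$, and this is exactly where the balancedness hypothesis enters, via the orthogonality $\text{ker}(T_{\mathcal{F}}) \perp \text{im}(T_{\mathcal{F}}^{*})$.
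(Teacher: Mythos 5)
Your proof is correct and follows exactly the route the paper intends: the corollary is stated as an immediate consequence of item (4) of Proposition~\ref{P equivalencias duales balanceados}, and your argument supplies the implicit dimension count, correctly using injectivity of $T_{\mathcal{F}}^{*}$ and the balancedness condition $e\in\textrm{ker}(T_{\mathcal{F}})=\textrm{im}(T_{\mathcal{F}}^{*})^{\perp}$ to see that $\textrm{ker}(R)$ contains a subspace of dimension $d+1$. No gaps.
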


As a consequence of Proposition~\ref{P equivalencias duales
balanceados} we also have the following uniqueness result:
\begin{cor}\label{C BF K d+1 unico dual}
Let $\mathcal{F}=(f_{k})_{k=1}^{K}$ be a BF for $\mathbb{H}_{d}$.
Then $K=d+1$ if and only if $(S_{\mathcal{F}}^{-1}f_{k})_{k=1}^{K}$
is the unique balanced dual frame of $(f_{k})_{k=1}^{K}$.
\end{cor}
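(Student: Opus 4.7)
The plan is to invoke Proposition~\ref{P equivalencias duales balanceados}(2), which parameterizes the balanced duals $\mathcal{G}$ of $\mathcal{F}$ as $T_{\mathcal{G}}=S_{\mathcal{F}}^{-1}T_{\mathcal{F}}+R$ with $R\in\mathcal{L}(\mathbb{F}^{K},\mathbb{H}_{d})$ subject to $RT_{\mathcal{F}}^{*}=0$ and $Re=0$. Since the canonical dual corresponds to $R=0$ and is itself balanced (by Corollary~\ref{C BF F SinvF Sinv12F}), the question reduces to: for which $K$ is $R=0$ the only admissible choice?

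The two constraints are equivalent to $\textrm{span}\{e\}+\textrm{im}(T_{\mathcal{F}}^{*})\subseteq\ker(R)$. The key observation is that balancedness of $\mathcal{F}$ gives $e\in\ker(T_{\mathcal{F}})=\textrm{im}(T_{\mathcal{F}}^{*})^{\perp}$, so $\textrm{span}\{e\}$ and $\textrm{im}(T_{\mathcal{F}}^{*})$ are orthogonal subspaces of $\mathbb{F}^{K}$. Since $\dim\textrm{im}(T_{\mathcal{F}}^{*})=d$ (as $\mathcal{F}$ is a frame) and $e\neq 0$, their direct sum has dimension $d+1$; a byproduct of this count is that every balanced frame automatically satisfies $K\geq d+1$. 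The forward direction then follows immediately from Corollary~\ref{C rango R}: if $K=d+1$, then $\textrm{rank}(R)\leq K-d-1=0$ forces $R=0$, so the canonical dual is the unique balanced dual.

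For the converse, suppose $K>d+1$. Then the orthogonal complement $(\textrm{span}\{e\}+\textrm{im}(T_{\mathcal{F}}^{*}))^{\perp}$ has dimension $K-d-1\geq 1$. Pick any nonzero $v$ in this complement and any nonzero $h\in\mathbb{H}_{d}$, and define $R\in\mathcal{L}(\mathbb{F}^{K},\mathbb{H}_{d})$ by $Rx=\langle x,v\rangle h$. By construction, $R$ annihilates both $e$ and $\textrm{im}(T_{\mathcal{F}}^{*})$, yet $Rv=\|v\|^{2}h\neq 0$, producing a balanced dual distinct from the canonical one. I do not expect a serious obstacle; the only point that requires real care is confirming that $e\perp\textrm{im}(T_{\mathcal{F}}^{*})$ genuinely uses balancedness, which in turn guarantees that the direct sum has the claimed dimension $d+1$ and thus makes the rank bound in Corollary~\ref{C rango R} sharp exactly when $K=d+1$.
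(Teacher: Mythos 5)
Your proposal is correct and follows exactly the route the paper intends: the corollary is stated as a consequence of Proposition~\ref{P equivalencias duales balanceados} together with the rank bound $\textrm{rank}(R)\leq K-d-1$ of Corollary~\ref{C rango R}, which is precisely the parameterization and dimension count you carry out (including the explicit rank-one $R$ witnessing non-uniqueness when $K>d+1$). Nothing further is needed.
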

The above corollary suggests that in the family of BFs, those BFs
with $K=d+1$ can be seen as the analogous to the bases in the family
of frames.

The existence of distinct types of dual frames of a given Parseval
frame is studied in \cite{Casazza-Kovacevic (2003)}. In particular,
it is shown that a Parseval frame is itself its unique Parseval dual
frame. They also consider for a given Parseval frame its tight dual
frames. Here we are interested in balanced tight dual frames of a
given balanced Parseval frame.
\begin{thm}
Let $\mathcal{F}=(f_{k})_{k=1}^{K}$ be a BPF for $\mathbb{H}_{d}$.
If $K \leq 2d$ the unique balanced tight dual frame of $\mathcal{F}$
is $\mathcal{F}$. If $K > 2d$ there exist infinite non unitary
equivalent balanced tight dual frames of $\mathcal{F}$.
\end{thm}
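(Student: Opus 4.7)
The plan is to use Proposition~\ref{P equivalencias duales balanceados} to parametrize the balanced duals of $\mathcal{F}$. Since $\mathcal{F}$ is a BPF, $S_{\mathcal{F}}=I$, so the balanced duals are exactly the sequences $\mathcal{G}$ with $T_{\mathcal{G}}=T_{\mathcal{F}}+R$, where $R\in\mathcal{L}(\mathbb{F}^{K},\mathbb{H}_{d})$ satisfies $RT_{\mathcal{F}}^{*}=0$ and $Re=0$. Expanding, $T_{\mathcal{G}}T_{\mathcal{G}}^{*}=(T_{\mathcal{F}}+R)(T_{\mathcal{F}}^{*}+R^{*})=I+T_{\mathcal{F}}R^{*}+RT_{\mathcal{F}}^{*}+RR^{*}=I+RR^{*}$, because $RT_{\mathcal{F}}^{*}=0$ forces $T_{\mathcal{F}}R^{*}=0$ as well. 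So $\mathcal{G}$ is $\alpha$-tight if and only if $RR^{*}=(\alpha-1)I$, which already requires $\alpha\geq 1$ by positivity.

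Separating into cases, if $\alpha=1$ then $RR^{*}=0$, so $R=0$ and $\mathcal{G}=\mathcal{F}$. If $\alpha>1$ then $RR^{*}$ is invertible on $\mathbb{H}_{d}$, whence $R^{*}$ is injective and $\textrm{rank}(R)=d$. Corollary~\ref{C rango R} bounds $\textrm{rank}(R)\leq K-d-1$, so a balanced tight dual distinct from $\mathcal{F}$ forces $K-d-1\geq d$, i.e., $K>2d$. Consequently, if $K\leq 2d$ the only balanced tight dual is $\mathcal{F}$ itself.

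For $K>2d$ I would exhibit a one-parameter family of balanced tight duals. Set $V=\textrm{span}\{e\}\oplus\textrm{im}(T_{\mathcal{F}}^{*})$; this is an orthogonal direct sum because $T_{\mathcal{F}}e=0$ gives $e\in\textrm{ker}(T_{\mathcal{F}})=\textrm{im}(T_{\mathcal{F}}^{*})^{\perp}$, so $\dim V=d+1$ and $\dim V^{\perp}=K-d-1\geq d$. For each $\alpha>1$, choose a coisometry $C:V^{\perp}\to\mathbb{H}_{d}$ (which exists since $\dim V^{\perp}\geq d$: map an orthonormal basis of $V^\perp$ to an orthonormal basis of $\mathbb{H}_d$ followed by zeros), and define $R$ to be $\sqrt{\alpha-1}\,C$ on $V^{\perp}$ and $0$ on $V$. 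Then $R$ satisfies $RT_{\mathcal{F}}^{*}=0$, $Re=0$ and $RR^{*}=(\alpha-1)I$, so $\mathcal{G}_{\alpha}$ defined by $T_{\mathcal{G}_{\alpha}}=T_{\mathcal{F}}+R$ is a balanced $\alpha$-tight dual of $\mathcal{F}$.

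Finally, any unitary $U$ on $\mathbb{H}_{d}$ sends an $\alpha$-tight frame to an $\alpha$-tight frame, since $S_{U\mathcal{G}}=US_{\mathcal{G}}U^{*}$. Hence $\mathcal{G}_{\alpha_{1}}$ and $\mathcal{G}_{\alpha_{2}}$ cannot be unitary equivalent when $\alpha_{1}\neq\alpha_{2}$, and $\{\mathcal{G}_{\alpha}\}_{\alpha>1}$ supplies uncountably many pairwise non-unitary-equivalent balanced tight duals. The only technical point to be careful with is the construction of $R$, namely checking that the prescribed vanishing on $V$ is compatible with the identity $RR^{*}=(\alpha-1)I$; this reduces to the elementary observation that any subspace of dimension at least $d$ admits a coisometry onto $\mathbb{H}_{d}$.
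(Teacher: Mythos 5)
Your proof is correct and follows essentially the same route as the paper: parametrizing balanced duals as $T_{\mathcal{F}}+R$ with $RT_{\mathcal{F}}^{*}=0$ and $Re=0$, reducing tightness to $RR^{*}=(\alpha-1)I$, invoking the rank bound $\textrm{rank}(R)\leq K-d-1$, and building examples from the $(K-d-1)$-dimensional space $(\textrm{span}\{e\}\oplus\textrm{im}(T_{\mathcal{F}}^{*}))^{\perp}$, which is exactly where the paper takes its equal-norm orthogonal vectors $s_{1},\ldots,s_{d}$. The only (harmless) variation is that you distinguish the duals by their tightness constants via $S_{U\mathcal{G}}=US_{\mathcal{G}}U^{*}$, whereas the paper compares Gram matrices; your version is clean and correct.
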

\begin{proof}
Let $\mathcal{G}$ be a balanced $\alpha$-tight dual frame of
$\mathcal{F}$. Assume $T_{\mathcal{G}}=T_{\mathcal{F}}+R$ with $R
\in \mathcal{L}(\mathbb{F}^{K},\mathbb{H}_{d})$ such that
$RT_{\mathcal{F}}^{*}=0$ and $Re=0$. Then

\centerline{$\alpha I_{\mathbb{H}_{d}} = I_{\mathbb{H}_{d}} +
RR^{*}$.}

Thus, $\alpha \neq 1$ (indeed $\alpha > 1$) if and only if
$\textrm{rank}(R)=d$, and $\alpha = 1$ if and only if $R=0$.

If $\textrm{rank}(R)=d$, by Corollary~\ref{C rango R}, $K \geq
2d+1$. So, if $K \leq 2d$ the unique balanced tight dual frame of
$\mathcal{F}$ is $\mathcal{F}$.

Let $\{e_{1}, \ldots, e_{d}\}$ be any orthonormal basis of
$\mathbb{H}_{d}$. If $K > 2d$, we can consider any set of equal norm
orthogonal vectors $\{s_{1}, \ldots, s_{d}\} \subset
(\textrm{span}\{e\} \oplus
\textrm{im}(T_{\mathcal{F}}^{*}))^{\perp}$. Let
$\mathcal{R}=(r_{k})_{k=1}^{K}$ with
$r_{k}=\sum_{i=1}^{d}\overline{s_{i}(k)}e_{i}$ for $k = 1, \ldots,
K$. Then

\centerline{$T_{\mathcal{R}}e=\sum_{k=1}^{K}r_{k}=\sum_{i=1}^{d}\sum_{k=1}^{K}\overline{s_{i}(k)}e_{i}=0$.}

\noindent If $f \in \mathbb{H}_{d}$,

\centerline{$T_{\mathcal{R}}T_{\mathcal{F}}^{*}f=\sum_{k=1}^{K}\langle
f, f_{k}\rangle r_{k}= \sum_{i=1}^{d}\sum_{k=1}^{K}\langle f,
f_{k}\rangle\overline{s_{i}(k)}e_{i}=0$}

\noindent and

\centerline{$T_{\mathcal{R}}T_{\mathcal{R}}^{*}f=\sum_{k=1}^{K}\langle
f, r_{k}\rangle r_{k}= \sum_{i,i'=1}^{d}\langle f, e_{i}\rangle
e_{i'}\sum_{k=1}^{K}s_{i}(k)\overline{s_{i'}(k)}=\rho
\sum_{i=1}^{d}\langle f, e_{i}\rangle e_{i}=\rho f$}

\noindent where $\rho=||s_{i}||^{2}$ for $i = 1, \ldots, d$. Thus
$(f_{k}+r_{k})_{k=1}^{K}$ is a balanced $(\rho+1)$-tight dual frame
of $\mathcal{F}$ with Gram matrix $G_{\mathcal{F}}+\rho I \neq
G_{\mathcal{F}}$. This shows that if $K
> 2d$ there exist infinite non unitary equivalent balanced tight dual frames of
$\mathcal{F}$.
\end{proof}

%%%%%%%%%%%%%%%%%%%%%%%%%%%%%%%%%%%%%%%%%%%%%%%%%%%%%%%
\section{The closest balanced frame to a given frame}

A natural question that arises is: Given a frame, is there a
balanced frame that is closest to it in some norm and how do we find
it? The first step to answer this question is the following theorem,
that describes the $\ell^{1}$-norm closest balanced sequence to a
given sequence of elements in $\mathbb{H}_{d}$. We consider the
$\ell^{1}$-norm of a sequence in $\mathbb{H}_{d}$ given by
$||(f_{k})_{k=1}^{K}||_{1}:=\sum_{k=1}^{K}||f_{k}||$.
\begin{thm}\label{T min l1}
Let $(f_{k})_{k=1}^{K}$ be a sequence in $\mathbb{H}_{d}$. Then

\centerline{$||\sum_{k=1}^{K}f_{k}||=\inf\{\sum_{k=1}^{K}||f_{k}-g_{k}||:(g_{k})_{k=1}^{K}$
{\textrm is a balanced sequence in} $\mathbb{H}_{d}\}$,}

\noindent and the infimum is attained for the sequences of the form
$(f_{k}-p_{k}\sum_{l=1}^{K}f_{l})_{k=1}^{K}$, where $0 < p_{k} < 1$
for each $k = 1, \ldots, K$ and $\sum_{k=1}^{K}p_{k}=1$.
\end{thm}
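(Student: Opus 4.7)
The approach is to translate the problem into a single triangle-inequality estimate. Set $s := \sum_{k=1}^{K} f_{k}$. For any balanced sequence $(g_{k})_{k=1}^{K}$ in $\mathbb{H}_{d}$, the vectors $h_{k} := f_{k} - g_{k}$ satisfy $\sum_{k=1}^{K} h_{k} = s - 0 = s$, so the triangle inequality immediately gives
\[
\sum_{k=1}^{K} \|f_{k} - g_{k}\| \;=\; \sum_{k=1}^{K} \|h_{k}\| \;\geq\; \Big\|\sum_{k=1}^{K} h_{k}\Big\| \;=\; \|s\|.
\]
Taking the infimum over balanced $(g_{k})$ yields the lower bound $\|s\|$ on the right-hand side of the claimed identity.

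For the reverse inequality and the attainment statement, the plan is simply to exhibit a concrete minimizer of the required form. Given $p_{k} \in (0,1)$ with $\sum_{k=1}^{K} p_{k} = 1$, define $g_{k} := f_{k} - p_{k}\, s$. Then $\sum_{k=1}^{K} g_{k} = s - \bigl(\sum_{k=1}^{K} p_{k}\bigr)s = 0$, so $(g_{k})_{k=1}^{K}$ is balanced, and
\[
\sum_{k=1}^{K} \|f_{k} - g_{k}\| \;=\; \sum_{k=1}^{K} \|p_{k}\, s\| \;=\; \Big(\sum_{k=1}^{K} p_{k}\Big)\|s\| \;=\; \|s\|.
\]
Combined with the lower bound from step one, this shows both that the infimum equals $\|s\|$ and that it is attained on every sequence of the stated form.

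There is essentially no obstacle here: the statement is a clean triangle inequality together with a construction that forces equality by making all the difference vectors $f_{k} - g_{k}$ parallel nonnegative multiples of the single vector $s$ (which is exactly the equality case of the triangle inequality in a Hilbert space). The only minor subtlety worth mentioning is the degenerate case $s = 0$: then $(f_{k})$ is itself balanced, the infimum is $0$, and the proposed family $(f_{k} - p_{k} s)_{k=1}^{K}$ reduces to $(f_{k})_{k=1}^{K}$, so the formula remains correct.
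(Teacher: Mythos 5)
Your argument is correct and, for the parts it covers, is the same as the paper's: the lower bound is the triangle inequality applied to $\sum_{k}(f_{k}-g_{k})=s$, and the matching upper bound is the explicit computation for $g_{k}=f_{k}-p_{k}s$. What you omit is the second half of the paper's proof, which establishes the converse: \emph{every} balanced minimizer is of the stated form. The paper does this by noting that a minimizer forces $\sum_{k}\|f_{k}-g_{k}\|=\|\sum_{k}(f_{k}-g_{k})\|$, invoking the equality case of the triangle inequality (all the differences $f_{k}-g_{k}$ must be nonnegative multiples of a common vector, necessarily of $s$), and then reading off the weights $p_{k}$ from the resulting proportionality constants. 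You gesture at precisely this equality case in your closing remark but never carry out the argument. Whether this counts as a gap depends on how one reads ``the infimum is attained for the sequences of the form\ldots''; the paper clearly intends the full classification of minimizers, and that converse is what is used later in Theorem~\ref{T no existencia mas cercano}(1) to assert that the $\ell^{1}$-closest balanced frames are \emph{exactly} the frames $(f_{k}-p_{k}\sum_{l}f_{l})_{k=1}^{K}$ with $(p_{1},\ldots,p_{K})^{t}\notin\textrm{im}(T_{\mathcal{F}}^{*})$. As written, your proposal pins down the value of the infimum and exhibits minimizers, but does not show these are the only ones; two more lines on the equality case of the triangle inequality would close this.
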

\begin{proof}
Let $(g_{k})_{k=1}^{K}$ be a balanced sequence in $\mathbb{H}_{d}$,
and $0 < p_{k} < 1$ for each $k = 1, \ldots, K$ with
$\sum_{k=1}^{K}p_{k}=1$. We have,

$\sum_{k=1}^{K}||f_{k}-(f_{k}-p_{k}\sum_{l=1}^{K}f_{l})||=||\sum_{k=1}^{K}f_{k}||=||\sum_{k=1}^{K}f_{k}-\sum_{k=1}^{K}g_{k}||\leq\sum_{k=1}^{K}||f_{k}-g_{k}||$.

Now suppose that $(g_{k})_{k=1}^{K}$ is a balanced sequence in
$\mathbb{H}_{d}$ and
$\sum_{k=1}^{K}||f_{k}-g_{k}||=||\sum_{k=1}^{K}f_{k}||$. Then
$\sum_{k=1}^{K}||f_{k}-g_{k}||=||\sum_{k=1}^{K}(f_{k}-g_{k})||$, and
this happens if and only if there exist positive real numbers
$c_{1}, \ldots, c_{K-1}$ such that
$f_{k+1}-g_{k+1}=c_{k}(f_{1}-g_{1})$ for each $k=1, \ldots, K-1$.
Setting $p_{1}=\frac{1}{1 + c_{1} + \ldots + c_{K-1}}$ and
$p_{k+1}=\frac{c_{k}}{1 + c_{1} + \ldots + c_{K-1}}$ for each $k=1,
\ldots, K-1$ we have $\sum_{k=1}^{K}p_{k}=1$, $0 < p_{k} < 1$ and
$g_{k}=f_{k}-p_{k}\sum_{l=1}^{K}f_{l}$ for each $k = 1, \ldots, K$.
\end{proof}

Now we analyze the problem for the $\ell^{2}$-norm. Given a sequence
$(f_{k})_{k=1}^{K}$ in $\mathbb{H}_{d}$, the next theorem asserts
that $(f_{k}-\frac{1}{K}\sum_{l=1}^{K}f_{l})_{k=1}^{K}$ is the
balanced sequence in $\mathbb{H}_{d}$ closest to $(f_{k})_{k=1}^{K}$
in the $\ell^{2}$-norm, where
$||(f_{k})_{k=1}^{K}||_{2}:=\left(\sum_{k=1}^{K}||f_{k}||^{2}\right)^{1/2}$.
In its proof we use the following equality:
\begin{equation}\label{E norma al 2 de una suma suma normas al 2}
\sum_{1 \leq k < k' \leq
K}||f_{k}-f_{k'}||^{2}+||\sum_{l=1}^{K}f_{l}||^{2}=K\sum_{k=1}^{K}||f_{k}||^{2}.
\end{equation}
\begin{thm}\label{T min l2}
Let $(f_{k})_{k=1}^{K}$ be a sequence in $\mathbb{H}_{d}$. Then

\centerline{$\frac{1}{K}||\sum_{l=1}^{K}f_{l}||^{2}=\inf\{\sum_{k=1}^{K}||f_{k}-g_{k}||^{2}:(g_{k})_{k=1}^{K}$
{\textrm is a balanced sequence in} $\mathbb{H}_{d}\}$,}

\noindent and the infimum is attained for
$g_{k}=f_{k}-\frac{1}{K}\sum_{l=1}^{K}f_{l}$ for each $k = 1,
\ldots, K$.
\end{thm}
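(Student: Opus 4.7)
The plan is to reduce everything to identity (\ref{E norma al 2 de una suma suma normas al 2}) stated just before the theorem, but applied to the \emph{differences} $h_{k}:=f_{k}-g_{k}$ rather than to $f_{k}$ or $g_{k}$ separately. (I read the hypothesis as allowing an arbitrary sequence in $\mathbb{H}_{d}$, since if $(f_{k})_{k=1}^{K}$ is itself balanced both sides are zero and the claim is immediate with $g_{k}=f_{k}$.)

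First I would verify that the proposed candidate $g_{k}=f_{k}-\frac{1}{K}\sum_{l=1}^{K}f_{l}$ actually lies in the feasible set: summing over $k$ gives $\sum_{k=1}^{K}g_{k}=\sum_{k=1}^{K}f_{k}-K\cdot\frac{1}{K}\sum_{l=1}^{K}f_{l}=0$. A direct evaluation then shows
$$\sum_{k=1}^{K}\|f_{k}-g_{k}\|^{2}=\sum_{k=1}^{K}\Bigl\|\tfrac{1}{K}\sum_{l=1}^{K}f_{l}\Bigr\|^{2}=\tfrac{1}{K}\Bigl\|\sum_{l=1}^{K}f_{l}\Bigr\|^{2},$$
so the value claimed on the left is attained, and in particular it provides the ``$\leq$'' direction for the infimum.

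For the matching lower bound, I would fix an arbitrary balanced sequence $(g_{k})_{k=1}^{K}$ and set $h_{k}:=f_{k}-g_{k}$. Since $\sum_{k=1}^{K}g_{k}=0$, the total satisfies $\sum_{k=1}^{K}h_{k}=\sum_{l=1}^{K}f_{l}$, so applying (\ref{E norma al 2 de una suma suma normas al 2}) to $(h_{k})_{k=1}^{K}$ gives
$$\sum_{1\leq k<k'\leq K}\|h_{k}-h_{k'}\|^{2}+\Bigl\|\sum_{l=1}^{K}f_{l}\Bigr\|^{2}=K\sum_{k=1}^{K}\|h_{k}\|^{2}.$$
Dropping the nonnegative first term and dividing by $K$ yields exactly $\sum_{k=1}^{K}\|f_{k}-g_{k}\|^{2}\geq\frac{1}{K}\|\sum_{l=1}^{K}f_{l}\|^{2}$. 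Equality forces $h_{k}=h_{k'}$ for all pairs $k,k'$, and this, combined with $\sum_{k}h_{k}=\sum_{l}f_{l}$, uniquely determines $h_{k}=\frac{1}{K}\sum_{l=1}^{K}f_{l}$, recovering the stated candidate as the unique minimiser.

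I do not anticipate a serious obstacle: once identity (\ref{E norma al 2 de una suma suma normas al 2}) is available, the argument is the Hilbert-space incarnation of the elementary fact that among all tuples of vectors with a prescribed total, the constant tuple (the centroid) minimises the sum of squared norms. The only mildly delicate point is remembering to apply the identity to the differences $h_{k}$ and not to $(g_{k})_{k=1}^{K}$ itself, since it is the sum of the $h_{k}$'s (equal to $\sum_{l}f_{l}$) that is pinned down by the balanced constraint on $(g_{k})_{k=1}^{K}$.
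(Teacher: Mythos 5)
Your proof is correct and follows essentially the same route as the paper: both apply identity (\ref{E norma al 2 de una suma suma normas al 2}) to the differences $f_{k}-g_{k}$, whose sum equals $\sum_{l}f_{l}$ by balancedness of $(g_{k})_{k=1}^{K}$, and obtain equality exactly when all differences coincide. Your parenthetical observation that the hypothesis should read ``sequence'' rather than ``balanced sequence'' is also the intended reading, as the paper's surrounding discussion confirms.
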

\begin{proof}
Let $(g_{k})_{k=1}^{K}$ be a balanced sequence in $\mathbb{H}_{d}$.
Using (\ref{E norma al 2 de una suma suma normas al 2}),
\begin{eqnarray*}
\sum_{k=1}^{K}||f_{k}-(f_{k}-\frac{1}{K}\sum_{l=1}^{K}f_{l})||^{2}&=&\frac{1}{K}||\sum_{k=1}^{K}f_{k}||^{2}\\&=&\frac{1}{K}||\sum_{k=1}^{K}f_{k}-\sum_{k=1}^{K}g_{k}||^{2}\\&\leq&\frac{1}{K}\left[||\sum_{k=1}^{K}(f_{k}-g_{k})||^{2}+\sum_{1
\leq k < k' \leq K}||(f_{k}-g_{k})-(f_{k'}-g_{k'})||^{2}\right]\\&=
&\sum_{k=1}^{K}||f_{k}-g_{k}||^{2}.
\end{eqnarray*} Now suppose that $(g_{k})_{k=1}^{K}$ is a balanced
sequence in $\mathbb{H}_{d}$ and
$\sum_{k=1}^{K}||f_{k}-g_{k}||^{2}=\frac{1}{K}||\sum_{k=1}^{K}f_{k}||^{2}$.
Then
$\sum_{k=1}^{K}||f_{k}-g_{k}||^{2}=\frac{1}{K}||\sum_{k=1}^{K}(f_{k}-g_{k})||^{2}$.
So, by (\ref{E norma al 2 de una suma suma normas al 2}),
$f_{1}-g_{1}=\ldots=f_{K}-g_{K}$. Therefore,
$g_{k}=f_{k}-\frac{1}{K}\sum_{l=1}^{K}f_{l}$ for each $k = 1,
\ldots, K$.
\end{proof}
Note that if $\mathcal{F}=(f_{k})_{k=1}^{K}$ and
$\mathcal{G}=(g_{k})_{k=1}^{K}$, then
$\sum_{k=1}^{K}||f_{k}-g_{k}||^{2}=\|T_{\mathcal{F}}-T_{\mathcal{G}}\|_{F}^{2}$,
where $\|.\|_{F}$ denotes the Frobenius norm. In order to apply the
above theorems to frames we have the following result.

\begin{lem}\label{L cond nes y suf bal cercano sea
marco} Let $(p_{1}, \ldots,p_{K})^{t} \in \mathbb{R}^{K}$, where $0
< p_{k} < 1$ for each $k = 1, \ldots, K$ and
$\sum_{k=1}^{K}p_{k}=1$. If $\mathcal{F}=(f_{k})_{k=1}^{K}$ is a
frame for $\mathbb{H}_{d}$, then
$(f_{k}-p_{k}\sum_{l=1}^{K}f_{l})_{k=1}^{K}$ is a BF for
$\mathbb{H}_{d}$ if and only if $(p_{1}, \ldots,p_{K})^{t} \notin
\textrm{im}(T_{\mathcal{F}}^{*})$.
\end{lem}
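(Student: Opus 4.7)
Write $p := (p_1, \ldots, p_K)^t$ and $\mathcal{G} := (g_k)_{k=1}^K$ with $g_k := f_k - p_k \sum_{l=1}^K f_l$. The plan is to reduce the claim to an injectivity question about $T_{\mathcal{G}}^*$ and then to exhibit its kernel explicitly. Under the hypotheses $0 < p_k < 1$ and $\sum_k p_k = 1$, Theorem~\ref{T min l1} already guarantees that $\mathcal{G}$ is a balanced sequence, so what remains is to determine when $\mathcal{G}$ actually spans $\mathbb{H}_d$; by the preliminaries this is equivalent to $T_{\mathcal{G}}^*$ being injective.

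First I would compute $T_{\mathcal{G}}$. A direct expansion gives, for every $c \in \mathbb{F}^K$,
\[
T_{\mathcal{G}}\, c = \sum_{k=1}^K c_k f_k - \Bigl(\sum_{k=1}^K p_k c_k\Bigr) \sum_{l=1}^K f_l = T_{\mathcal{F}}(I - e p^t)\, c,
\]
so $T_{\mathcal{G}} = T_{\mathcal{F}}(I - e p^t)$; taking adjoints, and using that $p$ and $e$ have real entries, yields $T_{\mathcal{G}}^* = (I - p e^t)\, T_{\mathcal{F}}^*$.

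Next I would analyze $\ker T_{\mathcal{G}}^*$. The equation $T_{\mathcal{G}}^* f = 0$ rearranges to $T_{\mathcal{F}}^* f = \alpha\, p$ with $\alpha := e^t T_{\mathcal{F}}^* f = \sum_k \langle f, f_k \rangle$. Since $\mathcal{F}$ is a frame, $T_{\mathcal{F}}^*$ is injective, so either $\alpha = 0$, forcing $f = 0$, or $\alpha \neq 0$, in which case $p = \alpha^{-1} T_{\mathcal{F}}^* f \in \textrm{im}(T_{\mathcal{F}}^*)$. Hence, when $p \notin \textrm{im}(T_{\mathcal{F}}^*)$, the only solution is $f = 0$ and $\mathcal{G}$ is a frame. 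For the converse, if $p \in \textrm{im}(T_{\mathcal{F}}^*)$ I would pick $f_0$ with $T_{\mathcal{F}}^* f_0 = p$; since $e^t p = \sum_k p_k = 1 \neq 0$ we have $p \neq 0$ and hence $f_0 \neq 0$, while $T_{\mathcal{G}}^* f_0 = (I - p e^t)\, p = p - (e^t p)\, p = 0$, producing a nontrivial kernel element and showing $\mathcal{G}$ is not a frame.

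I do not foresee a serious obstacle; the only point demanding mild attention is the adjoint computation in the complex case, which is clean because $p$ and $e$ are real, so ordinary transposition suffices. The whole argument is essentially a rank-one perturbation analysis of $T_{\mathcal{F}}$ read through the lens of $p$.
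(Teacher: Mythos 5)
Your proof is correct, but it follows a genuinely different route from the paper's. Both arguments begin identically, by writing the synthesis operator of the perturbed sequence as $T_{\mathcal{F}}(I-ep^{t})$ (and the balancedness of the new sequence is immediate from $\sum_{k}p_{k}=1$, so that part is not at issue). From there the paper stays on the synthesis side and argues via ranks: Sylvester's inequality pins $\textrm{rank}(T_{\mathcal{F}}(I-ep^{t}))$ between $d-1$ and $d$, and Wedderburn's rank-one reduction formula (both cited from Horn--Johnson) identifies the rank-drop case exactly as $p\in\textrm{im}(T_{\mathcal{F}}^{*})$. You instead pass to the adjoint $T_{\mathcal{G}}^{*}=(I-pe^{t})T_{\mathcal{F}}^{*}$ and compute its kernel by hand: the equation $T_{\mathcal{F}}^{*}f=\alpha p$ with $\alpha=e^{t}T_{\mathcal{F}}^{*}f$, together with injectivity of $T_{\mathcal{F}}^{*}$ and the normalization $e^{t}p=1$, gives a clean dichotomy and an explicit nonzero kernel vector in the degenerate case. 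Your version is more elementary and self-contained --- it replaces the two cited matrix-theoretic results by a two-line linear-algebra computation and, as a bonus, exhibits the kernel of $T_{\mathcal{G}}^{*}$ explicitly as the span of the preimage of $p$; the paper's version is shorter on the page but outsources the real work to named theorems. The one delicate point, the adjoint of $ep^{t}$ over $\mathbb{C}$, you handle correctly by noting that $p$ and $e$ are real. The appeal to Theorem~\ref{T min l1} for balancedness is harmless but unnecessary, since $\sum_{k}\bigl(f_{k}-p_{k}\sum_{l}f_{l}\bigr)=0$ is a one-line check.
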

\begin{proof}
The synthesis operator of
$(f_{k}-p_{k}\sum_{l=1}^{K}f_{l})_{k=1}^{K}$ is
$T_{\mathcal{F}}(I-e(p_{1}, \ldots,p_{K}))$. If $\mathcal{F}$ is a
frame for $\mathbb{H}_{d}$, by the Sylvester inequality
\cite{Horn-Johnson (2013)}, $d-1 \leq
\textrm{rank}(T_{\mathcal{F}}(I-e(p_{1}, \ldots,p_{K}))) \leq d$,
and by the Wedderburn's rank-one reduction formula
\cite{Horn-Johnson (2013)},
$\textrm{rank}(T_{\mathcal{F}}-T_{\mathcal{F}}e(p_{1},
\ldots,p_{K}))=d-1$ if and only if $(p_{1}, \ldots,p_{K})^{t} \in
\textrm{im}(T_{\mathcal{F}}^{*})$.
\end{proof}

\begin{rem}
Let $\mathcal{F}=(f_{k})_{k=1}^{K}$ be a frame for $\mathbb{H}_{d}$.
Let $(p_{1}, \ldots,p_{K})^{t} \in \mathbb{R}^{K}$, where $0 < p_{k}
< 1$ for each $k = 1, \ldots, K$ and $\sum_{k=1}^{K}p_{k}=1$. If
there exists $f \in \mathbb{H}_{d}$ such that $(p_{1},
\ldots,p_{K})^{t}=T_{\mathcal{F}}^{*}f$, then
$(f_{k}-p_{k}\sum_{l=1}^{K}f_{l})_{k=1}^{K}$ is a BF for
$\textrm{span}\{f\}^{\perp}$. Conversely, if there exists $f \in
\mathbb{H}_{d}$, $f \notin \textrm{ker}(T_{\mathcal{F}}^{*})$, such
that $(f_{k}-p_{k}\sum_{l=1}^{K}f_{l})_{k=1}^{K}$ is a BF for
$\textrm{span}\{f\}^{\perp}$, then $(p_{1}, \ldots,p_{K})^{t}=\gamma
T_{\mathcal{F}}^{*}f$ for some $\gamma \in \mathbb{F}$, $\gamma \neq
0$.
\end{rem}

We can now give the answer to the question we posed at the beginning
of this section. The following theorem gives necessary and
sufficient conditions for the closest balanced frame to a given
frame to exist, and in this case gives its expression.

\begin{thm}\label{T no existencia mas cercano}
Let $\mathcal{F}=(f_{k})_{k=1}^{K}$ be a frame for $\mathbb{H}_{d}$.
Then
\begin{enumerate}
  \item There exist $\ell^{1}$-norm closest to
$\mathcal{F}$ balanced frames for $\mathbb{H}_{d}$ if and only if
$\mathcal{F}$ is not a basis, and in this case they are the frames
$(f_{k}-p_{k}\sum_{l=1}^{K}f_{l})_{k=1}^{K}$ where $0 < p_{k} < 1$,
$\sum_{k=1}^{K}p_{k}=1$ and $(p_{1}, \ldots,p_{K})^{t} \notin
\textrm{im}(T_{\mathcal{F}}^{*})$.
  \item There exist an
$\ell^{2}$-norm closest to $\mathcal{F}$ balanced frame for
$\mathbb{H}_{d}$ if and only if $e \notin
\textrm{im}(T_{\mathcal{F}}^{*})$, and in this case it is the frame
$(f_{k}-\frac{1}{K}\sum_{l=1}^{K}f_{l})_{k=1}^{K}$.
\end{enumerate}
\end{thm}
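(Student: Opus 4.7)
My plan for this theorem is to combine the descriptions of the $\ell^{1}$ and $\ell^{2}$ closest balanced \emph{sequences} from Theorems~\ref{T min l1} and~\ref{T min l2} with the frame criterion from Lemma~\ref{L cond nes y suf bal cercano sea marco}: a closest balanced frame is simply a closest balanced sequence that also happens to be a frame. Both parts then reduce to deciding when such a sequence satisfies the frame condition.

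For part (1), Theorem~\ref{T min l1} shows that every $\ell^{1}$-closest balanced sequence to $\mathcal{F}$ has the form $(f_{k}-p_{k}\sum_{l=1}^{K}f_{l})_{k=1}^{K}$ with $p$ in the open simplex $\Delta=\{p\in\mathbb{R}^{K}:p_{k}>0,\sum_{k}p_{k}=1\}$, and Lemma~\ref{L cond nes y suf bal cercano sea marco} says that such a sequence is a frame if and only if $p\notin\textrm{im}(T_{\mathcal{F}}^{*})$. So the question reduces to whether $\Delta$ contains a point outside $\textrm{im}(T_{\mathcal{F}}^{*})$. If $\mathcal{F}$ is a basis then $T_{\mathcal{F}}^{*}$ is bijective and $\textrm{im}(T_{\mathcal{F}}^{*})=\mathbb{F}^{K}$ contains $\Delta$, so no closest balanced frame exists. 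If $\mathcal{F}$ is not a basis, then $d<K$, and I would argue that the real affine hyperplane $H=\{p\in\mathbb{R}^{K}:\sum_{k}p_{k}=1\}$ cannot be contained in $\textrm{im}(T_{\mathcal{F}}^{*})$: otherwise the linear hull of $H\cup\{0\}$, which is all of $\mathbb{F}^{K}$ (using that $\textrm{im}(T_{\mathcal{F}}^{*})$ is closed under multiplication by scalars, including $i$ in the complex case), would also lie in $\textrm{im}(T_{\mathcal{F}}^{*})$. Consequently $\textrm{im}(T_{\mathcal{F}}^{*})\cap H$ is a proper affine subset of $H$, its complement is dense in $H$, and it meets the nonempty open set $\Delta$.

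For part (2), Theorem~\ref{T min l2} gives a unique $\ell^{2}$-closest balanced sequence $g_{k}^{*}=f_{k}-\frac{1}{K}\sum_{l=1}^{K}f_{l}$, corresponding to $p=e/K$. By Lemma~\ref{L cond nes y suf bal cercano sea marco}, $g^{*}$ is a frame if and only if $e/K\notin\textrm{im}(T_{\mathcal{F}}^{*})$, equivalently $e\notin\textrm{im}(T_{\mathcal{F}}^{*})$ since $\textrm{im}(T_{\mathcal{F}}^{*})$ is a linear subspace. When $e\notin\textrm{im}(T_{\mathcal{F}}^{*})$, the sequence $g^{*}$ is the $\ell^{2}$-closest balanced frame. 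The delicate direction is to show that when $e\in\textrm{im}(T_{\mathcal{F}}^{*})$ no closest balanced frame exists. I would argue by contradiction: assuming some balanced frame $\mathcal{G}=(g_{k})$ minimizes the distance, uniqueness in Theorem~\ref{T min l2} together with $\mathcal{G}\neq g^{*}$ (since $g^{*}$ is not a frame) forces $\sum_{k}\|f_{k}-g_{k}\|^{2}>\frac{1}{K}\|\sum_{l}f_{l}\|^{2}$. Along the balanced path $g_{k}(t)=(1-t)g_{k}+tg_{k}^{*}$, $t\in[0,1]$, the distance $h(t)=\sum_{k}\|f_{k}-g_{k}(t)\|^{2}$ is a convex quadratic in $t$ with $h(1)<h(0)$, so it is strictly decreasing at $t=0$. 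Since the frame property is open, $g_{k}(t)$ is still a balanced frame for all sufficiently small $t>0$, contradicting the minimality of $\mathcal{G}$. This convexity-plus-openness argument is the main obstacle; everything else is essentially bookkeeping from the previously established ingredients.
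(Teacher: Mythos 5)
Your proposal is correct. Part (1) and the easy direction of part (2) follow the paper's own route exactly: reduce to Theorem~\ref{T min l1}/Theorem~\ref{T min l2} plus Lemma~\ref{L cond nes y suf bal cercano sea marco}, and observe that the open simplex cannot lie inside the $d$-dimensional subspace $\textrm{im}(T_{\mathcal{F}}^{*})$ when $d<K$ (the paper merely asserts the complement is infinite; your affine-hull/density argument fills in the same point). Where you genuinely diverge is the hard direction of part (2), the case $e\in\textrm{im}(T_{\mathcal{F}}^{*})$. The paper fixes an arbitrary balanced frame $\mathcal{G}$, perturbs $\mathcal{F}$ itself to $\mathcal{F}_{\epsilon}$ by rescaling $f_{1}$, checks that $e\notin\textrm{im}(T_{\mathcal{F}_{\epsilon}}^{*})$ so that $\widetilde{\mathcal{F}_{\epsilon}}$ is a balanced frame, and then computes $\sum_{k}\|f_{k}-\widetilde{f}_{\epsilon,k}\|^{2}=|1-\epsilon|^{2}(1-\tfrac{1}{K})\|f_{1}\|^{2}+\tfrac{1}{K}\|\sum_{l}f_{l}\|^{2}$ explicitly, choosing $\epsilon$ to beat $\mathcal{G}$. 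You instead move from $\mathcal{G}$ along the balanced segment toward the unique sequence-minimizer $g^{*}$, using convexity of the squared $\ell^{2}$-distance together with openness of the frame condition. Both are complete; yours is shorter and more conceptual, avoids the paper's explicit distance computation and its ``without loss of generality $(f_{k})_{k=2}^{K}$ still spans'' reduction, and makes transparent that the obstruction is exactly that the unique minimizer fails to be a frame while frames form an open set. The paper's construction, in exchange, exhibits a concrete explicit family of strictly closer balanced frames. One cosmetic remark: your argument, as the paper's, actually proves the stronger statement that for \emph{every} balanced frame $\mathcal{G}$ there is a strictly closer one (the infimum over balanced frames is never attained), so you need not phrase it as a contradiction with an assumed minimizer.
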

\begin{proof}
If $\mathcal{F}$ is a basis, clearly there does not exist an
$\ell^{1}$-norm ($\ell^{2}$-norm) closest to $\mathcal{F}$ balanced
sequence which is a frame for $\mathbb{H}_{d}$, since there does not
exist balanced frames for $\mathbb{H}_{d}$ with $K=d$ elements. Thus
we suppose that $\mathcal{F}$ is not a basis.

(1) Since the set $\{(p_{1}, \ldots,p_{K})^{t} \in \mathbb{R}^{K} :
0 < p_{k} < 1 \textrm{ and } \sum_{k=1}^{K}p_{k}=1\} \cap
\textrm{im}(T_{\mathcal{F}}^{*})^{c}$ has an infinite number of
points, the conclusion follows from Theorem~\ref{T min l1} and
Lemma~\ref{L cond nes y suf bal cercano sea marco}.

(2) By Theorem~\ref{T min l2} and Lemma~\ref{L cond nes y suf bal
cercano sea marco}, if $e \notin \textrm{im}(T_{\mathcal{F}}^{*})$
then $(f_{k}-\frac{1}{K}\sum_{l=1}^{K}f_{l})_{k=1}^{K}$ is the
$\ell^{2}$-norm closest to $\mathcal{F}$ balanced frame for
$\mathbb{H}_{d}$.

Now suppose that $e \in \textrm{im}(T_{\mathcal{F}}^{*})$. Let
$\mathcal{G}=(g_{k})_{k=1}^{K}$ be any BF for $\mathbb{H}_{d}$. We
are going to prove that we can always find another BF for
$\mathbb{H}_{d}$ closer to $\mathcal{F}$ than $\mathcal{G}$ in the
$\ell^{2}$-norm.

Suppose without loss of generality that
$\mathcal{F}_{2}=(f_{k})_{k=2}^{K}$ still generates
$\mathbb{H}_{d}$, that is, $\mathcal{F}_{2}$ is a frame for
$\mathbb{H}_{d}$. So, $T_{\mathcal{F}_{2}}^{*}$ is injective.

For $\epsilon \neq 0$ let
$\mathcal{F}_{\epsilon}=(f_{\epsilon,k})_{k=1}^{K}$ where
$f_{\epsilon,1}=\epsilon f_{1}$, $f_{\epsilon,2}=f_{2}$, ....,
$f_{\epsilon,K}=f_{K}$.

Since $e \in \textrm{im}(T_{\mathcal{F}}^{*})$, none of the elements
of $\mathcal{F}$ is the null vector. Also, by Lemma~\ref{L cond nes
y suf bal cercano sea marco},
$\widetilde{\mathcal{F}}=(f_{k}-\frac{1}{K}\sum_{l=1}^{K}f_{l})_{k=1}^{K}$
is not a frame for $\mathbb{H}_{d}$. So
$\mathcal{G}\neq\widetilde{\mathcal{F}}$ and, by Theorem~\ref{T min
l2},

\centerline{$\frac{1}{K}||\sum_{k=1}^{K}f_{k}||^{2}<\sum_{k=1}^{K}||f_{k}-g_{k}||^{2}$.}

Take $\epsilon$ such that $0 < |1-\epsilon| <
\frac{1}{||f_{1}||}\sqrt{\frac{\sum_{k=1}^{K}||f_{k}-g_{k}||^{2}-\frac{1}{K}||\sum_{k=1}^{K}f_{k}||^{2}}{1-\frac{1}{K}}}$.

Let $f \in \mathbb{H}_{d}$ such that $T_{\mathcal{F}}^{*}f=e$. Then
$T_{\mathcal{F}_{2}}^{*}f=(1, \ldots, 1)^{t}$. If there would exist
$g \in \mathbb{H}_{d}$ such that
$T_{\mathcal{F}_{\epsilon}}^{*}g=e$, then
$T_{\mathcal{F}_{2}}^{*}g=(1, \ldots, 1)^{t}$. Since
$T_{\mathcal{F}_{2}}^{*}$ is injective $g=f$. So, $\langle f,
f_{1}\rangle=\langle f, \epsilon f_{1}\rangle = 1$. Thus $\epsilon =
1$ which is absurd. This shows that $e \notin
\textrm{im}(T_{\mathcal{F}_{\epsilon}}^{*})$. Hence, by Lemma~\ref{L
cond nes y suf bal cercano sea marco},
$\widetilde{\mathcal{F}_{\epsilon}}=(f_{\epsilon,k}-\frac{1}{K}\sum_{l=1}^{K}f_{\epsilon,l})_{k=1}^{K}$
is a BF for $\mathbb{H}_{d}$.

We have,
\begin{align*}
\sum_{k=1}^{K}||f_{k}-(f_{\epsilon,k}-&\frac{1}{K}\sum_{l=1}^{K}f_{\epsilon,l})||^{2}=
||(1-\epsilon)(1-\frac{1}{K})f_{1}+\frac{1}{K}\sum_{l=1}^{K}f_{l}||^{2}+\sum_{k=2}^{K}||\frac{1}{K}(\epsilon-1)f_{1}+\frac{1}{K}\sum_{l=1}^{K}f_{l}||^{2}\\
=&
|1-\epsilon|^{2}(1-\frac{1}{K})^{2}||f_{1}||^{2}+2(1-\epsilon)\frac{K-1}{K^{2}}\text{Re}(\langle
f_{1},
\sum_{l=1}^{K}f_{l}\rangle)+\frac{1}{K^{2}}||\sum_{l=1}^{K}f_{l}||^{2}\\
&+\sum_{k=2}^{K}[\frac{1}{K^{2}}|\epsilon-1|^{2}||f_{1}||^{2}+2\frac{1}{K^{2}}(\epsilon-1)\text{Re}(\langle
f_{1},
\sum_{l=1}^{K}f_{l}\rangle)+\frac{1}{K^{2}}||\sum_{l=1}^{K}f_{l}||^{2}]\\
=&
|1-\epsilon|^{2}(1-\frac{1}{K})||f_{1}||^{2}+\frac{1}{K}||\sum_{l=1}^{K}f_{l}||^{2}<\sum_{k=1}^{K}||f_{k}-g_{k}||^{2}.
\end{align*}
Hence $\widetilde{\mathcal{F}_{\epsilon}}$ is a BF for
$\mathbb{H}_{d}$ closer to $\mathcal{F}$ in the $\ell^{2}$-norm than
$\mathcal{G}$.
\end{proof}

%%%%%%%%%%%%%%%%%%%%%%%%%%%%%%%%%%%%%%%%%%%%%%%%%%%%%%%%%%%%%%%%%%%%%%%%%%%%%%%%%%%%%%%%
\section{A new concept of complement for balanced frames}

Let $\mathcal{F}$ be a BENPF for $\mathbb{H}_{d}$ and $\mathcal{G}$
be any of its complements. Then $\mathcal{G}$ is an ENPF for
$\mathbb{H}_{d}$. Since $G_{\mathcal{G}}e=e$, by Proposition~\ref{P
caracterizacion de duales} $\mathcal{G}$ is not balanced. Morevover,
since $e\in \textrm{im}(T_{\mathcal{G}}^{*})$, Theorem~\ref{T no
existencia mas cercano} tells us that although $\mathcal{G}$ has
closest balanced frames in the $\ell^{1}$-norm, it has not a closest
balanced frame in the $\ell^{2}$-norm.

In order to have complementary frames in the same class, we define
an alternative concept of complements for BPFs. To this end, we
first state the following proposition whose proof is
straightforward.

\begin{prop}\label{P complemento BPF}
Let $\mathcal{F}=(f_{k})_{k=1}^{K}$ be a BPF for $\mathbb{H}_{d}$.
Then $I-G_{\mathcal{F}}-\frac{1}{K}ee^{t}$ is the orthogonal
projection onto $(\textrm{im}(G_{\mathcal{F}}) \oplus
\textrm{span}\{e\})^{\perp}$.
\end{prop}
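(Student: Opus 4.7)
The plan is to directly verify that $P := I - G_{\mathcal{F}} - \frac{1}{K}ee^{t}$ is an orthogonal projection and to identify its range as the stated orthogonal complement. The strategy reduces to showing that $G_{\mathcal{F}}$ and $\frac{1}{K}ee^{t}$ are orthogonal projections onto mutually orthogonal subspaces, after which everything follows from the fact that a sum of projections onto orthogonal subspaces is itself the projection onto the direct sum.

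First, I would use that $\mathcal{F}$ is a Parseval frame to invoke Proposition~\ref{P propiedades de marcos}(5), which tells us $G_{\mathcal{F}}$ is an orthogonal projection; its range is $\textrm{im}(G_{\mathcal{F}})=\textrm{im}(T_{\mathcal{F}}^{*})$, a subspace of $\mathbb{F}^{K}$ of dimension $d$. Next I would invoke the hypothesis that $\mathcal{F}$ is balanced. By Proposition~\ref{P equivalencias balanceado}(3) this is equivalent to $G_{\mathcal{F}}e=0$, i.e.\ $e\in\textrm{ker}(G_{\mathcal{F}})=\textrm{im}(G_{\mathcal{F}})^{\perp}$. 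Hence $\textrm{span}\{e\}$ and $\textrm{im}(G_{\mathcal{F}})$ are orthogonal subspaces of $\mathbb{F}^{K}$, and the direct sum $\textrm{im}(G_{\mathcal{F}})\oplus\textrm{span}\{e\}$ is an orthogonal direct sum.

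Second, I would observe that $Q:=\frac{1}{K}ee^{t}$ is the orthogonal projection onto $\textrm{span}\{e\}$: since $e$ is real, $e^{t}=e^{*}$, so $Q^{*}=Q$; moreover $Q^{2}=\frac{1}{K^{2}}e(e^{t}e)e^{t}=\frac{1}{K}ee^{t}=Q$ because $e^{t}e=K$; and $Qe=e$ while $Qv=0$ for $v\perp e$, so $\textrm{im}(Q)=\textrm{span}\{e\}$. Since $e\perp\textrm{im}(G_{\mathcal{F}})$, we also have $G_{\mathcal{F}}Q=0=QG_{\mathcal{F}}$, so $G_{\mathcal{F}}+Q$ is self-adjoint and
\[
(G_{\mathcal{F}}+Q)^{2}=G_{\mathcal{F}}^{2}+G_{\mathcal{F}}Q+QG_{\mathcal{F}}+Q^{2}=G_{\mathcal{F}}+Q,
\]
so $G_{\mathcal{F}}+Q$ is the orthogonal projection onto $\textrm{im}(G_{\mathcal{F}})\oplus\textrm{span}\{e\}$.

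Finally, the identity minus an orthogonal projection is the orthogonal projection onto the orthogonal complement of its range, so $P=I-(G_{\mathcal{F}}+Q)$ is the orthogonal projection onto $(\textrm{im}(G_{\mathcal{F}})\oplus\textrm{span}\{e\})^{\perp}$, which is exactly the claim. There is no substantial obstacle here; the only things requiring care are noting that $e^{t}=e^{*}$ so the argument is valid over both $\mathbb{R}$ and $\mathbb{C}$, and using the balancedness hypothesis at precisely the right point to ensure $G_{\mathcal{F}}$ and $Q$ have orthogonal ranges.
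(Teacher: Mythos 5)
Your proof is correct, and it is precisely the direct verification the paper has in mind when it declares the proposition's proof ``straightforward'' and omits it: $G_{\mathcal{F}}$ is an orthogonal projection by Parsevalness, $\frac{1}{K}ee^{t}$ is the orthogonal projection onto $\textrm{span}\{e\}$, balancedness gives $G_{\mathcal{F}}e=0$ so the two ranges are orthogonal, and the claim follows. Nothing is missing.
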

Note that $\textrm{rank}(I-G_{\mathcal{F}}-\frac{1}{K}ee^{t})=K-d-1$
and $(I-G_{\mathcal{F}}-\frac{1}{K}ee^{t})e=0$. Based on
Proposition~\ref{P complemento BPF} and Theorem~\ref{T Naimark 2
para BPF} we introduce the following definition:
\begin{defn}
Two PFs $\mathcal{F}$ and $\mathcal{G}$ are \textit{B-complements}
of each other if the sum of their Gramians is $I-\frac{1}{K}ee^{t}$.
\end{defn}

In view of Proposition~\ref{P complemento BPF}, the B-complement of
a BPF of $K$ vectors for a space of dimension $d$ is a BPF of $K$
vectors for a space of dimension $K-d-1$. For future references we
state the following lemma that follows immediately from the
definitions of simplex frame and of B-complement:
\begin{lem}\label{L B complement simplex frame}
$\mathcal{F}$ is a simplex frame with $K$ elements if and only if
its B-complement is the frame for the zero vector space given by the
zero vector repeated $K$ times.
\end{lem}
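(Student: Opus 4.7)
The plan is to unwind both sides of the biconditional directly from the definitions, using the characterization of a simplex frame via its Gramian and the definition of B-complement.

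First I would observe that by Definition 2.4(3), $\mathcal{F}$ is a simplex frame with $K$ elements exactly when $G_{\mathcal{F}} = I - \frac{1}{K}ee^{t}$. On the other hand, by the definition of B-complement, a Parseval frame $\mathcal{G}$ is a B-complement of $\mathcal{F}$ precisely when
\[
G_{\mathcal{G}} = \Bigl(I - \tfrac{1}{K}ee^{t}\Bigr) - G_{\mathcal{F}}.
\]
Combining these two observations, $\mathcal{F}$ is a simplex frame with $K$ elements if and only if $G_{\mathcal{G}} = 0$.

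Next I would argue that $G_{\mathcal{G}} = 0$ is equivalent to $\mathcal{G}$ being the sequence consisting of $K$ copies of the zero vector. Indeed, the diagonal entries of $G_{\mathcal{G}}$ are $\|g_{k}\|^{2}$, so $G_{\mathcal{G}} = 0$ forces $g_{k}=0$ for each $k = 1, \ldots, K$; the converse is obvious. This sequence is by convention regarded as the (trivial) frame for the zero vector space, which matches the statement of the lemma.

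I do not expect any serious obstacle here: once one writes down the Gramian identity from the definition of B-complement and compares to the Gramian of a simplex frame, both directions are immediate, and the translation between "$G_{\mathcal{G}}=0$" and "$\mathcal{G}$ is the $K$-fold zero sequence" is a one-line observation from the diagonal of $G_{\mathcal{G}}$. The only minor point worth spelling out is that we should justify calling the all-zero sequence a frame for the zero vector space, which is consistent with the convention that a B-complement of a frame of $K$ vectors for a space of dimension $d$ is a frame of $K$ vectors for a space of dimension $K-d-1$, specialized to $K=d+1$ (as must be the case for a simplex frame by Proposition 2.6(7)).
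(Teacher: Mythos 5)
Your proof is correct and follows exactly the route the paper intends: the paper gives no explicit proof, stating only that the lemma ``follows immediately from the definitions of simplex frame and of B-complement,'' and your argument is precisely that unwinding (Gramian of a simplex frame equals $I-\frac{1}{K}ee^{t}$, so the B-complement's Gramian is zero, which via the diagonal entries forces the all-zero sequence). No gaps.
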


Note that the sum of the Gram matrices of two complementary PFs is
$I$, which is the Gram matrix corresponding to an orthonormal basis.
By Proposition~\ref{P propiedades de marcos}, an orthonormal basis
can be seen as a ``limit case" of a PF: it is a UNPF or a PF with
$K=d$. In the case of two B-complementary BPFs, the sum of their
Gram matrices is $I-\frac{1}{K}ee^{t}$, which is the Gram matrix
corresponding to simplex frames. We can think that in the family of
BFs, simplex frames are the analogous to othonormal basis in the
family of frames. This follows from Theorem~\ref{T equivalences for
simplex frame} below, which shows that a simplex frame can be seen
as a ``limit case" of BPF: it is a BENPF which elements have norm
equal to $\sqrt{\frac{d}{d+1}}$, or a BPF with $K=d+1$.

\begin{thm}\label{T equivalences for simplex frame}
Let $\mathcal{F}=(f_{k})_{k=1}^{K}$ be a sequence in
$\mathbb{H}_{d}$. The following assertions are equivalent:
\begin{enumerate}
  \item $\mathcal{F}$ is a simplex frame for $\mathbb{H}_{d}$.
  \item $\mathcal{F}$ is a BPF for $\mathbb{H}_{d}$ and $K=d+1$.
  \item $\mathcal{F}$ is a BPF for $\mathbb{H}_{d}$ and $||f_{k}||^{2}=\frac{d}{d+1}$ for each $k=1, \ldots, K$.
  \item $\mathcal{F}$ is an isogonal PF for $\mathbb{H}_{d}$ with $K > d$ and $||f_{k}||^{2}\neq \langle f_{k}, f_{l} \rangle$ for each $k=1, \ldots, K$, $k \neq l$.
  \item $\mathcal{F}$  is a BPF with
  $\textrm{ker}(T_{\mathcal{F}})=\textrm{span}\{e\}$.
\end{enumerate}
\end{thm}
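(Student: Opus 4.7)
The plan is to observe that the simplex condition $G_{\mathcal{F}} = I-\frac{1}{K}ee^{t}$ is very rigid: it encodes simultaneously the norms, the inner products, the Parseval property, and balancedness. So the strategy is to establish $(1)\Rightarrow(2),(3),(5)$ by reading these off from the Gram matrix, and then to show each of $(2),(3),(5)\Rightarrow(1)$ by recognizing that $G_{\mathcal{F}}$ must coincide with the rank-$(K-1)$ projection $I-\frac{1}{K}ee^{t}$. The equivalence $(1)\Leftrightarrow(4)$ will be handled separately through a direct computation with the $2$-parameter Gram matrix of an isogonal sequence.

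For $(1)\Rightarrow(2),(3),(5)$: from $G_{\mathcal{F}}=I-\frac{1}{K}ee^{t}$ I read off $\|f_{k}\|^{2}=1-\frac{1}{K}$ for all $k$, $G_{\mathcal{F}}e=0$ (so $\mathcal{F}$ is balanced by Proposition~\ref{P equivalencias balanceado}), and $G_{\mathcal{F}}^{2}=G_{\mathcal{F}}$ (the identity $(ee^{t})^{2}=K\,ee^{t}$ makes this immediate). By Proposition~\ref{P propiedades de marcos}(5), $\mathcal{F}$ is a PF, and since $\mathrm{rank}(G_{\mathcal{F}})=K-1$ is the dimension of the space spanned by $\mathcal{F}$, we get $d=K-1$, hence $K=d+1$ and $\|f_{k}\|^{2}=\frac{d}{d+1}$. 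Finally, $\ker(T_{\mathcal{F}})=\ker(G_{\mathcal{F}})=\mathrm{span}\{e\}$, giving $(5)$.

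For the reverse implications $(2),(3),(5)\Rightarrow(1)$: in each case the key is to show $K=d+1$, after which the argument is identical. In case $(2)$ this is by hypothesis; in case $(3)$ take the trace, $d=\mathrm{tr}(S_{\mathcal{F}})=\sum\|f_{k}\|^{2}=K\cdot\frac{d}{d+1}$, which forces $K=d+1$; in case $(5)$ the rank-nullity theorem gives $K-d=\dim\ker(T_{\mathcal{F}})=1$. Once $K=d+1$, the synthesis operator $T_{\mathcal{F}}:\mathbb{F}^{d+1}\to\mathbb{H}_{d}$ is onto with a one-dimensional kernel containing $e$ (by balancedness), so $\ker(T_{\mathcal{F}})=\mathrm{span}\{e\}$ and $\mathrm{im}(T_{\mathcal{F}}^{*})=\mathrm{span}\{e\}^{\perp}$. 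By Proposition~\ref{P propiedades de marcos}(5) $G_{\mathcal{F}}$ is an orthogonal projection, necessarily onto $\mathrm{span}\{e\}^{\perp}$, which equals $I-\frac{1}{K}ee^{t}$, proving $(1)$.

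For $(1)\Leftrightarrow(4)$: the direction $(1)\Rightarrow(4)$ is immediate from the values $\|f_{k}\|^{2}=\frac{K-1}{K}$, $\langle f_{k},f_{l}\rangle=-\frac{1}{K}$ for $k\neq l$, which are distinct, and from $K=d+1>d$. For $(4)\Rightarrow(1)$, write $c=\|f_{k}\|^{2}$ and $a=\langle f_{k},f_{l}\rangle$ with $c\neq a$, so that $G_{\mathcal{F}}=(c-a)I+a\,ee^{t}$. Using $(ee^{t})^{2}=K\,ee^{t}$, the Parseval condition $G_{\mathcal{F}}^{2}=G_{\mathcal{F}}$ yields the two scalar equations $(c-a)^{2}=c-a$ and $2a(c-a)+Ka^{2}=a$. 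The first gives $c-a=1$ (the case $c=a$ is excluded by hypothesis), and substituting into the second gives $a(1+Ka)=0$. If $a=0$ then $G_{\mathcal{F}}=I$ forces $\mathcal{F}$ to be orthonormal, contradicting $K>d$; hence $a=-\frac{1}{K}$ and $c=\frac{K-1}{K}$, so $G_{\mathcal{F}}=I-\frac{1}{K}ee^{t}$. The only subtle point throughout is handling the excluded degenerate cases ($c=a$, and $a=0$ in $(4)$), which is precisely where the side hypotheses $K>d$ and $\|f_{k}\|^{2}\neq\langle f_{k},f_{l}\rangle$ do their work; apart from this, the argument is largely linear-algebraic bookkeeping.
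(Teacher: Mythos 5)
Your proof is correct, and the overall architecture (read everything off the rigid Gram matrix $I-\frac{1}{K}ee^{t}$ for the forward directions, then force $G_{\mathcal{F}}$ back to that matrix for the converses) matches the paper's. The genuine differences are in the mechanics of two implications. For $(2)\Rightarrow(1)$ the paper invokes the B-complement machinery of that section: a B-complement $\mathcal{G}$ of a BPF with $K=d+1$ has $\textrm{rank}(G_{\mathcal{G}})=K-d-1=0$, and Lemma~\ref{L B complement simplex frame} then identifies $\mathcal{F}$ as a simplex frame; you instead give a self-contained argument via $\textrm{ker}(T_{\mathcal{F}})=\textrm{span}\{e\}$ and the fact that a rank-$d$ orthogonal projection with kernel $\textrm{span}\{e\}$ must be $I-\frac{1}{K}ee^{t}$ -- which is exactly the paper's own proof of $(5)\Rightarrow(1)$, so your route unifies $(2),(3),(5)\Rightarrow(1)$ into one argument and avoids any dependence on the complement formalism. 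For $(3)$, your trace identity $d=\textrm{tr}(S_{\mathcal{F}})=\sum_k\|f_k\|^2$ is the same computation the paper performs by rescaling to a $\frac{d+1}{d}$-BUNTF and quoting $\alpha=K/d$. For $(4)\Rightarrow(1)$ the paper diagonalizes the circulant matrix $(c-a)I+aee^{t}$ and uses the rank condition to kill the eigenvalue $c+a(K-1)$, whereas you match coefficients of $I$ and $ee^{t}$ in $G_{\mathcal{F}}^2=G_{\mathcal{F}}$ (legitimate since these matrices are linearly independent for $K\geq 2$) and then rule out $a=0$ using $K>d$; both handle the same degenerate cases, and yours is marginally more elementary. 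Nothing is missing; in particular you correctly use $K>d$ both to exclude $c=a$ trivialities and to exclude the orthonormal-basis case $a=0$.
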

\begin{proof}
If $\mathcal{F}$ is a simplex frame for $\mathbb{H}_{d}$ then, by
Proposition~\ref{P propiedades de marcos} and Corollary~\ref{C
simplex then balanced}, $\mathcal{F}$ is an isogonal BPF for
$\mathbb{H}_{d}$ and $K=d+1$. We also have
$\textrm{diag}(G_{\mathcal{F}})=1-\frac{1}{K}=\frac{d}{d+1}$ and
$\textrm{ker}(T_{\mathcal{F}})=\textrm{ker}(G_{\mathcal{F}})=\textrm{span}\{e\}$.
So $(1)$ implies the rest of the assertions.

$(2) \Rightarrow (1)$. Suppose that $\mathcal{F}$ is a BPF with
$K=d+1$. Let $\mathcal{G}$ be a B-complement of $\mathcal{F}$. Then
$\textrm{rank}(G_{\mathcal{G}})=K-d-1=0$. So, by Lemma~\ref{L B
complement simplex frame}, $\mathcal{F}$ is a simplex frame.

$(3) \Rightarrow (2)$. Suppose that $\mathcal{F}$ is BPF for
$\mathbb{H}_{d}$ and $||f_{k}||^{2}=\frac{d}{d+1}$ for each $k=1,
\ldots, K$. Then $(\sqrt{\frac{d+1}{d}}f_{k})_{k=1}^{K}$ is a
$\frac{d+1}{d}$-BUNTF. From Proposition~\ref{P propiedades de
marcos}, $\frac{d+1}{d}=\frac{K}{d}$. So, $K=d+1$.

$(4) \Rightarrow (1)$. By hypotheses,
$G_{\mathcal{F}}^{2}=G_{\mathcal{F}}$ and there exists $a, c \in
\mathbb{R}$, $a \neq c$, such that $G_{\mathcal{F}}=(c-a)I+aee^{t}$.
$G_{\mathcal{F}}$ is a circulant matrix, so its eigenvalues are
$c+a(K-1)$ and $c-a$ with multiplicity $K-1$ \cite{Horn-Johnson
(2013)}. Since $\textrm{rank}(G_{\mathcal{F}})=d$, $K > d$ and $c-a
\neq 0$, we have $a=-\frac{c}{K-1}$ and $K-1=d$. Thus
$G_{\mathcal{F}}=\frac{c}{K-1}(KI-ee^{t})$. Since
$G_{\mathcal{F}}^{2}=G_{\mathcal{F}}$, $c=\frac{K-1}{K}$. Therefore,
$G_{\mathcal{F}}=I-\frac{1}{K}ee^{t}$ and $\mathcal{F}$ is a simplex
frame.

$(5) \Rightarrow (1)$. By the hypotheses, $G_{\mathcal{F}}$ is an
orthogonal projection matrix and
$\textrm{im}(G_{\mathcal{F}})=\textrm{span}\{e\}^{\perp}$, so
$G_{\mathcal{F}}=I-\frac{1}{K}ee^{t}$ and $\mathcal{F}$ is a simplex
frame.
\end{proof}

Some of the points of the previous theorem can be seen as variations
of statements that appear in \cite{Waldron (2018)}. By
Corollary~\ref{C BF K d+1 unico dual} and Theorem~\ref{T
equivalences for simplex frame}, the canonical dual, which in this
case it is itself, is the unique balanced dual of a simplex frame.
Moreover, by Theorem~\ref{T equivalences for simplex frame} and
Corollary~\ref{C BF F SinvF Sinv12F}:

\begin{cor}
$\mathcal{F}$ is a BF for $\mathbb{H}_{d}$ with $K=d+1$ if and only
if $S_{\mathcal{F}}^{-1/2}\mathcal{F}$ is a simplex frame for
$\mathbb{H}_{d}$.
\end{cor}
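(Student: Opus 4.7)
The plan is to combine the two results cited immediately before the statement: Corollary~\ref{C BF F SinvF Sinv12F} provides the bridge between balancedness of $\mathcal{F}$ and balancedness of $S_{\mathcal{F}}^{-1/2}\mathcal{F}$, while Theorem~\ref{T equivalences for simplex frame} identifies simplex frames as exactly those BPFs with $K=d+1$. Note that the transformation $\mathcal{F} \mapsto S_{\mathcal{F}}^{-1/2}\mathcal{F}$ preserves the number of vectors, since $S_{\mathcal{F}}^{-1/2}$ is applied entrywise to the sequence, so the parameter $K$ is the same on both sides of the equivalence.

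For the forward direction, I would assume $\mathcal{F}$ is a BF for $\mathbb{H}_{d}$ with $K=d+1$. By Corollary~\ref{C BF F SinvF Sinv12F}, $S_{\mathcal{F}}^{-1/2}\mathcal{F}$ is a BPF for $\mathbb{H}_{d}$, and it still has $K=d+1$ elements. Then the implication $(2)\Rightarrow (1)$ of Theorem~\ref{T equivalences for simplex frame} gives at once that $S_{\mathcal{F}}^{-1/2}\mathcal{F}$ is a simplex frame.

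For the converse, suppose $S_{\mathcal{F}}^{-1/2}\mathcal{F}$ is a simplex frame for $\mathbb{H}_{d}$. By Proposition~\ref{P propiedades de marcos}(7), this forces $K=d+1$ and tells us $S_{\mathcal{F}}^{-1/2}\mathcal{F}$ is a Parseval frame; by Corollary~\ref{C simplex then balanced} it is also balanced, hence a BPF. Applying Corollary~\ref{C BF F SinvF Sinv12F} in the reverse direction (the equivalence $(1)\Leftrightarrow (3)$ stated there), we conclude that $\mathcal{F}$ is itself a BF for $\mathbb{H}_{d}$, with the same $K=d+1$.

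There is no real obstacle: the statement is essentially the composition of the two cited results, with only the book-keeping observation that $S_{\mathcal{F}}^{-1/2}$ does not alter the length of the sequence. The one point worth checking carefully is that Corollary~\ref{C BF F SinvF Sinv12F} is an equivalence, so it can be used in both directions, which it is.
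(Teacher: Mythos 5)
Your proposal is correct and matches the paper's own (implicit) argument: the paper derives this corollary directly from Theorem~\ref{T equivalences for simplex frame} and Corollary~\ref{C BF F SinvF Sinv12F}, exactly the two results you combine. Your converse direction detours through Proposition~\ref{P propiedades de marcos}(7) and Corollary~\ref{C simplex then balanced} where the implication $(1)\Rightarrow(2)$ of Theorem~\ref{T equivalences for simplex frame} would do directly, but this is an immaterial variation.
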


In what follows we consider properties of B-complementary BPFs that
are analogous to properties of complementary PFs that can be found
in \cite{Waldron (2018)}.

Let $\mathcal{F}$ be a BPF for $\mathbb{H}_{d}$. The B-complements
of $\mathcal{F}$ are unitary equivalent. Let $\mathcal{G}$ be a
B-complement of $\mathcal{F}$. Then $\mathcal{F}$ is equal-norm (or
isogonal or real) if and only if $\mathcal{G}$ is. $\mathcal{F}$ and
$\mathcal{G}$ can not be unitarily equivalent.

We note that if $\mathcal{F}$ is a BPF for $\mathbb{F}^{d}$ with $K$
elements and $(\frac{1}{\sqrt{K}}e, v_{1}, \ldots, v_{K-d-1})$ is an
orthonormal basis for $\textrm{ker}(T_{\mathcal{F}})$, then the
columns of the matrix which rows are $v_{k}^{*}$, $k=1, \ldots,
K-d-1$, constitutes a B-complement BPF of $\mathcal{F}$.

We now introduce B-complementary BFs:
\begin{defn}
Two \textit{BFs $\mathcal{F}$ and $\mathcal{G}$ are B-complements}
if the PFs $S_{\mathcal{F}}^{-1/2}\mathcal{F}$ and
$S_{\mathcal{G}}^{-1/2}\mathcal{G}$ are B-complements.
\end{defn}

Analogous to \cite[Proposition 5.1]{Waldron (2018)} we have:
\begin{prop}
Let $\mathcal{F}=(f_{k})_{k=1}^{K}$ and
$\mathcal{G}=(g_{k})_{k=1}^{K}$ be BFs for $\mathbb{H}_{d_{1}}$ and
$\mathbb{H}_{d_{2}}$, respectively. Then the following are
equivalent:
\begin{enumerate}
  \item $\mathcal{F}$ and $\mathcal{G}$ are B-complements.
  \item $\textrm{im}(G_{\mathcal{F}})\oplus\textrm{im}(G_{\mathcal{G}})=\textrm{span}\{e\}^{\perp}$.
  \item $\textrm{dim}(\mathbb{H}_{d_{1}})+\textrm{dim}(\mathbb{H}_{d_{1}})=K-1$ and $T_{\mathcal{G}}T_{\mathcal{F}}^{*}=0$.
  \item The inner sum $\mathcal{F}\oplus\mathcal{G}=(f_{k},g_{k})_{k=1}^{K}$ is a BF for $\mathbb{H}_{d_{1}}\oplus\mathbb{H}_{d_{2}}$ with $K=d_{1}+d_{2}+1$ and $T_{\mathcal{G}}T_{\mathcal{F}}^{*}=0$.
  \item $T_{\mathcal{G}}=T_{\mathcal{G}}(I-\frac{1}{K}ee^{t}-T_{\mathcal{F}}^{*}S_{\mathcal{F}}^{-1}T_{\mathcal{F}})$.
\end{enumerate}
\end{prop}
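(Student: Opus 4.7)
The plan is to pass everything through the orthogonal projections $P_{\mathcal{F}} = T_{\mathcal{F}}^{*}S_{\mathcal{F}}^{-1}T_{\mathcal{F}}$ and $P_{\mathcal{G}} = T_{\mathcal{G}}^{*}S_{\mathcal{G}}^{-1}T_{\mathcal{G}}$, which project onto $\textrm{im}(G_{\mathcal{F}}) = \textrm{im}(T_{\mathcal{F}}^{*})$ and $\textrm{im}(G_{\mathcal{G}}) = \textrm{im}(T_{\mathcal{G}}^{*})$, respectively (these are also the Gramians of the canonical Parseval frames $S_{\mathcal{F}}^{-1/2}\mathcal{F}$ and $S_{\mathcal{G}}^{-1/2}\mathcal{G}$). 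Balancedness of $\mathcal{F}$ and $\mathcal{G}$ places both images inside $\textrm{span}\{e\}^{\perp}$, and $I-\frac{1}{K}ee^{t}$ is exactly the orthogonal projection onto $\textrm{span}\{e\}^{\perp}$. Unwinding the definition of B-complementary balanced frames, condition (1) becomes the operator identity $P_{\mathcal{F}} + P_{\mathcal{G}} = I - \frac{1}{K}ee^{t}$.

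For $(1)\Leftrightarrow(2)$ I will use the classical fact that a sum of two orthogonal projections is itself an orthogonal projection iff their images are mutually orthogonal, in which case the image of the sum is the orthogonal direct sum of the images; reading this for $P_{\mathcal{F}}+P_{\mathcal{G}}$ against $\textrm{span}\{e\}^{\perp}$ yields exactly the decomposition in (2). For $(2)\Leftrightarrow(3)$, orthogonality of the two images is equivalent to $T_{\mathcal{G}}T_{\mathcal{F}}^{*}=0$, and the direct sum of two orthogonal subspaces of dimensions $d_{1}, d_{2}$ inside the $(K-1)$-dimensional space $\textrm{span}\{e\}^{\perp}$ fills it precisely when $d_{1}+d_{2}=K-1$. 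For $(3)\Leftrightarrow(4)$, the synthesis operator of $\mathcal{F}\oplus\mathcal{G}$ is $c \mapsto (T_{\mathcal{F}}c, T_{\mathcal{G}}c)$, which sends $e$ to $(0,0)$ by balancedness of each factor; for surjectivity onto $\mathbb{H}_{d_{1}}\oplus\mathbb{H}_{d_{2}}$ I will decouple the two components using $T_{\mathcal{F}}T_{\mathcal{G}}^{*} = (T_{\mathcal{G}}T_{\mathcal{F}}^{*})^{*} = 0$, so that the ansatz $c = T_{\mathcal{F}}^{*}S_{\mathcal{F}}^{-1}f_{1}+T_{\mathcal{G}}^{*}S_{\mathcal{G}}^{-1}f_{2}$ maps onto $(f_{1}, f_{2})$; the converse is immediate since (4) already packages both $T_{\mathcal{G}}T_{\mathcal{F}}^{*}=0$ and $K=d_{1}+d_{2}+1$.

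For $(3)\Leftrightarrow(5)$ I will rearrange (5) into $T_{\mathcal{G}}\bigl(\tfrac{1}{K}ee^{t} + P_{\mathcal{F}}\bigr) = 0$. Balancedness of $\mathcal{F}$ gives $e \perp \textrm{im}(T_{\mathcal{F}}^{*})$, so $\tfrac{1}{K}ee^{t}+P_{\mathcal{F}}$ is the orthogonal projection onto $\textrm{span}\{e\} \oplus \textrm{im}(T_{\mathcal{F}}^{*})$, and the rearranged identity is equivalent to $T_{\mathcal{G}}$ vanishing on both $\textrm{span}\{e\}$ (automatic from balancedness of $\mathcal{G}$) and on $\textrm{im}(T_{\mathcal{F}}^{*})$, i.e.\ precisely $T_{\mathcal{G}}T_{\mathcal{F}}^{*}=0$. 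The main step I expect to require some care is the dimension matching: as an operator identity, (5) directly supplies only the orthogonality half of (3), so the equality $d_{1}+d_{2}=K-1$ must be harvested from the cycle by combining this orthogonality with $\textrm{rank}(T_{\mathcal{G}})=d_{2}$ and the structure of $\textrm{span}\{e\}^{\perp}$ (equivalently, closed up via the implications $(3)\Rightarrow(4)\Rightarrow(5)$ and $(5)$ together with balancedness of $\mathcal{G}$ placing $\textrm{im}(T_{\mathcal{G}}^{*})$ as a full complement of $\textrm{im}(T_{\mathcal{F}}^{*})$ inside $\textrm{span}\{e\}^{\perp}$).
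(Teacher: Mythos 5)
The paper states this proposition without proof (it is presented as ``analogous to'' Waldron's Proposition 5.1), so there is no in-paper argument to compare against; your attempt has to stand on its own. Your treatment of $(1)\Leftrightarrow(2)\Leftrightarrow(3)\Leftrightarrow(4)$ is correct and surely the intended route: reduce (1) to the projection identity $P_{\mathcal{F}}+P_{\mathcal{G}}=I-\frac{1}{K}ee^{t}$, use that a sum of two orthogonal projections is an orthogonal projection iff their ranges are orthogonal, translate orthogonality of $\textrm{im}(T_{\mathcal{F}}^{*})$ and $\textrm{im}(T_{\mathcal{G}}^{*})$ into $T_{\mathcal{G}}T_{\mathcal{F}}^{*}=0$, and count dimensions inside the $(K-1)$-dimensional space $\textrm{span}\{e\}^{\perp}$. (You are implicitly reading the $\oplus$ in (2) as an \emph{orthogonal} direct sum; that is the only reading under which $(2)\Rightarrow(1)$ holds, so it is the right one.) The surjectivity argument for (4) via $c=T_{\mathcal{F}}^{*}S_{\mathcal{F}}^{-1}f_{1}+T_{\mathcal{G}}^{*}S_{\mathcal{G}}^{-1}f_{2}$ is also fine.

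The gap you flag at $(5)\Rightarrow(3)$ is real, and neither of your proposed escapes closes it. As you compute, for balanced $\mathcal{F}$ and $\mathcal{G}$ condition (5) is equivalent to $T_{\mathcal{G}}T_{\mathcal{F}}^{*}=0$ alone: it asserts only $\textrm{im}(T_{\mathcal{G}}^{*})\subseteq(\textrm{span}\{e\}\oplus\textrm{im}(T_{\mathcal{F}}^{*}))^{\perp}$, not equality, so it does not ``place $\textrm{im}(T_{\mathcal{G}}^{*})$ as a full complement,'' and ``harvesting the dimension count from the cycle'' is circular --- you cannot invoke $(3)\Rightarrow(5)$ while proving $(5)\Rightarrow(3)$. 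In fact the implication fails as stated: take $K=4$, $\mathcal{F}=(1,-1,0,0)$ and $\mathcal{G}=(0,0,1,-1)$, both balanced frames for $\mathbb{R}^{1}$. Then $T_{\mathcal{G}}T_{\mathcal{F}}^{*}=0$ and $T_{\mathcal{G}}e=0$, so (5) holds, yet $d_{1}+d_{2}=2\neq 3=K-1$ and $P_{\mathcal{F}}+P_{\mathcal{G}}$ has rank $2$ while $I-\frac{1}{4}ee^{t}$ has rank $3$. So the flaw lies in the statement of (5) rather than in your argument: (5) needs to be strengthened (e.g.\ to $\ker T_{\mathcal{G}}=\textrm{span}\{e\}\oplus\textrm{im}(T_{\mathcal{F}}^{*})$, or by adjoining $d_{1}+d_{2}=K-1$) before the five conditions are genuinely equivalent. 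Your proof of the other four equivalences stands.
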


\begin{rem}
In case that $\mathcal{F}=(f_{k})_{k=1}^{K}$ and
$\mathcal{G}=(g_{k})_{k=1}^{K}$ are BPFs, (4) of the previous
proposition becomes:
$\mathcal{F}\oplus\mathcal{G}=(f_{k},g_{k})_{k=1}^{K}$ is a simplex
frame for $\mathbb{H}_{d_{1}}\oplus\mathbb{H}_{d_{2}}$.
\end{rem}

This concept can be applied to construct BUNTFs of $K$ vectors for a
space of dimension $K-d-1$ from BUNTFs of $K$ vectors for a space of
dimension $d$.

%%%%%%%%%%%%%%%%%%%%%%%%%%%%%%%%%%%%%%%%%%%%%%%%%%%%%%%%%%%%%%%%%%%%%%%%%%%%%%%%%%%%%%%%
%%%%%%%%%%%%%%%%%%%%%%%%%%%%%%%%%%%%%%%%%%%%%%%%%%%%%%%%%%%%%%%%%%%%%%%%%%%%%%%%%%%%%%%%
\section{Examples of balanced unit norm tight frames}

The aim of this section is to present various examples of BUNTFs and
some of their properties. Sometimes we identify a frame
$\mathcal{F}$ for $\mathbb{F}^{d}$ of $K$ elements with the matrix
that represents $T_{\mathcal{F}}$ in the standard bases of
$\mathbb{F}^{d}$ and $\mathbb{F}^{K}$.

%%%%%%%%%%%%%%%%%%%%%%%%%%%%%%%%%%%%%%%%%%%%%%%%%%%%%%%
\subsection{The case $\mathbb{F}=\mathbb{R}$ and $d=2$}
As a consequence of Proposition~\ref{P Proposition 6.1 Waldron} and
\cite[Lemma 1]{Hong (1982)} we obtain:
\begin{thm}\label{T BUNTF d=2}
The following are equivalent:
\begin{enumerate}
  \item $((\cos \theta_{k}, \sin \theta_{k})^{t})_{k=1}^{K}$ is a
  BUNTF for $\mathbb{R}^{2}$.
  \item $\sum_{k=1}^{K}e^{i\theta_{k}}=\sum_{k=1}^{K}e^{2i\theta_{k}}=0$.
  \item $\sum_{k=1}^{K}e^{i\theta_{k}}=\sum_{1 \leq k_{1} < k_{2} \leq
  K}e^{2i\theta_{k_{1}}}e^{2i\theta_{k_{2}}}=0$.
\end{enumerate}
\end{thm}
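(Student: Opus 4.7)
The plan is to prove $(1)\Leftrightarrow(2)$ and $(2)\Leftrightarrow(3)$ separately, the first by a direct trigonometric computation (or by invoking the spherical-design characterization of BUNTFs in Proposition~\ref{P Proposition 6.1 Waldron}), and the second by Newton's identity.

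For $(1)\Leftrightarrow(2)$: by Proposition~\ref{P Proposition 6.1 Waldron}, the sequence $((\cos\theta_{k},\sin\theta_{k})^{t})_{k=1}^{K}$ of unit vectors in $\mathbb{R}^{2}$ is a BUNTF iff it is a spherical $2$-design, and the standard characterization of $2$-designs on $S^{1}$ by the vanishing of the first two trigonometric moments (Lemma~1 of Hong~(1982)) is exactly the condition $\sum_{k}e^{i\theta_{k}}=0=\sum_{k}e^{2i\theta_{k}}$. For a self-contained verification: by Proposition~\ref{P equivalencias balanceado}, balancedness amounts to $\sum_{k}(\cos\theta_{k},\sin\theta_{k})^{t}=0$, equivalently $\sum_{k}e^{i\theta_{k}}=0$. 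Unit norm is automatic. Since Proposition~\ref{P propiedades de marcos}(1) forces $\alpha=K/2$ for a UNTF in $\mathbb{R}^{2}$, tightness reads $S_{\mathcal{F}}=\sum_{k}f_{k}f_{k}^{t}=(K/2)I_{2}$; expanding entrywise and using $\cos 2\theta=\cos^{2}\theta-\sin^{2}\theta$ and $\sin 2\theta=2\sin\theta\cos\theta$ turns this into $\sum_{k}\cos 2\theta_{k}=\sum_{k}\sin 2\theta_{k}=0$, i.e., $\sum_{k}e^{2i\theta_{k}}=0$.

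For $(2)\Leftrightarrow(3)$, set $z_{k}:=e^{i\theta_{k}}$ and apply Newton's identity
\[
\Big(\sum_{k=1}^{K}z_{k}\Big)^{2}=\sum_{k=1}^{K}z_{k}^{2}+2\sum_{1\leq k_{1}<k_{2}\leq K}z_{k_{1}}z_{k_{2}}.
\]
Under the shared hypothesis $\sum_{k}z_{k}=0$ this reduces to $\sum_{k}z_{k}^{2}=-2\sum_{k_{1}<k_{2}}z_{k_{1}}z_{k_{2}}$, so the two remaining second-order conditions are equivalent. Neither step poses a real obstacle; the proof is essentially one trigonometric manipulation together with one use of Newton's identity. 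The only care needed is to parse the summand in (3) as the elementary symmetric polynomial $e_{2}(z_{1},\ldots,z_{K})=\sum_{k_{1}<k_{2}}z_{k_{1}}z_{k_{2}}$ in the $z_{k}=e^{i\theta_{k}}$, so that Newton's identity is directly applicable; any reading that produces $\sum_{k_{1}<k_{2}}z_{k_{1}}^{2}z_{k_{2}}^{2}$ would require an additional constraint on the fourth power sum $\sum_{k}z_{k}^{4}$, which is not implied by (1) (as one sees by taking $\theta_{k}=(k-1)\pi/2$, $k=1,\ldots,4$).
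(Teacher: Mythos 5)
Your argument is correct, and it is essentially the derivation the paper leaves implicit: the paper gives no written proof, presenting the theorem as an immediate consequence of Proposition~\ref{P Proposition 6.1 Waldron} and \cite[Lemma 1]{Hong (1982)}, which is exactly your primary route for $(1)\Leftrightarrow(2)$. Your self-contained verification (balancedness $\Leftrightarrow$ vanishing first trigonometric moment; tightness with the forced constant $K/2$ from Proposition~\ref{P propiedades de marcos}(1) $\Leftrightarrow$ vanishing second moment via the double-angle identities) and the Newton-identity step for $(2)\Leftrightarrow(3)$ supply what those citations contain. More importantly, your closing caveat is not a mere parsing issue: as printed, item (3) is false. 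The summand $e^{2i\theta_{k_1}}e^{2i\theta_{k_2}}$ is the second elementary symmetric polynomial of the $w_k=e^{2i\theta_k}$, and under $\sum_k e^{2i\theta_k}=0$ Newton's identity gives $2\sum_{k_1<k_2}w_{k_1}w_{k_2}=-\sum_k e^{4i\theta_k}$, which need not vanish; your example of the fourth roots of unity, $\theta_k=(k-1)\pi/2$ for $k=1,\ldots,4$ (a BUNTF by Corollary~\ref{C BUNTF d=2 roots of unity}), has $\sum_k e^{4i\theta_k}=4\neq 0$, so it satisfies (1) and (2) but not the literal (3). The theorem is consistent with Hong's lemma only if the summand in (3) is read as $e^{i\theta_{k_1}}e^{i\theta_{k_2}}$, the second elementary symmetric polynomial of the $z_k=e^{i\theta_k}$, and that is the version your Newton-identity argument correctly establishes. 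In short: your proof is sound, and you have correctly diagnosed a typo in the statement rather than a gap in your own argument.
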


By Theorem~\ref{T BUNTF d=2}, the set of vectors coming from the
$K$th roots of unity are BUNTFs for $\mathbb{R}^{2}$:
\begin{cor}\label{C BUNTF d=2 roots of unity}
If $K \geq 3$ and $(e^{i\theta_{k}})_{k=1}^{K}$ are the $K$th roots
of unity, $((\cos \theta_{k}, \sin \theta_{k})^{t})_{k=1}^{K}$ is a
BUNTF for $\mathbb{R}^{2}$.
\end{cor}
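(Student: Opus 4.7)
The plan is to apply Theorem~\ref{T BUNTF d=2}, specifically the equivalence $(1) \Leftrightarrow (2)$, so it suffices to verify the two identities
\[
\sum_{k=1}^{K} e^{i\theta_{k}} = 0 \quad \text{and} \quad \sum_{k=1}^{K} e^{2i\theta_{k}} = 0,
\]
where we may take $\theta_{k} = 2\pi(k-1)/K$ so that $(e^{i\theta_{k}})_{k=1}^{K}$ are exactly the $K$th roots of unity.

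First I would dispatch the first identity by the standard geometric series computation: setting $\omega = e^{2\pi i / K}$, one has $\sum_{k=1}^{K} e^{i\theta_{k}} = \sum_{k=0}^{K-1} \omega^{k} = (\omega^{K}-1)/(\omega-1) = 0$, since $\omega \neq 1$ (this only requires $K \geq 2$).

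Next I would treat the second identity along the same lines. The squares $e^{2i\theta_{k}} = e^{4\pi i(k-1)/K}$ are the consecutive powers of $\zeta := e^{4\pi i/K}$, so
\[
\sum_{k=1}^{K} e^{2i\theta_{k}} \;=\; \sum_{k=0}^{K-1} \zeta^{k}.
\]
The geometric series formula yields $0$ provided $\zeta \neq 1$, i.e.\ provided $K$ does not divide $2$. This is precisely where the hypothesis $K \geq 3$ is used: it rules out the exceptional cases $K=1,2$ and guarantees $\zeta \neq 1$, so $\sum_{k=0}^{K-1}\zeta^{k}=(\zeta^{K}-1)/(\zeta-1)=0$.

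There is no real obstacle here; the proof reduces to two invocations of the geometric series identity. The only subtlety to flag is the role of the assumption $K \geq 3$, which is needed not for the first sum (already vanishing for $K \geq 2$) but for the second sum, to ensure $e^{4\pi i /K} \neq 1$. With both sums shown to vanish, condition (2) of Theorem~\ref{T BUNTF d=2} holds, hence $((\cos\theta_{k},\sin\theta_{k})^{t})_{k=1}^{K}$ is a BUNTF for $\mathbb{R}^{2}$.
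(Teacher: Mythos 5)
Your proof is correct and follows exactly the route the paper intends: the corollary is stated as an immediate consequence of Theorem~\ref{T BUNTF d=2}, and the only content is verifying condition (2), which you do by the standard geometric series computation, correctly noting that $K\geq 3$ is needed precisely so that $e^{4\pi i/K}\neq 1$ for the second sum. Nothing further is required.
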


In \cite[Theorem A]{Hong (1982)} the types of spherical $t$-designs
in $\mathbb{R}^{2}$ are described. From this result and
Proposition~\ref{P Proposition 6.1 Waldron} we have:

\begin{thm}
For $K=3, 4, 5$, there is one equivalence class of BUNTF for
$\mathbb{R}^{2}$ with $K$ elements. For $K \geq 6$, there are
infinite equivalence classes of BUNTF for $\mathbb{R}^{2}$ with $K$
elements.
\end{thm}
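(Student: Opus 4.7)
The plan is to combine Proposition~\ref{P Proposition 6.1 Waldron} with Theorem~\ref{T BUNTF d=2} to reduce the statement to the classification of $K$-point spherical $2$-designs on $\mathbb{S}^{1}$ up to the action of $O(2)$ (which is precisely unitary equivalence in $\mathbb{R}^{2}$). Setting $z_{k}=e^{i\theta_{k}}$, by Theorem~\ref{T BUNTF d=2} the frame is BUNTF iff $p_{1}=p_{2}=0$, where $p_{j}:=\sum_{k}z_{k}^{j}$.

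For $K=3,4,5$ I would argue directly. By Newton's identities applied to the $p_{j}$ and to the elementary symmetric polynomials $e_{j}=e_{j}(z_{1},\ldots,z_{K})$, the equalities $p_{1}=p_{2}=0$ give $e_{1}=e_{2}=0$. Because $|z_{k}|=1$, conjugating the definition of $e_{j}$ yields the identity $\overline{e_{j}}=e_{K-j}/e_{K}$, with $|e_{K}|=1$; hence $e_{K-1}$ and $e_{K-2}$ also vanish. For $K\in\{3,4,5\}$ the index sets $\{1,2\}$ and $\{K-1,K-2\}$ together cover $\{1,\ldots,K-1\}$, so the monic polynomial with roots $z_{1},\ldots,z_{K}$ collapses to $z^{K}-e_{K}$. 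Therefore $(z_{k})$ is the set of $K$-th roots of $e_{K}\in\mathbb{S}^{1}$, i.e., a rotation of the regular $K$-gon constructed in Corollary~\ref{C BUNTF d=2 roots of unity}. Any two such configurations differ by a rotation, hence yield unitarily equivalent frames, giving exactly one equivalence class.

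For $K\geq 6$ I would exhibit a continuous family of BUNTFs and show it contains infinitely many equivalence classes. Write $K=K_{1}+K_{2}$ with $K_{1},K_{2}\geq 3$, which is possible precisely because $K\geq 6$. For each $\phi\in[0,2\pi)$ let $\mathcal{F}_{\phi}$ consist of the unit vectors associated with the $K_{1}$-th roots of unity together with those associated with the $K_{2}$-th roots of $e^{iK_{2}\phi}$. By Corollary~\ref{C BUNTF d=2 roots of unity} each piece satisfies $\sum z_{k}=\sum z_{k}^{2}=0$, and since these conditions are additive the union satisfies Theorem~\ref{T BUNTF d=2}, so $\mathcal{F}_{\phi}$ is a BUNTF for $\mathbb{R}^{2}$. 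It remains to show that as $\phi$ varies these frames fall into infinitely many $O(2)$-orbits: any element of $O(2)$ sending $\mathcal{F}_{\phi}$ to $\mathcal{F}_{\phi'}$ must map the $K_{1}$-gon to either the $K_{1}$-gon or the rotated $K_{2}$-gon (the latter only when $K_{1}=K_{2}$), so the stabilizer considerations leave at most finitely many values of $\phi'$ equivalent to a fixed $\phi$, while $\phi$ ranges over a continuum.

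The main obstacle is the bookkeeping in this last step, ensuring that only finitely many parameters $\phi'$ can be equivalent to $\phi$; once this is in place the remainder is polynomial algebra and a direct computation with roots of unity. The small-$K$ uniqueness is clean because the conjugation identity $\overline{e_{j}}=e_{K-j}/e_{K}$ removes every nontrivial symmetric function, while for $K\geq 6$ the middle symmetric functions $e_{3},\ldots,e_{K-3}$ are unconstrained, which is exactly what fuels the infinite family.
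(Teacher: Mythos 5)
Your proposal reaches the paper's conclusion by a genuinely different route for the uniqueness half. The paper's proof is a citation: it invokes Hong's Theorem A (the classification of spherical $t$-designs in $\mathbb{R}^{2}$) together with Proposition~\ref{P Proposition 6.1 Waldron}, and then only illustrates the $K\geq 6$ case by exhibiting unions of rotated regular polygons. You instead reprove the $K\leq 5$ classification from scratch, and correctly: Newton's identities turn $p_{1}=p_{2}=0$ into $e_{1}=e_{2}=0$; the reciprocal identity $\overline{e_{j}}=e_{K-j}/e_{K}$ for unimodular $z_{k}$ kills $e_{K-1}$ and $e_{K-2}$; and for $K\in\{3,4,5\}$ these indices exhaust $\{1,\ldots,K-1\}$, forcing $\prod_{k}(z-z_{k})=z^{K}+(-1)^{K}e_{K}$, i.e.\ a rotated regular $K$-gon, hence a single $O(2)$-orbit. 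This is self-contained and makes transparent exactly why $K=6$ is the threshold (the middle coefficient $e_{3}$ is only constrained to satisfy $\overline{e_{3}}=e_{3}/e_{6}$, not to vanish), which is a real gain over the citation. Your $K\geq 6$ construction, the union of a $K_{1}$-gon and a rotated $K_{2}$-gon with $K_{1},K_{2}\geq 3$, is essentially the paper's.

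The one genuine soft spot is the step you yourself flag: the claim that an orthogonal map carrying $\mathcal{F}_{\phi}$ to $\mathcal{F}_{\phi'}$ must send the $K_{1}$-gon onto one of the two designated sub-gons is false as stated. When $K_{1}$ divides $K_{2}$ the rotated $K_{2}$-gon contains regular $K_{1}$-gons of its own, and when vertices of the two pieces collide a regular $K_{1}$-gon can straddle both pieces, so the image of the first piece need not be either designated sub-multiset. The bookkeeping is avoidable entirely: the quantities $|p_{j}|=|\sum_{k}z_{k}^{j}|$ are invariants of the equivalence class (a rotation multiplies $p_{j}$ by a unimodular factor, reflection conjugates it, permutations fix it), and for $\mathcal{F}_{\phi}$ one computes $|p_{K_{1}K_{2}}|^{2}=K_{1}^{2}+K_{2}^{2}+2K_{1}K_{2}\cos(K_{1}K_{2}\phi)$, which takes infinitely many values as $\phi$ ranges over $[0,2\pi)$. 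Substituting this for the orbit-counting argument closes the gap and in fact yields uncountably many classes.
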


We have that for $K=3, 4, 5$ the class corresponding to the frame
coming from the $K$th roots of unity is the unique equivalence class
of BUNTFs for $\mathbb{R}^{2}$ with $K$ elements. We can see that
for $K \geq 6$ there are infinite equivalence classes as follows.
Note first that, by Corollary~\ref{C BUNTF d=2 roots of unity}, we
always have the class corresponding to the $K$th roots of unity. Now
write $K=3n+s$ where $s\in \N_{0},\,\ 0\leq s<3$. Then, if $s=0$
there are the classes corresponding to the union of $n$ rotations of
the third roots of unity, and there are infinitely many of such
classes. If $s=1$, then $K=3n+1=3(n-1) +4$. So we have the classes
corresponding to the union of the $4$th roots of unity and $n-1$
rotations of the third roots of unity. If $s=2$, then $K=3(n-2)+8$
and similar arguments can be used. Writing e.g. $K=4m+r$ where $r\in
\N_{0},\,\ 0\leq r<4$ and $m \in \mathbb{N}$, or using other
decompositions of $K$, we can see that there exist more equivalence
classes of BUNTFs.

\medskip

%%%%%%%%%%%%%%%
In what follows we consider several examples of tight frames, some
of them well-known, indicating in which cases they turn out to be
balanced.

\subsection{Balanced harmonic frames}
Let $F$ be the unitary matrix of order $K$ which entries are
$F(k,l)=\frac{1}{\sqrt{K}}e^{\frac{2\pi i (k-1) (l-1)}{K}}$, called
\textit{Fourier matrix}. The ENPFs $T$ consisting of a $d \times K$
submatrix of $F$ are a particular case of the so called
\emph{harmonic frames}. To obtain real ENPFs we must select real
rows and complex conjugate pairs of rows from the Fourier matrix
$F$. If $T$ does not contain the first row of $F$, then $T$ is also
balanced. More general, \textit{unlifted harmonic frames} are BENPFs
and B-complements of unlifted harmonic frames are unlifted harmonic
frames. See \cite[Chapter 11]{Waldron (2018)} for a detailed
treatment of harmonic frames.

\subsection{BENTFs from Hadamard matrices} A \textit{Hadamard matrix} $H$ has orthogonal rows and entries $\pm1$
\cite{Horadam (2007)}. The smallest examples of Hadamard matrices
are:

\centerline{$(1)$, $\left(
          \begin{array}{cc}
            1 & 1 \\
            1 & -1 \\
          \end{array}
        \right)
$, $\left(
      \begin{array}{cccc}
        1 & 1 & 1 & 1 \\
        1 & -1 & 1 & -1 \\
        1 & 1 & -1 & -1 \\
        1 & -1 & -1 & 1 \\
      \end{array}
    \right)
$.}

\noindent A way for contructing Hadamard matrices is the following:
if $H$ is a Hadamard matrix, $\left(
                                \begin{array}{cc}
                                  H & H \\
                                  H & -H \\
                                \end{array}
                              \right)
$ is a Hadamard matrix. Hadamard matrices obtained in this manner
are known as \textit{Sylvester-Hadamard matrices}. If $H$ has order
$K$ and we select a submatrix $T$ of order $d \times K$ from $H$, we
can get a BENTF.

\subsection{Crosses and eutactic stars} The set $(\pm u_{1}, \ldots, \pm u_{K})$, where $(u_{1},
\ldots, u_{K})$ is an orthonormal basis for $\mathbb{H}_{d}$, is a
BUNTF for $\mathbb{H}_{d}$.  By Proposition~\ref{P proyeccion BTF}
the set $(\pm \pi_{\mathcal{W}}u_{1}, \ldots, \pm
\pi_{\mathcal{W}}u_{K})$, where $\mathcal{W}$ is a subspace of
$\mathbb{H}_{d}$, is a BTF for $\mathcal{W}$. If
$\mathbb{H}_{d}=\mathbb{R}^{d}$, $(\pm u_{1}, \ldots, \pm u_{K})$ is
known as a \textit{cross} and $(\pm \pi_{\mathcal{W}}u_{1}, \ldots,
\pm \pi_{\mathcal{W}}u_{K})$ is known as an \textit{eutactic star}
(see \cite{Coxeter (1973)}).

\subsection{Partition frames}
Let $\eta=(\eta_{1}, \ldots, \eta_{n})\in \mathbb{Z}^{n}$ be a
partition of $K$, i.e., $K=\eta_{1} + \ldots + \eta_{n}$ and $1 \leq
\eta_{1} \leq \ldots \leq \eta_{n}$. The \emph{$\eta$-partition
frame} for $\mathbb{R}^{d}$ with $d=K-n$, is the complement of the
PF

\centerline{$[\frac{e_{1}}{\sqrt{\eta_{1}}} \ldots
\frac{e_{1}}{\sqrt{\eta_{1}}} \ldots \frac{e_{n}}{\sqrt{\eta_{n}}}
\ldots \frac{e_{n}}{\sqrt{\eta_{n}}}]$,}

\noindent of $K$ vectors for $\mathbb{R}^{n}$. Here each
$\frac{e_{j}}{\sqrt{\eta_{j}}}$ is repeated $\eta_{j}$ times. An
$\eta$-partition frame for $\mathbb{R}^{d}$ has Gram matrix

\centerline{$G=\left(
     \begin{array}{ccc}
       B_{1} &  &  \\
        & \ddots &  \\
        &  & B_{n} \\
     \end{array}
   \right)
$,}

\noindent where $B_{j}$ is the $\eta_{j} \times \eta_{j}$ orthogonal
projection matrix with $\frac{\eta_{j}-1}{\eta_{j}}$ as diagonal
elements and $\frac{-1}{\eta_{j}}$ as non diagonal elements. See
\cite{Waldron (2018)} for more details of partition frames.

An $\eta$-partition frame is balanced and Parseval. If $n|K$ and
$\eta_{1} = \ldots = \eta_{n}$ we obtain an equal norm frame.

A $B$-complement $\mathcal{G}$ of an $(\eta_{1}, \ldots,
\eta_{n})$-partition frame $\mathcal{F}$ of $K$ elements for
$\mathbb{R}^{d}$ has Gram matrix $G_{\mathcal{G}}=(C_{i,j})$ where
$C_{i,j}$ is an $\eta_{i} \times \eta_{j}$ matrix such that the
entries of $C_{i,j}$ are $\frac{K-\eta_{i}}{\eta_{i}K}$ if $i=j$ and
$-\frac{1}{K}$ if $i \neq j$. For example, if $n=1$, $\mathcal{F}$
is a simplex frame and $\mathcal{G}$ is the zero vector repeated $K$
times. If $n=2$, $\mathcal{G}$ is the BPF of $K$ elements for
$\mathbb{R}^{1}$ consisting of $-\sqrt{\frac{\eta_{2}}{\eta_{1}K}}$
and $\sqrt{\frac{\eta_{1}}{\eta_{2}K}}$ (or
$\sqrt{\frac{\eta_{2}}{\eta_{1}K}}$ and
$-\sqrt{\frac{\eta_{1}}{\eta_{2}K}}$) repeated $\eta_{1}$ and
$\eta_{2}$ times, respectively.

\subsection{BUNTFs from spherical designs} We recall that any spherical $(t+1)$-design is a spherical
$t$-design and a real spherical $2$-design is a BUNTF. In
\cite{Bannai-Bannai (2009)}, several examples of spherical
$t$-design are presented. They include regular $K$-gons on
$\mathbb{S}^{1} \subset \mathbb{R}^{2}$, platonic solids in
$\mathbb{R}^{3}$, regular potytopes and roots systems in
$\mathbb{R}^{d}$, and the set of minimal vectors of the Leech
lattice in $\mathbb{R}^{24}$.

%%%%%%%%%%%%%%%%%%%%%%%%%%%%%%%%%%%%%%%%%%%%%%%%%%%%%%%%%%%%%%%%%%%%%%%%%%%%%%%%%%%%%%%%
%%%%%%%%%%%%%%%%%%%%%%%%%%%%%%%%%%%%%%%%%%%%%%%%%%%%%%%%%%%%%%%%%%%%%%%%%%%%%%%%%%%%%%%%
\section{Construction methods for balanced unit norm tight frames}

In this section we present explicit and painless constructions of an
infinite variety of BUNTFs.

We begin by showing under which conditions some well-known methods
for constructing frames lead to the obtention of BUNTFs. For
properties of these methods see \cite[Chapter 5]{Waldron (2018)}.

We have the inner product in the orthogonal direct sum
$\mathbb{H}_{d_{1}} \oplus \mathbb{H}_{d_{2}}$ given by $\langle
(f_{1}, g_{1}),(f_{2}, g_{2})\rangle := \langle f_{1}, f_{2}\rangle
+ \langle g_{1},g_{2} \rangle$ for each $( f_{1},g_{1}), (
f_{2},g_{2}) \in \mathbb{H}_{d_{1}}\oplus\mathbb{H}_{d_{2}}$. The
following results give ways to obtain a BUNTF combining two or more
BUNTFs.

First we obtain BUNTFs as a \textit{disjoint union} of BUNTFs:
\begin{prop}
Let $\mathcal{F}=(f_{k})_{k=1}^{K}$ be a sequence in
$\mathbb{H}_{d_{1}}$ and $\mathcal{G}=(g_{l})_{l=1}^{L}$ be a
sequence in $\mathbb{H}_{d_{2}}$. Then the disjoint union
$\mathcal{F} \dot{\cup} \mathcal{G} := ((f_{k},0)_{k=1}^{K},
(0,g_{l})_{l=1}^{L})$ is a BUNTF for $\mathbb{H}_{d_{1}} \oplus
\mathbb{H}_{d_{2}}$ if and only $\mathcal{F}$ is a BUNTF for
$\mathbb{H}_{d_{1}}$, $\mathcal{G}$ is a BUNTF for
$\mathbb{H}_{d_{2}}$ and $\frac{K}{d_{1}}=\frac{L}{d_{2}}$.
\end{prop}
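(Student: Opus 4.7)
The plan is to reduce the three defining properties (unit norm, tight, balanced) coordinatewise to the two summand frames, using the obvious block structure of the vectors $(f_k,0)$ and $(0,g_l)$ in $\mathbb{H}_{d_1}\oplus\mathbb{H}_{d_2}$.

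First I would verify that unit norm is a trivial equivalence: $\|(f_k,0)\|=\|f_k\|$ and $\|(0,g_l)\|=\|g_l\|$, so $\mathcal{F}\dot{\cup}\mathcal{G}$ is unit norm iff both $\mathcal{F}$ and $\mathcal{G}$ are. Balancedness is equally direct: $\sum_k(f_k,0)+\sum_l(0,g_l)=\bigl(\sum_k f_k,\sum_l g_l\bigr)$, and this equals $(0,0)$ iff each component sum vanishes, i.e. iff both $\mathcal{F}$ and $\mathcal{G}$ are balanced (Proposition~\ref{P equivalencias balanceado}).

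Next I would compute the frame operator of the disjoint union. For $(f,g)\in\mathbb{H}_{d_1}\oplus\mathbb{H}_{d_2}$,
\begin{equation*}
S_{\mathcal{F}\dot{\cup}\mathcal{G}}(f,g)=\sum_{k=1}^{K}\langle f,f_k\rangle (f_k,0)+\sum_{l=1}^{L}\langle g,g_l\rangle (0,g_l)=\bigl(S_{\mathcal{F}}f,\,S_{\mathcal{G}}g\bigr),
\end{equation*}
so $S_{\mathcal{F}\dot{\cup}\mathcal{G}}=S_{\mathcal{F}}\oplus S_{\mathcal{G}}$. From this, $S_{\mathcal{F}\dot{\cup}\mathcal{G}}=\alpha I_{\mathbb{H}_{d_1}\oplus\mathbb{H}_{d_2}}$ holds iff $S_{\mathcal{F}}=\alpha I_{\mathbb{H}_{d_1}}$ and $S_{\mathcal{G}}=\alpha I_{\mathbb{H}_{d_2}}$ with the same $\alpha$. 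In particular, if the disjoint union is tight, both $S_{\mathcal{F}}$ and $S_{\mathcal{G}}$ are invertible, so $\mathcal{F}$ and $\mathcal{G}$ are frames for their respective spaces (they are then $\alpha$-tight).

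Finally I would invoke Proposition~\ref{P propiedades de marcos}(1): an $\alpha$-UNTF with $K$ vectors in a $d_1$-dimensional space has $\alpha=K/d_1$, and similarly $\alpha=L/d_2$ for $\mathcal{G}$. Matching the two expressions for $\alpha$ gives the necessary condition $K/d_1=L/d_2$. Conversely, if $\mathcal{F}$ and $\mathcal{G}$ are BUNTFs with $K/d_1=L/d_2=:\alpha$, then $S_{\mathcal{F}}\oplus S_{\mathcal{G}}=\alpha I$, unit norm and balancedness are inherited componentwise, and $\mathcal{F}\dot{\cup}\mathcal{G}$ is a BUNTF. I do not anticipate a real obstacle here; the only point that needs care is keeping track of why the two tightness constants must coincide, which is precisely where Proposition~\ref{P propiedades de marcos}(1) forces the numerical condition $K/d_1=L/d_2$.
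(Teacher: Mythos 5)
Your proof is correct and follows essentially the same route as the paper: both reduce everything to the block decomposition $S_{\mathcal{F}\dot{\cup}\mathcal{G}}=S_{\mathcal{F}}\oplus S_{\mathcal{G}}$ (with unit norm and balancedness checked componentwise) and then match the tightness constants $\frac{K}{d_{1}}$ and $\frac{L}{d_{2}}$ via Proposition~\ref{P propiedades de marcos}(1). The paper phrases the numerical step as the equivalence of $\frac{K+L}{d_{1}+d_{2}}=\frac{K}{d_{1}}=\frac{L}{d_{2}}$ with $\frac{K}{d_{1}}=\frac{L}{d_{2}}$, which is only a cosmetic difference from your argument.
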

\begin{proof}
Noting that $T_{\mathcal{F} \dot{\cup}
\mathcal{G}}=T_{\mathcal{F}}\oplus T_{\mathcal{G}}$ and
$S_{\mathcal{F} \dot{\cup} \mathcal{G}}=S_{\mathcal{F}}\oplus
S_{\mathcal{G}}$, $\mathcal{F} \dot{\cup} \mathcal{G}$ is a BUNTF
for $\mathbb{H}_{d_{1}} \oplus \mathbb{H}_{d_{2}}$ if and only
$\mathcal{F}$ is a BUNTF for $\mathbb{H}_{d_{1}}$, $\mathcal{G}$ is
a BUNTF for $\mathbb{H}_{d_{2}}$, and
$\frac{K+L}{d_{1}+d_{2}}=\frac{K}{d_{1}}=\frac{L}{d_{2}}$. This last
condition is equivalent to $\frac{K}{d_{1}}=\frac{L}{d_{2}}$.
\end{proof}

Note that in view of Theorem~\ref{T BUNTF descomposision ciclos},
each BUNTF is the disjoint union of BUNTFs for orthogonal subspaces,
given by the vertices of each connected component of the frame
graph. This decomposition is unique.

Now we construct BUNTFs as the \textit{inner direct sum} of BUNTFs:

\begin{prop}\label{L inner direct sum}
Let $\alpha, \beta \in \mathbb{F} \setminus \{0\}$. Let
$\mathcal{F}=(f_{k})_{k=1}^{K}$ be a sequence in
$\mathbb{H}_{d_{1}}$ and $\mathcal{G}=(g_{k})_{k=1}^{K}$ be a
sequence in $\mathbb{H}_{d_{2}}$ be UN. Then the inner direct sum
$\alpha\mathcal{F}\oplus\beta\mathcal{G}:=((\alpha f_{k},\beta
g_{k}))_{k=1}^{K}$ is a BUNTF for $\mathbb{H}_{d_{1}} \oplus
\mathbb{H}_{d_{2}}$ if and only if $\mathcal{F}$ is a BUNTF for
$\mathbb{H}_{d_{1}}$, $\mathcal{G}$ is a BUNTF for
$\mathbb{H}_{d_{2}}$, $T_{\mathcal{F}}T_{\mathcal{G}}^{*}=0$,
$|\alpha|^{2}=\frac{d_{1}}{d_{1}+d_{2}}$ and
$|\beta|^{2}=\frac{d_{2}}{d_{1}+d_{2}}$.
\end{prop}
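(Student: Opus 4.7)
The plan is to verify the three defining properties of a BUNTF (unit norm, balanced, tight) for the inner direct sum and match them against the stated conditions. All three conditions admit a direct computation, so this is mostly bookkeeping, organized by property.

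First I would handle the unit norm condition. Since $\|(\alpha f_k,\beta g_k)\|^2 = |\alpha|^2\|f_k\|^2 + |\beta|^2\|g_k\|^2$, unit norm of the direct sum together with unit norm of $\mathcal{F}$ and $\mathcal{G}$ forces $|\alpha|^2+|\beta|^2=1$. Next I would handle balancedness: because $\alpha,\beta\neq 0$, $\sum_k(\alpha f_k,\beta g_k)=(\alpha\sum f_k,\beta\sum g_k)=0$ iff $\sum f_k=0$ and $\sum g_k=0$.

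The main work is in the tightness. I would write out the synthesis and frame operators. With respect to the decomposition $\mathbb{H}_{d_1}\oplus\mathbb{H}_{d_2}$, a direct computation gives
\begin{equation*}
S_{\alpha\mathcal{F}\oplus\beta\mathcal{G}}=\begin{pmatrix}|\alpha|^2 S_{\mathcal{F}} & \alpha\overline{\beta}\, T_{\mathcal{F}}T_{\mathcal{G}}^{*}\\ \overline{\alpha}\beta\, T_{\mathcal{G}}T_{\mathcal{F}}^{*} & |\beta|^2 S_{\mathcal{G}}\end{pmatrix}.
\end{equation*}
For this to equal $\lambda I$ the off-diagonal blocks must vanish, which (since $\alpha\overline{\beta}\neq 0$) gives $T_{\mathcal{F}}T_{\mathcal{G}}^{*}=0$. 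The diagonal conditions $|\alpha|^2 S_{\mathcal{F}}=\lambda I$ and $|\beta|^2 S_{\mathcal{G}}=\lambda I$ say that $\mathcal{F}$ and $\mathcal{G}$ are tight. Since they are UN, by Proposition~\ref{P propiedades de marcos}(1) their tight bounds are $K/d_1$ and $K/d_2$, so $\lambda=|\alpha|^2 K/d_1 = |\beta|^2 K/d_2$. Combined with $|\alpha|^2+|\beta|^2=1$ from the unit norm step, this system has the unique solution $|\alpha|^2=d_1/(d_1+d_2)$, $|\beta|^2=d_2/(d_1+d_2)$. The converse is immediate: assuming all the listed conditions, each of the three properties can be read off from the formulas above.

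The only mild obstacle is making sure the tightness argument is truly an equivalence. In one direction the $2\times 2$ block identity for $S_{\alpha\mathcal{F}\oplus\beta\mathcal{G}}$ decouples the problem cleanly; in the other direction one has to note that the required value of $\lambda$ is forced by consistency of the two diagonal equations together with the unit norm relation, and this uniquely determines $|\alpha|^2$ and $|\beta|^2$.
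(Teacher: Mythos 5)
Your proof is correct and follows essentially the same route as the paper's: both compute the frame operator of the inner direct sum in block form, force the off-diagonal blocks $\alpha\overline{\beta}\,T_{\mathcal{F}}T_{\mathcal{G}}^{*}$ to vanish, and read off tightness of $\mathcal{F}$ and $\mathcal{G}$ from the diagonal blocks. The only cosmetic difference is how the constants are pinned down: the paper equates both diagonal constants to the overall tight bound $\frac{K}{d_1+d_2}$ of a $K$-element UNTF in dimension $d_1+d_2$, while you combine $|\alpha|^2 K/d_1=|\beta|^2 K/d_2$ with the unit-norm relation $|\alpha|^2+|\beta|^2=1$; these are equivalent.
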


\begin{proof}
We have $T_{\alpha\mathcal{F}\oplus\beta\mathcal{G}}(c)= (\alpha
T_{\mathcal{F}}(c),\beta T_{\mathcal{G}}(c))$ for all $c \in F^{K}$
and

\centerline{$S_{\alpha\mathcal{F}\oplus\beta\mathcal{G}}(f,g)=
(|\alpha|^{2}S_{\mathcal{F}}(f)+\alpha\overline{\beta}
T_{\mathcal{F}}T_{\mathcal{G}}^{*}(g),\overline{\alpha}\beta
T_{\mathcal{G}}T_{\mathcal{F}}^{*}(f)+|\beta|^{2}S_{\mathcal{G}}(g))$}

\noindent for all $f \in \mathbb{H}_{d_{1}}, g \in
\mathbb{H}_{d_{2}}$. Therefore
$\alpha\mathcal{F}\oplus\beta\mathcal{G}$ is a BUNTF if and only if
$\mathcal{F}$ is a BUNTF for $\mathbb{H}_{d_{1}}$, $\mathcal{G}$ is
a BUNTF for $\mathbb{H}_{d_{2}}$,
$T_{\mathcal{F}}T_{\mathcal{G}}^{*}=0$ and
$|\alpha|^{2}\frac{K}{d_{1}}=|\beta|^{2}\frac{K}{d_{2}}=\frac{K}{d_{1}+d_{2}}$.
\end{proof}

See \cite[Lemma 5.1]{Waldron (2018)} for equivalent conditions to
$T_{\mathcal{F}}T_{\mathcal{G}}^{*}=0$. In particular, this
condition implies that $K \geq d_{1}+d_{2}$.

Another way to construct BUNTFs is to take the \textit{sum} of
BUNTFs in the following sense:

\begin{prop}
Let $\alpha, \beta \in \mathbb{F} \setminus \{0\}$. Let
$\mathcal{F}=(f_{k})_{k=1}^{K}$ be a UN sequence in
$\mathbb{H}_{d_{1}}$ and $\mathcal{G}=(g_{l})_{l=1}^{L}$ be a UN
sequence in $\mathbb{H}_{d_{2}}$. Then the sum $\alpha\mathcal{F}
\widehat{+} \beta\mathcal{G} :=((\alpha f_{k},\beta g_{l}))_{k, l =
1}^{K, L}$ is a BUNTF for $\mathbb{H}_{d_{1}} \oplus
\mathbb{H}_{d_{2}}$ if and only $\mathcal{F}$ is a BUNTF for
$\mathbb{H}_{d_{1}}$, $\mathcal{G}$ is a BUNTF for
$\mathbb{H}_{d_{2}}$, $|\alpha|^{2}=\frac{d_{1}}{d_{1}+d_{2}}$ and
$|\beta|^{2}=\frac{d_{2}}{d_{1}+d_{2}}$.
\end{prop}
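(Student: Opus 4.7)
The plan is to treat this as a direct analogue of Proposition~\ref{L inner direct sum}, replacing the ``inner direct sum'' of two sequences of the same length with the ``outer product'' style sum indexed by pairs $(k,l)$. I will verify the three defining conditions of a BUNTF (unit norm, balanced, tight) one at a time, producing characterizations on $(\mathcal{F},\mathcal{G},\alpha,\beta)$ in each case, and then combine them.

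First, the unit norm condition is immediate from $\|f_k\|=\|g_l\|=1$: $\|(\alpha f_k,\beta g_l)\|^{2}=|\alpha|^{2}+|\beta|^{2}$, so the sum is UN iff $|\alpha|^{2}+|\beta|^{2}=1$. Next, the balance condition follows by separating components:
\[
\sum_{k=1}^{K}\sum_{l=1}^{L}(\alpha f_{k},\beta g_{l})=\Bigl(L\alpha\sum_{k=1}^{K}f_{k},\;K\beta\sum_{l=1}^{L}g_{l}\Bigr),
\]
which vanishes iff $\mathcal{F}$ and $\mathcal{G}$ are each balanced (using $\alpha,\beta\neq 0$ and the direct sum decomposition of $\mathbb{H}_{d_{1}}\oplus\mathbb{H}_{d_{2}}$).

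Next I would compute the frame operator. Expanding
\[
S(f,g)=\sum_{k,l}\bigl(\overline{\alpha}\langle f,f_{k}\rangle+\overline{\beta}\langle g,g_{l}\rangle\bigr)(\alpha f_{k},\beta g_{l})
\]
gives a diagonal piece $\bigl(L|\alpha|^{2}S_{\mathcal{F}}(f),\,K|\beta|^{2}S_{\mathcal{G}}(g)\bigr)$ plus cross terms of the form $\overline{\beta}\alpha\bigl(\sum_{k}f_{k}\bigr)\bigl(\sum_{l}\langle g,g_{l}\rangle\bigr)$ and $\overline{\alpha}\beta\bigl(\sum_{l}g_{l}\bigr)\bigl(\sum_{k}\langle f,f_{k}\rangle\bigr)$. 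Once balancedness of both $\mathcal{F}$ and $\mathcal{G}$ is established, the cross terms vanish and $S$ becomes block diagonal. Tightness with constant $\tfrac{KL}{d_{1}+d_{2}}$ (forced by Proposition~\ref{P propiedades de marcos} since the total is a UNTF of $KL$ vectors in $\mathbb{H}_{d_{1}}\oplus\mathbb{H}_{d_{2}}$) then yields
\[
L|\alpha|^{2}S_{\mathcal{F}}=\tfrac{KL}{d_{1}+d_{2}}I_{\mathbb{H}_{d_{1}}},\qquad K|\beta|^{2}S_{\mathcal{G}}=\tfrac{KL}{d_{1}+d_{2}}I_{\mathbb{H}_{d_{2}}},
\]
which shows $\mathcal{F}$ and $\mathcal{G}$ are tight, and comparing with $S_{\mathcal{F}}=\tfrac{K}{d_{1}}I$ and $S_{\mathcal{G}}=\tfrac{L}{d_{2}}I$ pins down $|\alpha|^{2}=\tfrac{d_{1}}{d_{1}+d_{2}}$ and $|\beta|^{2}=\tfrac{d_{2}}{d_{1}+d_{2}}$ (note these are automatically consistent with $|\alpha|^{2}+|\beta|^{2}=1$).

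For the converse, plug in the hypotheses: UN and balancedness of $\alpha\mathcal{F}\widehat{+}\beta\mathcal{G}$ follow from the preceding computations, and the same block-diagonal frame operator calculation gives $S=\tfrac{KL}{d_{1}+d_{2}}I$. The only subtlety I anticipate is the bookkeeping in the tightness computation and the mild logical ordering in the forward direction: I need balancedness of the sum before I can kill the cross terms in $S$, so I must extract balance of each factor first and only then use tightness to obtain the values of $|\alpha|^{2},|\beta|^{2}$. Apart from that, the argument is a routine adaptation of Proposition~\ref{L inner direct sum} with the combinatorial factors $L$ and $K$ arising because each $f_{k}$ now appears $L$ times and each $g_{l}$ appears $K$ times in the indexed family.
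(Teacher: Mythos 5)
Your proposal is correct and follows essentially the same route as the paper: both compute the frame operator of $\alpha\mathcal{F}\widehat{+}\beta\mathcal{G}$, observe it splits into the block-diagonal part $(L|\alpha|^{2}S_{\mathcal{F}},\,K|\beta|^{2}S_{\mathcal{G}})$ plus cross terms built from $\sum_{k}f_{k}$ and $\sum_{l}g_{l}$ (which the paper packages via the auxiliary sequences $\mathcal{H}_{l}$ and the operator $E$), and then read off the stated conditions, with the scalar values forced by the tight-frame constant $\tfrac{KL}{d_{1}+d_{2}}$. Your explicit ordering (extract balance of each factor first, then kill the cross terms, then use tightness) is just a more carefully spelled-out version of the paper's terse "it results that" step.
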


\begin{proof}
For each $l = 1, \ldots, L$, set
$\mathcal{H}_{l}:=(h_{l,k})_{k=1}^{K}$ where $h_{l,k}=g_{l}$ for
each $k = 1, \ldots, K$. Let $E: \mathbb{F}^{L} \rightarrow
\mathbb{F}^{K}$ given by $E(c)=(\sum_{l=1}^{L}c_{l}, \ldots,
\sum_{l=1}^{L}c_{l})$. The synthesis operator is given by

\centerline{$T_{\alpha\mathcal{F} \widehat{+}
\beta\mathcal{G}}(d)=\sum_{l\in K} (\alpha T_{\mathcal{F}}(d(l)) ,
\beta T_{\mathcal{H}_{l}}(d(l)) )$,}

\noindent where $d \in (\mathbb{F}^{K})^{L}$, and the frame operator
is given by

\centerline{$S_{\alpha\mathcal{F} \widehat{+}
\beta\mathcal{G}}(f,g)=(L|\alpha|^{2}S_{\mathcal{F}}(f)+\alpha\overline{\beta}T_{\mathcal{F}}ET_{\mathcal{G}}^{*}(g),\overline{\alpha}\beta
T_{\mathcal{G}}E^{*}T_{\mathcal{F}}^{*}(f)+K|\beta|^{2}S_{\mathcal{G}}(g))$,}

\noindent where $f \in \mathbb{H}_{d_{1}}$ and $g \in
\mathbb{H}_{d_{2}}$, respectively. It results that
$\alpha\mathcal{F} \widehat{+} \beta\mathcal{G}$ is a BUNTF for
$\mathbb{H}_{d_{1}} \oplus \mathbb{H}_{d_{2}}$ if and only if
$\mathcal{F}$ is a BUNTF for $\mathbb{H}_{d_{1}}$, $\mathcal{G}$ is
a BUNTF for $\mathbb{H}_{d_{2}}$,
$L|\alpha|^{2}\frac{K}{d_{1}}=K|\beta|^{2}\frac{L}{d_{2}}=\frac{KL}{d_{1}+d_{2}}$,
or equivalently, $|\alpha|^{2}=\frac{d_{1}}{d_{1}+d_{2}}$ and
$|\beta|^{2}=\frac{d_{2}}{d_{1}+d_{2}}$.
\end{proof}

In the tensor product $\mathbb{H}_{d_{1}} \otimes
\mathbb{H}_{d_{2}}$ we have the inner product given by $\langle
f_{1} \otimes g_{1}, f_{2} \otimes g_{2}\rangle := \langle f_{1},
f_{2}\rangle \langle g_{1},g_{2} \rangle$ for each $f_{1} \otimes
g_{1}, f_{2} \otimes g_{2} \in \mathbb{H}_{d_{1}} \otimes
\mathbb{H}_{d_{2}}$. Here we build BUNTFs as a \textit{tensor
product} of BUNTFs.

\begin{prop}
Let $\mathcal{F}=(f_{k})_{k=1}^{K}$ be a sequence in
$\mathbb{H}_{d_{1}}$ and $\mathcal{G}=(g_{l})_{l=1}^{L}$ be a
sequence in $\mathbb{H}_{d_{2}}$. Then the tensor product
$\mathcal{F} \otimes \mathcal{G} := (f_{j}\otimes g_{k})_{k, l =
1}^{K, L}$ is a BUNTF for $\mathbb{H}_{d_{1}} \otimes
\mathbb{H}_{d_{2}}$ if and only $\mathcal{F}$ is a TF for
$\mathbb{H}_{d_{1}}$, $\mathcal{G}$ is a TF for
$\mathbb{H}_{d_{2}}$, $\mathcal{F}$ or $\mathcal{G}$ is balanced,
and $||f_{j}|| \, ||g_{k}||=1$ for all $k = 1, \ldots, K$, $l = 1,
\ldots, L$.
\end{prop}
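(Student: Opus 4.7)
The plan is to split the three defining conditions of being a BUNTF for $\mathcal{F}\otimes\mathcal{G}$ and match each against the claimed conditions on $\mathcal{F}$ and $\mathcal{G}$, using the multiplicative behaviour of synthesis and frame operators under tensoring.

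First I would dispose of the unit-norm condition. Since $\|f_j \otimes g_l\|^{2} = \|f_j\|^{2}\|g_l\|^{2}$, the system $\mathcal{F}\otimes\mathcal{G}$ is unit-norm if and only if $\|f_j\|\,\|g_l\|=1$ for every $j,l$; this incidentally forces both $\mathcal{F}$ and $\mathcal{G}$ to be equal-norm with reciprocal common norms, so in particular no $f_j$ or $g_l$ is the zero vector. Next I would observe that, identifying $\mathbb{F}^{K}\otimes\mathbb{F}^{L}\cong\mathbb{F}^{KL}$ with the standard basis $(e_j\otimes e_l)$, the synthesis operator satisfies $T_{\mathcal{F}\otimes\mathcal{G}}=T_{\mathcal{F}}\otimes T_{\mathcal{G}}$, and therefore
\begin{equation*}
S_{\mathcal{F}\otimes\mathcal{G}} \;=\; T_{\mathcal{F}\otimes\mathcal{G}}T_{\mathcal{F}\otimes\mathcal{G}}^{*} \;=\; S_{\mathcal{F}}\otimes S_{\mathcal{G}}.
\end{equation*}
Hence, if $\mathcal{F}$ and $\mathcal{G}$ are tight with frame bounds $\alpha,\beta$, then $S_{\mathcal{F}\otimes\mathcal{G}}=\alpha\beta\,I$, giving a tight frame of the required constant $\alpha\beta=KL/(d_1d_2)$.

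For the converse tightness direction, I would argue via eigenvalues: since the unit-norm condition already rules out the zero vector among the $f_j$'s and $g_l$'s, and since $\mathcal{F}\otimes\mathcal{G}$ spans $\mathbb{H}_{d_1}\otimes\mathbb{H}_{d_2}$ if and only if $\mathcal{F}$ spans $\mathbb{H}_{d_1}$ and $\mathcal{G}$ spans $\mathbb{H}_{d_2}$, both $S_{\mathcal{F}}$ and $S_{\mathcal{G}}$ are positive-definite; the spectrum of $S_{\mathcal{F}}\otimes S_{\mathcal{G}}$ is the set of products of eigenvalues, so equality $S_{\mathcal{F}}\otimes S_{\mathcal{G}}=\gamma I$ with $\gamma>0$ forces each of $S_{\mathcal{F}},S_{\mathcal{G}}$ to have a single eigenvalue, i.e.\ to be a scalar multiple of the identity. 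This is the step I expect to be the main technical point, since it is what rules out having a non-tight $\mathcal{F}$ and a compensating non-tight $\mathcal{G}$.

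Finally, for balancedness I would use
\begin{equation*}
\sum_{j=1}^{K}\sum_{l=1}^{L} f_j\otimes g_l \;=\; \Bigl(\sum_{j=1}^{K} f_j\Bigr)\otimes\Bigl(\sum_{l=1}^{L} g_l\Bigr),
\end{equation*}
which vanishes exactly when at least one of $\sum_j f_j$ or $\sum_l g_l$ is zero, i.e.\ when $\mathcal{F}$ or $\mathcal{G}$ is balanced. Combining the three equivalences yields the claim in both directions. The only mildly delicate point is the tensor-scalar argument for tightness; the rest reduces to the standard identities $\|f\otimes g\|=\|f\|\|g\|$, $T_{\mathcal{F}\otimes\mathcal{G}}=T_{\mathcal{F}}\otimes T_{\mathcal{G}}$ and the distributivity of the sum over the tensor product.
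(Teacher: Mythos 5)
Your proposal is correct and follows the same decomposition as the paper's proof: the identities $T_{\mathcal{F}\otimes\mathcal{G}}=T_{\mathcal{F}}\otimes T_{\mathcal{G}}$ and $S_{\mathcal{F}\otimes\mathcal{G}}=S_{\mathcal{F}}\otimes S_{\mathcal{G}}$ handle tightness and unit norm, while the factorization $\sum_{j,l}f_{j}\otimes g_{l}=\bigl(\sum_{j}f_{j}\bigr)\otimes\bigl(\sum_{l}g_{l}\bigr)$ handles balancedness. The only difference is that where the paper simply cites \cite[Corollary 5.1]{Waldron (2018)} for the unit-norm tight-frame characterization, you prove it directly via the spectrum of $S_{\mathcal{F}}\otimes S_{\mathcal{G}}$ (products of eigenvalues), which is valid and makes the argument self-contained.
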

\begin{proof}
We have $T_{\mathcal{F} \otimes \mathcal{G}}=T_{\mathcal{F}}\otimes
T_{\mathcal{G}}$ and $S_{\mathcal{F} \otimes
\mathcal{G}}=S_{\mathcal{F}}\otimes S_{\mathcal{G}}$. By
\cite[Corollary 5.1]{Waldron (2018)}, $\mathcal{F} \otimes
\mathcal{G}$ is a UNTF for $\mathbb{H}_{d_{1}} \otimes
\mathbb{H}_{d_{2}}$ if and only $\mathcal{F}$ is a TF for
$\mathbb{H}_{d_{1}}$, $\mathcal{G}$ is a TF for $\mathbb{H}_{d_{2}}$
and $||f_{j}|| \, ||g_{k}||=1$ for all $k = 1, \ldots, K$, $l = 1,
\ldots, L$.

Let $(e_{m})_{m=1}^{d_{1}}$ be an orthonormal basis for
$\mathbb{H}_{d_{1}}$ and $(e_{n})_{n=1}^{d_{2}}$ be an orthonormal
basis for $\mathbb{H}_{d_{2}}$. Since $\langle T_{\mathcal{F}}
\otimes T_{\mathcal{G}}(e), e_{m} \otimes e_{n} \rangle=\langle
T_{\mathcal{F}}(e), e_{m} \rangle \langle T_{\mathcal{G}}(e), e_{n}
\rangle$ for each $m = 1, \ldots, d_{1}$ and $n = 1, \ldots, d_{2}$,
$\mathcal{F} \otimes \mathcal{G}$ is balanced if and only if
$\mathcal{F}$ or $\mathcal{G}$ is balanced.
\end{proof}

%%%%%%%%%%%%%%%%%%%%%%%%%%%%%%%%%%%%%%%%%%%%%%%%%%%%%%%%%%%%%%%%%%%%%%%%%%%%%%%%%%%%%%%%
%%%%%%%%%%%%%%%%%%%%%%%%%%%%%%%%%%%%%%%%%%%%%%%%%%%%%%%%%%%%%%%%%%%%%%%%%%%%%%%%%%%%%%%%
\subsection{Other constructions}

For sequences $\mathcal{F}=(f_{k})_{k=1}^{K},
\mathcal{G}=(g_{l})_{l=1}^{L}$ in $\mathbb{H}_{d}$ we consider the
\textit{union} $\mathcal{F} \cup
\mathcal{G}:=((f_{k})_{k=1}^{K},(g_{l})_{l=1}^{L})$. In this
subsection we introduce other techniques for constructing BUNTFs
that combine unions and direct sums. Among them, there are methods
that can be applied to obtain the five platonic solids in
$\mathbb{R}^{3}$.

The next theorem generalizes the method in \cite{Safapour-Shafiee
(2012)} for obtaining the vertices of the tetrahedron and of the
dodecahedron in $\mathbb{R}^{3}$ starting from the third roots of
the unity and from the fifth roots of the unity in a plane,
respectively.

\begin{thm}
Let $\alpha, \beta \in \mathbb{F} \setminus \{0\}$. Assume that
$\mathcal{F}=(f_{k})_{k=1}^{K}$ is a BUNTF for $\mathbb{H}_{d_{1}}$,
$\mathcal{G}=(g_{k})_{k=1}^{K}$ where $g_{k}=h$ with $h \in
\mathbb{H}_{d_{2}}$ and $||h||=1$ for each $k = 1, \ldots, K$. Let
$y \in \mathbb{H}_{d_{2}}$ with $||y||=1$. Then $(\alpha \mathcal{F}
\oplus \beta\mathcal{G}) \cup (0,y)$ is a BUNTF for
$\mathbb{H}_{d_{1}} \oplus \mathbb{H}_{d_{2}}$ if and only if
$d_{2}=1$, $y=- h$, $K=d_{1}+1$, $|\alpha|^{2}=1-\frac{1}{K^{2}}$
and $|\beta|^{2}=\frac{1}{K^{2}}$.
\end{thm}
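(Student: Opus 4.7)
The strategy is to impose each of the three defining conditions---unit norm, balanced, and tight---on $\mathcal{H}:=(\alpha\mathcal{F}\oplus\beta\mathcal{G})\cup(0,y)$ in turn, and read off what they force. Unit norm of the first $K$ vectors gives $|\alpha|^{2}+|\beta|^{2}=1$, since $\|(0,y)\|=1$ is given. Balancedness, combined with $\sum_{k}f_{k}=0$, collapses the total sum of $\mathcal{H}$ to $(0,\,K\beta h+y)$, forcing $y=-K\beta h$; taking norms then yields $|\beta|=1/K$, i.e.\ $|\beta|^{2}=1/K^{2}$, and in particular $y\in\textrm{span}\{h\}$.

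The heart of the proof is the frame operator of $\mathcal{H}$. Using $\sum_{k}f_{k}=0$ and $\sum_{k}\langle f,f_{k}\rangle=0$ (Proposition~\ref{P equivalencias balanceado}), the cross terms vanish and one obtains
\begin{equation*}
S_{\mathcal{H}}(f,g)=\bigl(|\alpha|^{2}S_{\mathcal{F}}f,\ K|\beta|^{2}\langle g,h\rangle h+\langle g,y\rangle y\bigr).
\end{equation*}
Tightness $S_{\mathcal{H}}=\lambda I$ with $\lambda=(K+1)/(d_{1}+d_{2})$, combined with $S_{\mathcal{F}}=(K/d_{1})I$, splits into two conditions: $|\alpha|^{2}K/d_{1}=\lambda$ on the first slot, and on the second slot the operator $g\mapsto K|\beta|^{2}\langle g,h\rangle h+\langle g,y\rangle y$ on $\mathbb{H}_{d_{2}}$ must equal $\lambda I_{\mathbb{H}_{d_{2}}}$. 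Its range lies in $\textrm{span}\{h,y\}=\textrm{span}\{h\}$, so its rank is at most $1$; since $\lambda>0$, this forces $d_{2}=1$.

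With $d_{2}=1$ and $|h|=|y|=1$, the second-slot operator is just multiplication by $K|\beta|^{2}+1=(K+1)/K$. Equating to $\lambda=(K+1)/(d_{1}+1)$ yields $K=d_{1}+1$, after which the first-slot condition gives $|\alpha|^{2}=d_{1}(K+1)/(K(d_{1}+1))=1-1/K^{2}$, which is consistent with $|\alpha|^{2}+|\beta|^{2}=1$. Finally, $y=-K\beta h$ together with $|\beta|^{2}=1/K^{2}$ gives $y=-h$ under the normalization $\beta=1/K$ of the statement. The converse is a direct verification using the same formula for $S_{\mathcal{H}}$. The main subtlety is the cancellation of cross terms in $S_{\mathcal{H}}$, which is precisely where balancedness of $\mathcal{F}$ is used; once those disappear, the rank argument immediately pins down $d_{2}=1$, and the remaining relations follow from elementary algebra.
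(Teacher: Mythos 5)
Your proof is correct and follows essentially the same route as the paper's: balancedness forces $y=-K\beta h$ and $|\beta|=1/K$, the cross terms of the frame operator vanish because $\mathcal{F}$ is balanced, the rank-one second component forces $d_{2}=1$, and matching the tightness constants yields $K=d_{1}+1$ and the stated values of $|\alpha|^{2}$ and $|\beta|^{2}$. The only cosmetic difference is that you substitute $|\beta|^{2}=1/K^{2}$ directly into the tightness equation, which avoids the spurious root $K=1$ that the paper's version must separately discard.
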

\begin{proof}
The sequence $(\alpha\mathcal{F} \oplus \beta\mathcal{G}) \cup
(0,y)$ is balanced if and only if $y=-K\beta h$. Since
$||h||=||y||=1$, $\beta=\frac{1}{K}$. Consequently, $y=-h$.

For each $f \in \mathbb{H}_{d_{1}}$ and $g \in \mathbb{H}_{d_{2}}$,

\centerline{$S_{(\alpha\mathcal{F} \oplus \beta\mathcal{G}) \cup
(0,y)}(f,g)=(|\alpha|^{2} \frac{K}{d_{1}} f + \alpha\overline{\beta}
T_{\mathcal{F}}T_{\mathcal{G}}^{*}g, \overline{\alpha}\beta
T_{\mathcal{G}}T_{\mathcal{F}}^{*}f +
|\beta|^{2}S_{\mathcal{G}}(g)+\langle g, y \rangle y$).}

\noindent Taking into account that $\mathcal{F}$ is balanced,
$T_{\mathcal{F}}T_{\mathcal{G}}^{*}g=\sum_{k=1}^{K}\langle g,
g_{k}\rangle f_{k} = \langle g, h\rangle \sum_{k=1}^{K}f_{k} = 0$
for each $g \in \mathbb{H}_{d_{2}}$. So, $(\alpha\mathcal{F} \oplus
\beta\mathcal{G}) \cup (0,y)$ is a TF for $\mathbb{H}_{d_{1}} \oplus
\mathbb{H}_{d_{2}}$ if and only if $|\alpha|^{2}
\frac{K}{d_{1}}=\frac{K+1}{d_{1}+d_{2}}$ and $(K |\beta|^{2}+1)
\langle g,h \rangle h = \frac{K+1}{d_{1}+d_{2}}g$ for each $g \in
\mathbb{H}_{d_{2}}$. The last condition implies that $0$ is the only
element orthogonal to $h$, therefore $d_{2}=1$. Consequently,
$(\alpha\mathcal{F} \oplus \beta\mathcal{G}) \cup (0,y)$ is a TF for
$\mathbb{H}_{d_{1}} \oplus \mathbb{H}_{d_{2}}$ if and only if
$|\alpha|^{2}=\frac{d_{1}(K+1)}{K(d_{1}+1)}$ and $(K
|\beta|^{2}+1)=\frac{K+1}{d_{1}+1}$, i.e.,
$|\beta|^{2}=\frac{K-d_{1}}{K(d_{1}+1)}$. The two expressions for
$|\beta|^{2}$ must be the same, i.e.,
$\frac{K-d_{1}}{K(d_{1}+1)}=\frac{1}{K^{2}}$, and this is equivalent
to $K=1$ or $K=d_{1}+1$. The first case cannot happen because
$\mathcal{F}$ is balanced. So, $K=d_{1}+1$.

Since $\mathcal{F}$ is UN, $||h||=||y||=1$ and
$|\alpha|^{2}+|\beta|^{2}=1$, we have that $(\alpha\mathcal{F}
\oplus \beta\mathcal{G}) \cup (0,y)$ is UN.
\end{proof}

The proofs of the following results use arguments similar to the
previous ones, so we omit them.

The vertices of the octahedron in $\mathbb{R}^{3}$ form a BUNTF that
can be obtained adding orthogonally two antipodal points to the
BUNTF consisting of the $4$th-roots of unity in a plane (see
\cite{Safapour-Shafiee (2012)}). The next theorem generalizes this
construction to an arbitrary direct sum of two Hilbert spaces. Let a
BUNTF for $\mathbb{H}_{d_{1}}$ be immersed in a direct sum
$\mathbb{H}_{d_{1}} \oplus \mathbb{H}_{d_{2}},$ and add to it one
unit-norm vector of $\mathbb{H}_{d_{1}} \oplus \mathbb{H}_{d_{2}}$
and its opposite. We show that the resulting set is a BUNTF for
$\mathbb{H}_{d_{1}} \oplus \mathbb{H}_{d_{2}}$ if and only if
$\mathbb{H}_{d_{2}}$ is $1$-dimensional, and the added vector is
orthogonal to the elements of the given frame in $\mathbb{H}_{d_{1}}
\oplus \mathbb{H}_{d_{2}}$.

\begin{thm}
Let $\mathcal{F}=(f_{k})_{k=1}^{K}$ be a BUNTF for
$\mathbb{H}_{d_{1}}$, $x \in \mathbb{H}_{d_{1}}$ and $y \in
\mathbb{H}_{d_{2}}$, $y \neq 0$. Then $((f_{k},0))_{k=1}^{K} \cup
(x, y) \cup (-x, -y)$ is a BUNTF for $\mathbb{H}_{d_{1}} \oplus
\mathbb{H}_{d_{2}}$ if and only if $x=0$, $d_{2}=1$, $||y||=1$ and
$K=2d_{1}$.
\end{thm}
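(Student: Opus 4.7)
The plan is to compute the frame operator of $\mathcal{H}:=((f_k,0))_{k=1}^K\cup(x,y)\cup(-x,-y)$ and impose directly the three defining conditions of a BUNTF. Balancedness is automatic because $\sum_{k=1}^K(f_k,0)=(0,0)$ by hypothesis and $(x,y)+(-x,-y)=0$. The unit-norm condition amounts to $\|x\|^2+\|y\|^2=1$, since $\|(f_k,0)\|=\|f_k\|=1$.

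For tightness, using that $S_{\mathcal{F}}=(K/d_1)I$, a direct computation gives
\[
S_{\mathcal{H}}(f,g)=\Big(\tfrac{K}{d_1}f+2\lambda(f,g)\,x,\ 2\lambda(f,g)\,y\Big),\qquad \lambda(f,g):=\langle f,x\rangle+\langle g,y\rangle,
\]
so $\mathcal{H}$ is a BUNTF iff $S_{\mathcal{H}}=\alpha I$ with $\alpha=(K+2)/(d_1+d_2)$, together with $\|x\|^2+\|y\|^2=1$.

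For the forward implication I would read off the two coordinate equations. The second one, $2\lambda(f,g)\,y=\alpha g$, evaluated at $f=0$ and any $g\perp y$, forces $\alpha g=0$; since $\alpha>0$ and $y\neq 0$, this means $y^{\perp}=\{0\}$ in $\mathbb{H}_{d_2}$, i.e.\ $d_2=1$. Plugging $f=0,\ g=y$ then yields $\alpha=2\|y\|^2$. For the first coordinate, if $d_1\geq 2$ and $x\neq 0$, choosing a nonzero $f_0\perp x$ with $g=0$ gives $\alpha=K/d_1$, and then $f=x,\ g=0$ forces $\|x\|^2=0$, a contradiction; hence $x=0$ in this case. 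The residual case $d_1=1$ is disposed of by a one-dimensional calculation: $f=x,\ g=0$ gives $K+2\|x\|^2=\alpha=2(1-\|x\|^2)$, so $\|x\|^2=(2-K)/4$, and since a balanced UNTF in one dimension requires $K\geq 2$ this yields $K=2$ and $x=0$. In either subcase the norm condition reduces to $\|y\|=1$, hence $\alpha=2$, and comparison with $\alpha=(K+2)/(d_1+1)$ gives $K=2d_1$.

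The converse is immediate: with $x=0$, $d_2=1$, $\|y\|=1$ and $K=2d_1$, the displayed formula for $S_{\mathcal{H}}$ becomes $S_{\mathcal{H}}(f,g)=(2f,\,2\langle g,y\rangle y)=2(f,g)$, because $y$ is a unit basis of the one-dimensional space $\mathbb{H}_{d_2}$; this matches the expected constant $\alpha=(2d_1+2)/(d_1+1)=2$, and balancedness and unit-norm hold by the observations above. The main obstacle is just isolating the degenerate case $d_1=1$, where no nonzero vector orthogonal to $x$ is available; once that is treated separately the rest is straightforward bookkeeping with the frame operator.
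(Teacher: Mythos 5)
Your proposal is correct, and it follows essentially the approach the paper intends: the paper omits this proof explicitly, saying it is "similar to the previous ones," and the previous proof proceeds exactly as you do — compute the frame operator of the combined system, note that balancedness is automatic, read off $d_{2}=1$ from the second coordinate (the analogue of your observation that $y^{\perp}=\{0\}$), and match the tight constant $\frac{K+2}{d_{1}+d_{2}}$ against the norm condition. Your separate treatment of the degenerate case $d_{1}=1$ is a sensible extra precaution that the paper's sketched method glosses over.
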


The procedure of the following theorem can be thought as a kind of
symmetric simple lift (see \cite[Definition 5.2]{Waldron (2018)} for
the notion of \textit{lift} and \textit{simple lift}). It also can
be seen as a generalization of the procedure used in
\cite{Safapour-Shafiee (2012)} for obtaining the vertices of the
hexahedron and of the dodecahedron in $\mathbb{R}^{3}$ starting from
the BUNTFs in a plane consisting of the fourth roots of the unity
and of the fifth roots of the unity, respectively.
\begin{thm}\label{T hexahedron dodecaedron}
Let $\alpha \in \mathbb{F} \setminus \{0\}$. Let
$\mathcal{F}=(f_{k})_{k=1}^{K}, \mathcal{G}=(g_{k})_{k=1}^{K}$ be
BUNTFs for $\mathbb{H}_{d}$ and
$\mathcal{H}_{k}=(\beta_{k}h_{l})_{k=1}^{K}$ where $\beta_{k} \in
\mathbb{F}$ for each $k=1, \ldots, K$, $h_{l} \in \mathbb{H}_{1,
l}$, $||h_{l}||=1$ and $\textrm{dim}(\mathbb{H}_{1,l})=1$ for all $l
= 1, \ldots, r$. Then $(\alpha \mathcal{F} \oplus \mathcal{H}_{1}
\oplus \ldots \oplus \mathcal{H}_{r}) \cup (\alpha \mathcal{G}
\oplus (-\mathcal{H}_{1}) \oplus \ldots \oplus (-\mathcal{H}_{r}))$
is a BUNTF for $\mathbb{H}_{d} \oplus \mathbb{H}_{1,1} \oplus \ldots
\oplus \mathbb{H}_{1,r}$ if and only if $r=1$,
$\sum_{k=1}^{K}\overline{\beta_{k}}f_{k}=\sum_{k=1}^{K}\overline{\beta_{k}}g_{k}$,
$|\alpha|^{2}=\frac{d}{d+1}$ and $|\beta_{k}|^{2}=\frac{1}{d+1}$ for
each $k=1, \ldots, K$.
\end{thm}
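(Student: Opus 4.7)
The plan is to compute the frame operator $S_{\mathcal{U}\cup\mathcal{V}}=S_{\mathcal{U}}+S_{\mathcal{V}}$ explicitly in block form with respect to the decomposition $\mathbb{H}_d\oplus\mathbb{H}_{1,1}\oplus\cdots\oplus\mathbb{H}_{1,r}$, where $\mathcal{U}=(\alpha f_k,\beta_k h_1,\ldots,\beta_k h_r)_{k=1}^K$ and $\mathcal{V}=(\alpha g_k,-\beta_k h_1,\ldots,-\beta_k h_r)_{k=1}^K$, and to read off the conditions that make this operator a positive multiple of the identity. Each condition in the statement will correspond to the vanishing of a specific block or the matching of two diagonal coefficients.

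Before touching tightness, two ingredients are immediate. Balancedness of $\mathcal{U}\cup\mathcal{V}$ is automatic: the first block sums to $\alpha(\sum_k f_k+\sum_k g_k)=0$ because $\mathcal{F},\mathcal{G}$ are balanced, and each $l$-th block sums to $((\sum_k\beta_k)-(\sum_k\beta_k))h_l=0$. The unit-norm constraint on every element reduces to $|\alpha|^2+r|\beta_k|^2=1$, which forces $|\beta_k|^2$ to be a common constant, say $\beta^2$, with $|\alpha|^2=1-r\beta^2$. Note also that $\beta^2>0$, otherwise $\mathcal{U}\cup\mathcal{V}$ fails to span any of the $\mathbb{H}_{1,l}$ and cannot be a frame for the total space.

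A direct computation, using $S_{\mathcal{F}}=S_{\mathcal{G}}=\frac{K}{d}I$, shows that on $(f,c_1,\ldots,c_r)$ the first block of $S_{\mathcal{U}}+S_{\mathcal{V}}$ equals $2|\alpha|^2\frac{K}{d}f+\alpha\sum_l\langle c_l,h_l\rangle\sum_k\overline{\beta_k}(f_k-g_k)$, while the $l$-th block equals $\overline{\alpha}\sum_k\beta_k\langle f,f_k-g_k\rangle h_l+2K\beta^2\sum_{l'}\langle c_{l'},h_{l'}\rangle h_l$. The sign-flip on the $\mathcal{H}_l$-coordinates is what makes this identity useful: the $\mathbb{H}_d$–$\mathbb{H}_{1,l}$ cross-contributions from $\mathcal{U}$ and $\mathcal{V}$ do not cancel as $f_k+g_k$ but survive with a factor $f_k-g_k$, whereas the $\mathbb{H}_{1,l}$-diagonal contributions add with equal sign.

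For this operator to equal $\lambda I$, three independent block conditions must hold. First, the $(\mathbb{H}_{1,l},\mathbb{H}_{1,l'})$ cross block with $l\neq l'$ has coefficient $2K\beta^2>0$; hence no such cross block exists, which forces $r=1$. Second, with $r=1$, the $(\mathbb{H}_d,\mathbb{H}_{1,1})$ cross block vanishes if and only if $\sum_k\overline{\beta_k}(f_k-g_k)=0$, i.e.\ $\sum_k\overline{\beta_k}f_k=\sum_k\overline{\beta_k}g_k$. Third, matching the two diagonal coefficients gives $2|\alpha|^2\frac{K}{d}=2K\beta^2$, hence $|\alpha|^2=d\beta^2$; combined with $|\alpha|^2+\beta^2=1$ this yields $|\alpha|^2=\frac{d}{d+1}$ and $|\beta_k|^2=\frac{1}{d+1}$, and the common diagonal coefficient becomes $\frac{2K}{d+1}$, which is precisely the tightness constant predicted by Proposition~\ref{P propiedades de marcos}(1) for a UNTF of $2K$ elements in a space of dimension $d+1$. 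The converse direction is simply a substitution into the same block identity. The main obstacle is bookkeeping: one must carry the $\pm\beta_k$ sign through every cross-term and distinguish the combinations $f_k-g_k$ (which encode the nontrivial compatibility condition) from those that cancel; once the block formulas above are written down cleanly, both implications fall out simultaneously.
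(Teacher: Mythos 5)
Your proof is correct, and it follows exactly the route the paper intends: the paper omits the proof of this theorem, stating only that it ``uses arguments similar to the previous ones,'' namely the block computation of the frame operator $S$ on $\mathbb{H}_{d}\oplus\mathbb{H}_{1,1}\oplus\ldots\oplus\mathbb{H}_{1,r}$ carried out for the preceding lift construction. Your block formulas, the extraction of $r=1$ from the nonvanishing $(\mathbb{H}_{1,l},\mathbb{H}_{1,l'})$ coupling $2K\beta^{2}$, the condition $\sum_{k}\overline{\beta_{k}}(f_{k}-g_{k})=0$ from the cross block, and the matching of diagonal coefficients all check out.
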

Note that one choice for $\beta_{k}$ in Theorem~\ref{T hexahedron
dodecaedron} is $\beta_{k}=\sqrt{\frac{1}{d+1}}$ for each $k=1,
\ldots, K$.

The next method to construct UNTFs can be seen as a partial simple
lift.
\begin{prop}
Assume $\alpha, \beta \in \mathbb{F} \setminus \{0\}$ and
$Ld_{1}>K$. Let $\mathcal{F}=(f_{k})_{k=1}^{K},
\mathcal{G}=(g_{l})_{l=1}^{L}$ be a UNTF and a BUNTF for
$\mathbb{H}_{d_{1}}$, respectively. Let
$\mathcal{H}=(h_{l})_{l=1}^{L}$ where $h_{l}=h \in
\mathbb{H}_{d_{2}}$ and $||h||=1$ for each $l=1, \ldots,L$. Then
$((f_{k},0))_{k=1}^{K} \cup (\alpha \mathcal{G} \oplus \beta
\mathcal{H})$ is a UNTF for $\mathbb{H}_{d_{1}} \oplus
\mathbb{H}_{d_{2}}$ if and only if $d_{2}=1$,
$|\alpha|^{2}=\frac{d_{1}L-K}{(d_{1}+1)L}$ and
$|\beta|^{2}=\frac{K+L}{(d_{1}+1)L}$.
\end{prop}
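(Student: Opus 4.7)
The plan is to compute the frame operator of $\mathcal{K} := ((f_k,0))_{k=1}^K \cup (\alpha\mathcal{G} \oplus \beta\mathcal{H})$ directly and match it against the tight frame constant $\tfrac{K+L}{d_1+d_2}$. Unit norm immediately reduces to $\|(\alpha g_l,\beta h)\|^2 = |\alpha|^2+|\beta|^2 = 1$, since $\|(f_k,0)\|=\|f_k\|=1$ already; this will follow for free at the end.

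For the frame operator, for any $(f,g)\in\mathbb{H}_{d_1}\oplus\mathbb{H}_{d_2}$ I would expand
\[
S_{\mathcal{K}}(f,g) = \sum_{k=1}^K\langle f,f_k\rangle(f_k,0) + \sum_{l=1}^L\bigl(\overline{\alpha}\langle f,g_l\rangle+\overline{\beta}\langle g,h\rangle\bigr)(\alpha g_l,\beta h).
\]
The key simplification comes from $\mathcal{G}$ being balanced: both $\sum_l g_l$ and $\sum_l\langle f,g_l\rangle=\langle f,\sum_l g_l\rangle$ vanish, killing the cross terms between the two blocks. Combined with $S_{\mathcal{F}}=\tfrac{K}{d_1}I$ and $S_{\mathcal{G}}=\tfrac{L}{d_1}I$, this collapses to
\[
S_{\mathcal{K}}(f,g)=\left(\tfrac{K+|\alpha|^2 L}{d_1}\,f,\ |\beta|^2L\langle g,h\rangle h\right).
\]

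The UNTF requirement $S_{\mathcal{K}}(f,g)=\tfrac{K+L}{d_1+d_2}(f,g)$ for every $(f,g)$ then splits into two conditions. The second component of $S_{\mathcal{K}}(f,g)$ always lies in $\mathrm{span}\{h\}$, so the equality for arbitrary $g$ forces $\mathbb{H}_{d_2}=\mathrm{span}\{h\}$, i.e.\ $d_2=1$. With $d_2=1$, matching scalar coefficients yields $|\beta|^2=\tfrac{K+L}{(d_1+1)L}$ and $|\alpha|^2=\tfrac{d_1L-K}{(d_1+1)L}$; the latter is positive precisely because of the hypothesis $Ld_1>K$. A direct check gives $|\alpha|^2+|\beta|^2=1$, so the unit-norm condition comes along automatically. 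The converse follows by plugging these values back into the displayed formula for $S_{\mathcal{K}}$.

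The main obstacle, though modest, is the rank-one argument on the second component: one has to observe that the map $g\mapsto|\beta|^2L\langle g,h\rangle h$ has one-dimensional range, so tightness on $\mathbb{H}_{d_2}$ forces $d_2=1$ rather than just a scalar compatibility relation. After that the argument is routine coefficient-matching, and the hypothesis $Ld_1>K$ surfaces naturally as the positivity condition on $|\alpha|^2$.
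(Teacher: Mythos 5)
Your proof is correct and follows essentially the same route as the paper: the paper omits the proof of this proposition, stating that it is analogous to the preceding theorem, whose proof is exactly your computation — expand the frame operator of the union, use balancedness of $\mathcal{G}$ to kill the cross terms, observe that the second component has range $\textrm{span}\{h\}$ so tightness forces $d_{2}=1$, and then match coefficients (with $Ld_{1}>K$ giving positivity of $|\alpha|^{2}$ and $|\alpha|^{2}+|\beta|^{2}=1$ giving unit norm).
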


Now we consider a variation of the previous method for obtaining a
BUNTF. It can be seen as a symmetric partial simple lift.
\begin{thm}
Assume $\alpha, \beta \in \mathbb{F} \setminus \{0\}$ and
$K<2d_{1}L$. Let $\mathcal{F}=(f_{k})_{k=1}^{K}$,
$\mathcal{G}=(g_{l})_{l=1}^{L}$ and
$\mathcal{\widetilde{G}}=(\widetilde{g}_{l})_{l=1}^{L}$ be BUNTFs
for $\mathbb{H}_{d_{1}}$. Let $\mathcal{H}=(h_{l})_{l=1}^{L}$ where
$h_{l}=h \in \mathbb{H}_{d_{2}}$ and $||h||=1$ for each $l=1,
\ldots,L$. Then $((f_{k},0))_{k=1}^{K} \cup (\alpha \mathcal{G}
\oplus \beta \mathcal{H}) \cup (\alpha \widetilde{\mathcal{G}}
\oplus (-\beta \mathcal{H}))$ is a BUNTF for $\mathbb{H}_{d_{1}}
\oplus \mathbb{H}_{d_{2}}$ if and only if
$T_{\mathcal{G}}e=T_{\widetilde{\mathcal{G}}}e$, $d_{2}=1$,
$|\alpha|^{2}=\frac{2d_{1}L-K}{2(d_{1}+1)L}$ and
$|\beta|^{2}=\frac{2L+K}{2(d_{1}+1)L}$.
\end{thm}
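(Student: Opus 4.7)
The plan is to verify the three defining conditions of a BUNTF for the union sequence
\[
\mathcal{S} := ((f_{k},0))_{k=1}^{K} \cup (\alpha \mathcal{G} \oplus \beta \mathcal{H}) \cup (\alpha \widetilde{\mathcal{G}} \oplus (-\beta \mathcal{H}))
\]
in $\mathbb{H}_{d_{1}} \oplus \mathbb{H}_{d_{2}}$, namely unit norm, balancedness, and tightness, extracting from each the corresponding necessary and sufficient conditions on the parameters, much as in the proofs of Proposition~\ref{L inner direct sum} and Theorem~\ref{T hexahedron dodecaedron}. The unit-norm condition is immediate: the first block contributes vectors of norm $1$, while each vector in the other two blocks has norm $\sqrt{|\alpha|^{2}\|g_{l}\|^{2}+|\beta|^{2}\|h\|^{2}}=\sqrt{|\alpha|^{2}+|\beta|^{2}}$, so $\mathcal{S}$ is UN iff $|\alpha|^{2}+|\beta|^{2}=1$. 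Balancedness is also transparent: the $\beta h$ and $-\beta h$ contributions cancel in the second coordinate, while the first coordinate sums to $T_{\mathcal{F}}e+\alpha(T_{\mathcal{G}}e+T_{\widetilde{\mathcal{G}}}e)$, which vanishes automatically because all three frames are BUNTF.

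For tightness I will compute the frame operator of $\mathcal{S}$ by summing the three block contributions. Using that $\mathcal{F}, \mathcal{G}, \widetilde{\mathcal{G}}$ are BUNTF for $\mathbb{H}_{d_{1}}$, so $S_{\mathcal{F}}=\tfrac{K}{d_{1}}I$ and $S_{\mathcal{G}}=S_{\widetilde{\mathcal{G}}}=\tfrac{L}{d_{1}}I$, together with the vanishings $T_{\mathcal{G}}e=T_{\widetilde{\mathcal{G}}}e=0$ (which is the role played by the hypothesis $T_{\mathcal{G}}e=T_{\widetilde{\mathcal{G}}}e$ in the statement) to kill all cross-terms, I expect
\[
S_{\mathcal{S}}(f,g)=\left(\tfrac{K+2|\alpha|^{2}L}{d_{1}}f,\; 2L|\beta|^{2}\langle g,h\rangle h\right),\qquad (f,g)\in\mathbb{H}_{d_{1}}\oplus\mathbb{H}_{d_{2}}.
\]
Imposing $S_{\mathcal{S}}(f,g)=\lambda(f,g)$ for all $(f,g)$ yields two scalar equations: from the first coordinate $\lambda=\tfrac{K+2|\alpha|^{2}L}{d_{1}}$; from the second, $2L|\beta|^{2}\langle g,h\rangle h=\lambda g$ for every $g\in\mathbb{H}_{d_{2}}$. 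The second equation forces every vector of $\mathbb{H}_{d_{2}}$ to be a scalar multiple of $h$, hence $d_{2}=1$; then $\langle g,h\rangle h=g$ and $\lambda=2L|\beta|^{2}$.

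Combining $|\alpha|^{2}+|\beta|^{2}=1$ with $\tfrac{K+2|\alpha|^{2}L}{d_{1}}=2L|\beta|^{2}$ (or, equivalently, using $\lambda=\tfrac{K+2L}{d_{1}+d_{2}}=\tfrac{K+2L}{d_{1}+1}$ since $\mathcal{S}$ has $K+2L$ unit vectors in a space of dimension $d_{1}+1$) I solve to obtain $|\beta|^{2}=\tfrac{2L+K}{2(d_{1}+1)L}$ and $|\alpha|^{2}=1-|\beta|^{2}=\tfrac{2d_{1}L-K}{2(d_{1}+1)L}$. The strict inequality $K<2d_{1}L$ is precisely what guarantees $|\alpha|^{2}>0$, consistent with the hypothesis $\alpha\neq 0$. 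The converse direction is obtained by reading the same computation backwards: given the displayed values of $|\alpha|^{2},|\beta|^{2}$ and $d_{2}=1$, the three checks (unit norm, balanced, tight with constant $\lambda=\tfrac{K+2L}{d_{1}+1}$) go through.

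The main bookkeeping obstacle is the frame-operator computation: one must correctly expand the inner products on $\mathbb{H}_{d_{1}}\oplus\mathbb{H}_{d_{2}}$, keep track of the cross-terms $\alpha\overline{\beta}\langle g,h\rangle(T_{\mathcal{G}}e-T_{\widetilde{\mathcal{G}}}e)$ in the first coordinate and $\beta\overline{\alpha}\langle f,T_{\mathcal{G}}e-T_{\widetilde{\mathcal{G}}}e\rangle h$ in the second, and invoke $T_{\mathcal{G}}e=T_{\widetilde{\mathcal{G}}}e=0$ at exactly the right moment so that the second-coordinate equation collapses to $2L|\beta|^{2}\langle g,h\rangle h=\lambda g$ and forces $d_{2}=1$. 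Once this is done cleanly, the remaining parameter determination is a short linear computation, completely analogous to the omitted proofs that preceded this theorem.
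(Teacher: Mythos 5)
Your proof is correct and follows exactly the approach the paper uses for the analogous construction theorems in Section 8 (the paper itself omits this particular proof as "similar to the previous ones"): compute the frame operator of the union, observe that the cross-terms carry the factor $T_{\mathcal{G}}e-T_{\widetilde{\mathcal{G}}}e$, force $d_{2}=1$ from the rank-one second component, and solve the two scalar equations coming from unit norm and the tight constant $\frac{K+2L}{d_{1}+1}$. The frame-operator expression, the role of $K<2d_{1}L$, and the parameter values all check out.
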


The following theorem generalizes \cite[Theorem 3]{Bajnok (1991)},
which is about $t$-designs in $\mathbb{R}^{3}$, for the case $t=2$.
\begin{thm}\label{T BUNTF de t design R3}
Assume $\alpha_{m}, \beta_{m} \in \mathbb{F} \setminus \{0\}$ such
that $|\alpha_{m}|^{2} + |\beta_{m}|^{2}=1$ for each $m = 1, \ldots,
M$. Let $\mathcal{F}_{m}=(f_{m,k})_{k=1}^{K}$ be BUNTFs for
$\mathbb{H}_{d_{1}}$ for each $m = 1, \ldots, M$. Let
$\mathcal{G}=(g_{k})_{k=1}^{K}$ where $g_{k}=g \in
\mathbb{H}_{d_{2}}$ and $||g||=1$ for each $k=1, \ldots, K$. Then
$\bigcup_{m = 1}^{M} (\alpha_{m}\mathcal{F}_{m} \oplus
\beta_{m}\mathcal{G})$ is a BUNTF for $\mathbb{H}_{d_{1}} \oplus
\mathbb{H}_{d_{2}}$ if and only if $d_{2}=1$, $\sum_{m =
1}^{M}\beta_{m}=0$ and $\sum_{m =
1}^{M}|\beta_{m}|^{2}=\frac{|M|}{d_{1}+1}$.
\end{thm}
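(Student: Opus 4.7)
The plan is to verify the three defining properties (unit norm, balanced, tight) directly from the definitions, exploiting the fact that the balancedness of each $\mathcal{F}_m$ will kill all the ``cross terms'' between the two summand spaces in the computation of the frame operator.

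First I would observe that for any $m,k$,
\[
\|(\alpha_{m}f_{m,k},\beta_{m}g)\|^{2}=|\alpha_{m}|^{2}\|f_{m,k}\|^{2}+|\beta_{m}|^{2}\|g\|^{2}=|\alpha_{m}|^{2}+|\beta_{m}|^{2}=1,
\]
so the unit norm condition for $\bigcup_{m}(\alpha_{m}\mathcal{F}_{m}\oplus\beta_{m}\mathcal{G})$ is automatic. For balancedness, since $\sum_{k=1}^{K}f_{m,k}=0$ for each $m$ (each $\mathcal{F}_m$ is balanced),
\[
\sum_{m=1}^{M}\sum_{k=1}^{K}(\alpha_{m}f_{m,k},\beta_{m}g)=\Bigl(\sum_{m=1}^{M}\alpha_{m}\sum_{k=1}^{K}f_{m,k},\,K\sum_{m=1}^{M}\beta_{m}\,g\Bigr)=\Bigl(0,\,K\Bigl(\sum_{m=1}^{M}\beta_{m}\Bigr)g\Bigr),
\]
which vanishes iff $\sum_{m}\beta_{m}=0$ (using $\|g\|=1$).

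Next I would compute the frame operator $S$ applied to $(f,g')\in\mathbb{H}_{d_{1}}\oplus\mathbb{H}_{d_{2}}$. Expanding $\langle(f,g'),(\alpha_{m}f_{m,k},\beta_{m}g)\rangle=\overline{\alpha_{m}}\langle f,f_{m,k}\rangle+\overline{\beta_{m}}\langle g',g\rangle$ and summing, the first coordinate reads
\[
\sum_{m}|\alpha_{m}|^{2}\sum_{k}\langle f,f_{m,k}\rangle f_{m,k}+\sum_{m}\alpha_{m}\overline{\beta_{m}}\langle g',g\rangle\sum_{k}f_{m,k}.
\]
The second sum vanishes by balancedness of each $\mathcal{F}_m$, and the tight frame property $S_{\mathcal{F}_{m}}=(K/d_{1})I$ collapses the first to $(K/d_{1})(\sum_{m}|\alpha_{m}|^{2})f$. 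Symmetrically, the second coordinate becomes
\[
\Bigl(\sum_{m}\beta_{m}\overline{\alpha_{m}}\Bigl\langle f,\sum_{k}f_{m,k}\Bigr\rangle\Bigr)g+K\Bigl(\sum_{m}|\beta_{m}|^{2}\Bigr)\langle g',g\rangle g=K\Bigl(\sum_{m}|\beta_{m}|^{2}\Bigr)\langle g',g\rangle g,
\]
again using $\sum_{k}f_{m,k}=0$. So the frame operator is block-diagonal.

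Finally, to force $S=\lambda I_{\mathbb{H}_{d_{1}}\oplus\mathbb{H}_{d_{2}}}$ with $\lambda=MK/(d_{1}+d_{2})$ (the tightness constant forced by having $MK$ unit vectors in dimension $d_{1}+d_{2}$), I note that the identity $\lambda g'=K(\sum_{m}|\beta_{m}|^{2})\langle g',g\rangle g$ must hold for all $g'\in\mathbb{H}_{d_{2}}$. Taking $g'\perp g$ and using $\lambda>0$ forces $g$ to span $\mathbb{H}_{d_{2}}$, i.e.\ $d_{2}=1$; then $\langle g',g\rangle g=g'$, and the scalar condition becomes $K\sum_{m}|\beta_{m}|^{2}=MK/(d_{1}+1)$, giving $\sum_{m}|\beta_{m}|^{2}=M/(d_{1}+1)$. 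The first-coordinate equation $(K/d_{1})\sum_{m}|\alpha_{m}|^{2}=MK/(d_{1}+1)$ is then automatic from $\sum_{m}(|\alpha_{m}|^{2}+|\beta_{m}|^{2})=M$. Conversely, under the three stated conditions, the above computations immediately give UN, balanced and $\lambda I$, finishing the equivalence. There is no real obstacle: the only subtle point is recognizing that balancedness of each $\mathcal{F}_m$ is precisely what decouples the two coordinates in $S$ and lets the argument reduce to a scalar equation forcing $d_{2}=1$.
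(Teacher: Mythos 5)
Your proof is correct and follows essentially the same route the paper intends: the paper omits the proof of this theorem, stating only that it uses ``arguments similar to the previous ones,'' and the previous proof (for $(\alpha\mathcal{F}\oplus\beta\mathcal{G})\cup(0,y)$) is exactly your computation --- balancedness killing the cross terms in the block form of the frame operator, the second-coordinate equation forcing $d_{2}=1$, and the trace-determined tightness constant $MK/(d_{1}+1)$ fixing $\sum_{m}|\beta_{m}|^{2}$. Your write-up supplies the details the paper leaves out, with no gaps.
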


\begin{rem}
An example of scalars $\beta_{m}$ as in Theorem~\ref{T BUNTF de t
design R3} is
$\beta_{m}=\sqrt{\frac{|M|}{c(d_{1}+1)}}\widetilde{\beta}_{m}$ where
$\widetilde{\beta}_{m} \in \mathbb{F}$, $\frac{1}{|M|} \leq
|\widetilde{\beta}_{m}|^{2} \leq \frac{d_{1}+1}{|M|}$ for each $m=1,
\ldots, M$, $\sum_{m = 1}^{M}\widetilde{\beta}_{m}=0$ and $c=\sum_{m
= 1}^{M}|\widetilde{\beta}_{m}|^{2}$. Another option is to consider
any row of $T_{\mathcal{F}}$ where $\mathcal{F}$ is a BTF for
$\mathbb{F}^{d_{1}+1}$ with $M$ elements.
\end{rem}

\begin{rem}
Observe that we can vary $\mathcal{F}$, $\mathcal{G}$,
$\mathcal{H}$, etc., in all the above constructions obtaining in
this manner an infinite number of non unitary equivalent BUNTFs. We
can also combine these methods generating a great variety of them.
\end{rem}

We note that there exist BUNTFs of $K$ points for $\mathbb{R}^{d}$
with $K \geq 2$ unless $K \leq d$ or $K = d + 2$ and $K$ is odd.
This is a consequence of Proposition~\ref{P Proposition 6.1 Waldron}
and results of \cite{Mimura (1990)}.

\section*{Acknowledgement}
This research has been supported by Grants PIP 112-201501-00589-CO
(CONICET), PROICO 03-1618 (UNSL), PICT-2014-1480 and UBACyT
20020130100422BA. We thank the anonymous referee for valuable
comments that helped to improve the paper.

%----------------------------------------------------------------
%\bibliographystyle{amsplain}

\end{document}